\numberwithin{equation}{section}
\theoremstyle{definition}
\newtheorem{definition}{Definition}[section]
\theoremstyle{plain}
\newtheorem{theorem}[definition]{Theorem}
\newtheorem{lemma}[definition]{Lemma}
\newtheorem{corollary}[definition]{Corollary}
\theoremstyle{remark}
\newtheorem{remark}[definition]{Remark}
\pgfplotsset{compat=newest}
\pgfplotsset{plot coordinates/math parser=false}
\newlength\figureheight
\newlength\figurewidth
\newcommand{\drawarcdelta}[4]{
  \draw ($#1+(#2:#4)$) arc[start angle=#2, delta angle=#3, radius=#4];
}
\newcommand{\drawarcdeltadotted}[4]{
  \draw [black,thick, dotted]($#1+(#2:#4)$) arc[start angle=#2, delta angle=#3, radius=#4];
}
\newcommand{\drawlabeledarcdelta}[6]{
  \drawarcdelta{#1}{#2}{#3}{#4}
  \node at ($#1+(#2+#3/2:#6)$) {#5};
}
\newcommand{\Bo}{\mathcal{B}}
\newcommand{\C}{\mathbb{C}}
\newcommand{\N}{\mathbb{N}}
\newcommand{\R}{\mathbb{R}}
\newcommand{\D}{\mathbb{D}}
\newcommand{\T}{\mathbb{T}}
\newcommand{\veps}{\varepsilon}
\begin{document}

\title{Functional calculus estimates for {T}admor--{R}itt operators}



\author{Felix L.~Schwenninger}

\address{Felix L.~Schwenninger \\
              University of Twente, \\
              P.O.~Box 217, 7500 AE Enschede\\ 
	      The Netherlands\\
              Tel.: +31-53-4892230}
              \email{f.l.schwenninger@utwente.nl}           

\thanks{The author has been supported by the Netherlands Organisation for Scientific Research (NWO), grant no.~613.001.004. 
}


\begin{abstract}\normalsize
We show $H^{\infty}$-functional calculus estimates for Tadmor--Ritt operators (also known as Ritt operators), which  generalize and improve results by Vitse. These estimates are in conformity with the best known power-bounds for Tadmor--Ritt operators in terms of the constant dependence. 
Furthermore, it is shown how discrete square function estimates influence the estimates.

\end{abstract}
\subjclass[2010]{47A60, 47A99, 65M12}

\keywords{Functional calculus; Tadmor--Ritt operator; Ritt operator; Square function estimates; Power-bounded operator; Kreiss Matrix Theorem}

\maketitle


%


\section{Introduction}

When studying  numerical stability of a difference equation of the form 
\begin{equation}\label{eq1}
	x_{n}=Tx_{n-1}+r_{n},\quad n>0,
\end{equation}
the notion of \textit{power-boundedness} emerges naturally.
Here, $T$ is a square matrix or, more general, a linear operator and stability w.r.t.\ to the initial value $x_{0}$ can be measured by
\begin{equation*}
	E=\sup\nolimits_{n\in\N}\|x_{n}-\tilde{x}_{n}\|,
\end{equation*}
where $\tilde{x}_{n}$ denotes the solution to \eqref{eq1} with initial value $\tilde{x}_{0}$. Since $\|x_{n}-\tilde{x}_{n}\|=\|T^{n}(x_{0}-\tilde{x}_{0})\|$, the question whether $E<\infty$ (for all $x_{0}$, $\tilde{x}_{0}$) reduces to asking if $\sup_{n}\|T^{n}\|<\infty$.

Although the characterization of power-bounded matrices in terms of the eigenvalues is well-known, one aims for different conditions implying power-boundedness, like conditions on the resolvent $(zI-T)^{-1}$. The most famous characterization for matrices is probably given by the \textit{Kreiss Matrix Theorem} \cite{Kreiss62,Spijker91}. \newline
As the Kreiss Matrix Theorem fails for infinite dimensions, one has to strengthen the conditions on the resolvent in order to guarantee power-boundedness. This leads to the notion of \textit{Tadmor--Ritt operators}.\newline
This paper deals with general estimates for Tadmor--Ritt operators, which particularly imply power-boundedness.
\medskip

In the following, let $\D$ denote the open unit disc in the complex plane, $\overline{\D}$ its closure and $\partial\D$ its boundary. 
The spectrum of a linear (bounded) operator $T:X\rightarrow X$ on a Banach space $X$ will be denoted by $\sigma(T)$ and the resolvent set by $\rho(T)$.  
For $z\in\rho(T)$, we define the resolvent $R(z,T):=(zI-T)^{-1}$. In the following, all considered operators $T$ are assumed to be linear and bounded.\newline 
For an open set $\Omega\subset\C$, $H^{\infty}(\Omega)$ denotes the space of bounded analytic functions on $\Omega$, equipped with the supremum norm $\|\cdot \|_{\infty,\Omega}$. 

\subsection{Tadmor--Ritt and Kreiss operators}
In the following we will give a brief introduction about Tadmor--Ritt operators, and explain their relation to more general Kreiss operators. Unless stated otherwise, $X$ will denote a general Banach space.
\begin{definition}
An operator $T$ on $X$ is called a \textit{Tadmor--Ritt} operator if $\sigma(T)\subset \overline{\D}$ and if
\begin{equation}\label{eq:TadmorRitt}
	C(T):= \sup_{|z|>1} \| (z-1) R(z,T) \| < \infty.
\end{equation}
Let $TR(X)$ denote the set of all Tadmor--Ritt operators on $X$.
\end{definition}
Tadmor--Ritt operators, in the literature also sometimes referred to as  \textit{Ritt operators}, were, with a slightly different but equivalent definition, first studied in \cite{Ritt53}.
See \cite{Bakaev03,BoroDrissiSpijker,Ransford02,Vitse04Turndown} for a detailed discussion of these two definitions.
Tadmor--Ritt operators form a class consisting of operators satisfying  \textit{Kreiss' resolvent condition},
\begin{equation} \label{eq:Kreisscondition}
\sigma(T)\subset\overline{\D},\quad\text{and}\quad C_{Kreiss}(T)=\sup_{|z|>1}\| (|z|-1) R(z,T) \|< \infty.
\end{equation}
We will call operators satisfying (\ref{eq:Kreisscondition}) \textit{Kreiss operators} and denote the set of all such operators on $X$ by $KR(X)$. 
Obviously, $TR(X)\subset KR(X)$.
The most prominent question related to these operators is the one of \textit{power-boundedness}, i.e.\ whether 
\begin{equation*}
Pb(T):=\sup_{n\in\N}\| T^{n}\| <\infty.
\end{equation*}
In 1962, O.~Kreiss studied the question for finite-dimensional spaces $X$, \cite{Kreiss62}.
He showed that  in this case the answer is positive and that for all $T\in KR(X)$,
\begin{equation}\label{eq:generalKMT}
	Pb(T) \leq g(C_{Kreiss}(T),N),
\end{equation}
for a function $g$ depending on $C_{Kreiss}(T)$ and the dimension $N$ of the space $X$. 
Kreiss' originial estimate (of the function $g$) was improved steadily in the following decades ending up with the final result proved by Spijker in 1991, \cite{Spijker91},
\begin{equation}\label{eq:SpijkerKMT}
 \forall T\in KR(X):\quad 	Pb(T) \leq e C_{Kreiss}(T) N.
\end{equation}
For the detailed history of the result we refer to the monograph \cite{TrefethenEmbree05} and the recent work \cite{Nikolski14}. 
By \cite{LeVequeTrefethen84}, estimate (\ref{eq:SpijkerKMT}) is sharp in the sense that there exists a sequence of matrices $T_{N}\in KR(\C^{N\times N})$ such that 
\begin{equation*}
	\lim_{N\to\infty}\frac{Pb(T_{N})}{C_{Kreiss}(T_{N})N}=e.
\end{equation*}
However, for this sequence, $C_{Kreiss}(T_{N})\to\infty$, hence, for $C_{Kreiss}(T)\leq C$ with a fixed constant $C$, the behavior could theoretically be better.
Indeed, a recent result by Nikolski shows that for $T$ having \textit{unimodular} spectrum, i.e.\ $\sigma(T)\subset\partial\D$, and a basis of eigenvectors, one gets a \textit{sublinear} growth in the dimension.

\begin{theorem}[N.~Nikolski 2013 \cite{Nikolski14}]\label{thmNikolski}
Let $X$ be a Hilbert space of dimension $N<\infty$. Let $T$ be a Kreiss operator on $X$ such that $\sigma(T)\subset\partial\D$ and such that $T$ has a basis of eigenvectors $\mathcal{X}_{N}=(x_{j})_{j=1}^{N}$. Then
\begin{equation*}
Pb(T) \leq 2\pi C_{Kreiss}(T)N^{1-\veps},
\end{equation*}
where $\veps=\frac{0.32}{b(\mathcal{X}_{N})^{2}}$ and $b(\mathcal{X}_{N})$ denotes the basis constant of $\mathcal{X}_{N}$, i.e.
\begin{equation*}
b(\mathcal{X}_{N})=\sup_{k\leq l}\| P_{(x_{j})_{j=l}^{k}} \|,
\end{equation*}
where $P_{(x_{j})_{j=l}^{k}}$ denotes the projection onto the span of the vectors $(x_{j})_{j=l}^{k}$.
\end{theorem}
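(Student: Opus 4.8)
The plan is to diagonalise $T$, translate the power bound into an estimate for a ``Schauder multiplier'' built from the spectral projections, and then extract from the Kreiss condition a quantitative anti-clustering statement for the eigenvalues along $\partial\D$. Write $\sigma(T)=\{\lambda_1,\dots,\lambda_N\}\subset\partial\D$ with $\lambda_j=e^{i\theta_j}$, ordering the arguments $0\le\theta_1\le\dots\le\theta_N<2\pi$ compatibly with the numbering of the eigenbasis $\mathcal{X}_N=(x_j)_{j=1}^N$. Let $P_j$ be the rank-one spectral projection onto $\C x_j$ along the remaining eigenvectors, $S_k:=\sum_{j\le k}P_j$, and $Q_{[k,l]}:=S_l-S_{k-1}$; by the definition of $b:=b(\mathcal{X}_N)$ one has $\|Q_{[k,l]}\|\le b$ for all $k\le l$. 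Since each $x_j$ is an eigenvector, $T^n=\sum_j\lambda_j^n P_j$ for every $n$; every $Q_{[k,l]}$ commutes with $T$; and the span of any consecutive block of eigenvectors is $T$-invariant, so the restriction $T_{[k,l]}$ of $T$ to it has spectrum $\{\lambda_k,\dots,\lambda_l\}\subset\partial\D$ and, since $R(z,T)$ leaves each eigenspace invariant, $C_{Kreiss}(T_{[k,l]})\le C_{Kreiss}(T)$. (Note that a Cauchy-integral approach over $|z|=r>1$ is awkward here, since the Kreiss condition only controls $R(z,T)$ outside $\overline\D$ while the spectrum sits on $\partial\D$; this is why one works with the diagonalisation.)

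Abel summation gives $T^n=\sum_{k=1}^N(\lambda_k^n-\lambda_{k+1}^n)S_k$ with $\lambda_{N+1}:=0$, hence $\|T^n\|\le b\bigl(1+\sum_k|\lambda_{k+1}^n-\lambda_k^n|\bigr)\le b(1+2N)$ --- this reproves power-boundedness but only at the linear Spijker rate, so the task is to improve it for small $b$. Fix a number $m$ of blocks to be chosen later, cut $\partial\D$ into $m$ consecutive arcs, let $B_1,\dots,B_m$ be the induced partition of $\{1,\dots,N\}$ and $Q_r:=Q_{B_r}$, so $\sum_r Q_r=I$, $\|Q_r\|\le b$, and $T^n=\sum_r (T_{B_r})^n Q_r$. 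Replace $z^n$ on the $r$-th arc by its value $c_r$ at the arc's midpoint: then $T^n=\sum_r c_r Q_r+\sum_r E_r$ with $E_r=\sum_{j\in B_r}(\lambda_j^n-c_r)P_j$. For the first sum, $\|\sum_r c_r Q_r\|\le b\sum_r|c_r-c_{r+1}|=O(b\cdot m)$, so one wants short arcs; for the error terms, Spijker's theorem \eqref{eq:SpijkerKMT} applied to $T_{B_r}$ (using $C_{Kreiss}(T_{B_r})\le C_{Kreiss}(T)$) together with $\|E_r\|\le\|(T_{B_r})^n\|+|c_r|\,\|Q_r\|$ give $\|E_r\|\lesssim C_{Kreiss}(T)\,|B_r|+b$, so one wants few, hence large, blocks.

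The naive combination of these two bounds is $O(b\,m)+O(C_{Kreiss}(T)N)$ --- no gain, because it ignores that the $|B_r|$ cannot all be large when the arcs are short. This is exactly where the Kreiss condition must be used sharply: testing $\|R(z,T)\|\le C_{Kreiss}(T)/(|z|-1)$ at $z=(1+\delta)\omega$ as $\omega$ runs over $\partial\D$ and $\delta\downarrow 0$, and pairing the resolvent against suitable combinations of the $x_j$ and of the biorthogonal dual basis, yields an anti-clustering estimate: the number of eigenvalues, weighted through the projections, in an arc of length $\ell$ is $O(C_{Kreiss}(T)/\ell)$ up to factors of $b$, and it is from this step that the constant $2\pi$ emerges. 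Feeding the anti-clustering bound back in, one chooses the $m$ arcs so that each $|B_r|\approx N/m$ \emph{and} each arc is not too long, which turns the $\|E_r\|$ bound sublinear in $|B_r|$; summing over $r$ and optimising $m\asymp N^{\theta}$ to balance the $b\,m$ term against the improved error term produces $\|T^n\|\le 2\pi\,C_{Kreiss}(T)\,N^{1-\varepsilon}$, with $\varepsilon$ a concrete decreasing function of $b$. Carefully tracking the numerology through the anti-clustering/Bernstein step gives $\varepsilon=0.32/b(\mathcal{X}_N)^2$, the basis constant entering once through the block projections and once through the duality argument, hence the square.

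The main obstacle is precisely that localisation step. The bound $\|R(z,T)\|\le C_{Kreiss}(T)/(|z|-1)$ concerns the full resolvent, and because the projections $P_j$ are not orthogonal one cannot simply read off $\|P_j\|/\operatorname{dist}(z,\lambda_j)$ from it; extracting a genuinely local, $n$-uniform consequence requires a careful cancellation/duality argument, and it is this argument that is responsible for $b(\mathcal{X}_N)$ --- and only $b(\mathcal{X}_N)$, rather than the full condition number of $\mathcal{X}_N$ --- appearing in the exponent. Once the anti-clustering estimate is available, the remaining ingredients (the Abel summation, the application of \eqref{eq:SpijkerKMT} to the blocks, and the optimisation over $m$) are routine.
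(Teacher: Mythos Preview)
The paper does not give its own proof of this theorem; it is quoted from Nikolski's work with the single remark that ``the proof of Theorem~\ref{thmNikolski} is based on a classic theorem by McCarthy and Schwartz~\cite{McCarthySchwartz65}.'' Your proposal takes an entirely different route and never invokes the McCarthy--Schwartz inequality, so even at the level of strategy it diverges from what the paper points to.

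There is a genuine gap, and it is exactly the step you yourself flag as ``the main obstacle'': the anti-clustering estimate. As formulated it is false. For a normal matrix with unimodular spectrum one has $C_{Kreiss}(T)=1$ and $b(\mathcal{X}_N)=1$ no matter how the eigenvalues are distributed on $\partial\D$; all $N$ eigenvalues may lie in an arc of arbitrarily small length $\ell$, so no bound of the shape ``number of eigenvalues in an arc of length $\ell$ is $O(C_{Kreiss}(T)/\ell)$'' can follow from the Kreiss condition, even when ``weighted through the projections'' (here $\|P_j\|=1$ and $\|Q_{[k,l]}\|=1$). The resolvent $R(z,T)=\sum_j P_j/(z-\lambda_j)$ sees the projections and the eigenvalue positions jointly, and in the orthogonal case the $P_j$ cancel perfectly in norm, so testing at $z=(1+\delta)\omega$ yields no separation information. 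Since your block-Spijker optimisation rests on this unproved input, the argument does not close, and nothing in the sketch produces the specific exponent $0.32/b(\mathcal{X}_N)^{2}$.

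The route indicated by the paper bypasses eigenvalue localisation altogether. The McCarthy--Schwartz theorem is a Hilbert-space inequality for finite Boolean algebras of projections: given idempotents $P_1,\dots,P_N$ summing to $I$ with basis constant $b$, it bounds the unconditional (multiplier) constant $\sup_{|\alpha_j|\le1}\bigl\|\sum_j\alpha_j P_j\bigr\|$ by a quantity of order $N^{1-c/b^{2}}$, with no reference to any scalars $\lambda_j$ attached to the $P_j$. Since $T^{n}=\sum_j\lambda_j^{n}P_j$ with $|\lambda_j^{n}|=1$, the sublinear dependence on $N$ follows directly; the positions of the $\lambda_j$ on $\partial\D$ never enter. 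The constant $0.32$ and the dependence $b(\mathcal{X}_N)^{-2}$ in the exponent come from the McCarthy--Schwartz argument itself (a dyadic decomposition exploiting the Hilbert-space type/cotype~$2$ structure), not from any resolvent-based anti-clustering. This is the missing idea.
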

The proof of Theorem \ref{thmNikolski} is based on a classic theorem by McCarthy and Schwartz \cite{McCarthySchwartz65}.
We remark that Nikolski also shows a corresponding result on more general Banach spaces using a generalization of McCarthy and Schwartz' result by Gurari and Gurari \cite{Gurarii71}.
By using well-known techniques from Spijker, Tracogna, Welfert \cite{SpijkerTracognaWelfert03}, he further proves that the sublinear behavior is sharp. 
As indicated by Nikolski, in order to get an estimate in the spirit of the Kreiss Matrix Theorem, one has to close the loop by estimating $b(\mathcal{X}_{N})$ in terms of $C_{Kreiss}(T)$. This still remains open.

If we turn to general infinite-dimensional spaces $X$, the power-boundedness of general Kreiss operators, even on Hilbert spaces, is no longer true. 
We refer to \cite{Foguel64} and \cite{Halmos64} for counterexamples. 
In the conference paper \cite{TadmorLAA}, E.~Tadmor states that the growth of $\|T^{n}\|$ can at most be logarithmically in $n$ 
under the additional assumption that the spectrum of $T$ `is not too dense in the neighbourhood of the unit circle'.  This condition is in particular ensured if (\ref{eq:TadmorRitt}) holds. 
Moreover, the existence of an example is stated confirming the sharpness of the growth. 
As both the proof and the example are unfortunately not published, we are indebted to E.~Tadmor for sharing them with us, \cite{Tadmor2014private}. \\
Knowing that general Kreiss operators are not power-bounded, the same question for Tadmor--Ritt operators remained open until 1999 when  Lyubich, \cite{Lyubich99}, and Nagy $\&$ Zemanek, \cite{NagyZemanek99} 
used a preceding result of O.~Nevanlinna, \cite{Nevanlinna93}, to prove that they are indeed power-bounded. 
We remark that in 1993, C.~Palencia \cite{Palencia93} and, independently, Crouzeix, Larsson, Piskarev and Thom\'{e}e \cite{Crouzeixetal} showed that the \textit{Crank--Nicolson-scheme} is stable for sectorial operators. In particular, this shows that the \textit{Cayley transform} $Cay(A):=(I-A)(A+I)^{-1}$ of a sectorial operator $A$ is power-bounded. As it well-known that the mapping $A\mapsto Cay(A)$ establishes a one-to-one correspondence between sectorial operators $A$ with $0\in\rho(A)$ and Tadmor--Ritt operators, the result already shows the power-boundedness of Tadmor--Ritt operators. This fact seems to be unnoticed in the literature. Moreover, Palencia's result shows that any bounded operator $S$ with $\sigma(S)\subset\overline{\D}$ and such that there exists a constant $M(S)>0$ and
\begin{equation*}
	\|R(z,S)\| \leq  M(S)( |z+1|^{-1} + |z-1|^{-1}) , \quad |z|>1,
\end{equation*}
is power-bounded. Note that Tadmor--Ritt operators are of this form.

In 2002, El-Fallah and Ransford, \cite{Ransford02} showed that for a Tadmor--Ritt operator $T$, $Pb(T)\leq C(T)^{2}$,  which was subsequently improved  by Bakaev \cite{Bakaev03} to
\begin{equation}\label{eq:Bakaev}
	\forall T\in TR(X):\quad 	Pb(T) \leq aC(T)\log(aC(T)),
\end{equation}
 for some absolute constant $a>0$ (which was not determined). The latter result seems to be not so well-known. 
In \cite[Remark 2.2]{Vitse2005b} an alternative proof for the quadratic dependence on $C(T)$ is sketched. 
 A careful study of this sketch reveals  that it is based on a similar approach as in Bakaev's proof, which, with a sharper estimation and some additional work, actually yields  (\ref{eq:Bakaev}).
We will encounter a similar approach in the proof of Theorem \ref{thm1}, which was actually motivated by a result of the author for analytic semigroups, \cite{Schwenninger15a}.
\medskip

In \cite{Vitse04,Vitse2005b} Vitse investigated the more general setting of a functional calculus for Tadmor--Ritt operators and proved that for $1\leq m\leq n$ and any polynomial $p(z)=\sum_{j=m}^{n}a_{j}z^{j}$
\begin{equation}\label{eq:Vitse1}
		\| p(T) \| \leq c(C(T),m,n)\cdot\sup_{z\in\D}|p(z)|,
\end{equation}
with $c(C(T),m,n)= 191 C(T)^{5} \log\left(\frac{e(n+1)}{m}\right)$. 
We also remark that Le Merdy showed in \cite{LeMerdy98} that a Tadmor--Ritt operator on a Hilbert space has bounded polynomially calculus, i.e.\
\begin{equation}\label{eq:Polybdd}
\sup\left\{\|p(T)\|:p \text{ is polynomial},\|p\|_{\infty,\D}\leq1\right\}<\infty,
\end{equation}
 if and only if $T$ is similar to a contraction. 
Obviously, (\ref{eq:Vitse1})  implies power\allowbreak-bounded\-ness of $T$, however, yet with a $C(T)$-dependence worse than in (\ref{eq:Bakaev}). \newline
By functional calculus, more general functions $f$ in $H^{\infty}(\D)$ can be considered instead of polynomials $p$ in \eqref{eq:Vitse1}. This leads to the study of the \textit{$H^{\infty}$-calculus} for Tadmor--Ritt operators \cite{ArhancetFacklerLeMerdy2015,ArhancetLeMerdy2014,LancienLeMerdy2013}.
\medskip

We will show that the constant $c(C(T),m,n)$ in \eqref{eq:Vitse1} can be improved significantly, coupling it to the, so-far known, optimal constant for the power-bound of $T$ in (\ref{eq:Bakaev}).
Precisely, in Theorem \ref{thm2} we will show that for $p(z)=\sum_{j=m}^{n}a_{j}z^{j}$, $0\leq m\leq n$,
\begin{equation}\label{eq:mainresult}
	\|p(T)\| \leq aC(T)\log\left(C(T)+b+\log\frac{n+1}{m+1}\right) \cdot \|p\|_{\infty,\D}, 
\end{equation}
with absolute constants $a,b>0$. The proof is shorter and more direct than the one for (\ref{eq:Vitse1}) in \cite{Vitse05}. Note also that we allow for $m=0$.\newline
Moreover, the result is actually a consequence of a more general functional calculus result for Tadmor--Ritt operators, 
see Theorem \ref{thm1}. 

Finally, motivated by the result for analytic semigroup generators (i.e.\ sectorial operators) \cite{Schwenninger15a}, which can be seen as the continuous counterparts of Tadmor--Ritt operators,
we discuss the influence of \textit{square function estimates} on the the calculus estimates, see also \cite{LeMerdy2014a}. For Hilbert spaces, it is known that if a Tadmor--Ritt operator and its dual operator satisfy square function estimates, then the corresponding $H^{\infty}$-functional calculus is bounded. As for the more known continuous counterpart of sectorial operators, here, it is essential to have square function estimates for both $T$ and $T^{*}$. We show that having only $T$ (or alternatively $T^{*}$) satisfying square function estimates however improves the functional calculus estimate \eqref{eq:mainresult}, see Theorem \ref{thm3}. 
In Section \ref{subsec:GeneralSQFE}, we generalize the result about square function estimates to general Banach spaces. This involves a refined definition of square function estimates using Rademacher means and \textit{$R$-boundedness}. These abstract square function estimates are the discrete counterpart to the ones for sectorial operators, which were introduced by Kalton and Weis \cite{KaltonWeisUnpublished} and have proved very useful in the study of $L^{p}$-maximal regularity for parabolic evolution equations since then. \newline
In Section \ref{subsec:SharpnessEstimates}, we discuss sharpness of the derived estimates. We conclude by a result about a Besov-space calculus for Tadmor--Ritt operators, which is a refinement of \cite[Theorem 2.5]{Vitse2005b}.
\subsection{Properties of Tadmor--Ritt operators}\label{sec:PropTR}
Unless stated otherwise, $X$ will always denote a, in general infinite-dimensional, Banach space.\newline
From (\ref{eq:TadmorRitt}) it follows that for Tadmor--Ritt operators the  only possible spectral  point on $\T$ is $1$. 
Moreover, it is well-known that the spectrum is contained in  the \textit{Stolz type} domain $\mathcal{B}_{\theta}$, which is the interior of the convex hull of 
$\left\{\left\{1\right\},B_{\sin\theta}(0)\right\}$ for some $\theta\in(0,\frac{\pi}{2})$, see Figure \ref{fig1}. 
Here, $B_{r}(z_{0})$ denotes the open ball  centred at $z_{0}$ with radius $r$. 
For this and a proof of the following lemma we refer to Vitse \cite{Vitse04Turndown,Vitse2005b} and Le Merdy \cite{LeMerdy2014a},
which improves earlier results in \cite{Lyubich99,NagyZemanek99} and \cite{Nevanlinna93}.
\begin{lemma}
	\label{Lemma1}
	Let $T$ be a Tadmor--Ritt operator on a Banach space $X$. Then, there exists $\theta\in[0,\frac{\pi}{2})$ such that
	\begin{enumerate}[label={(\roman*)}]
		\item\label{Lemma1it1}  $\sigma(T)\subset \overline{\mathcal{B}_{\theta}}$, and
		\item\label{Lemma1it2}for all $\eta\in(\theta,\frac{\pi}{2}]$,
		 \begin{equation} \label{eq:TadmorRitttheta}
			C_{\eta}(T)=\sup\nolimits_{z\in\C\setminus\overline{\mathcal{B}_{\eta}}} \|(z-1)R(z,T)\| \leq \frac{C(T)}{1-\frac{\cos{\eta}}{\cos\theta}}.
		\end{equation}
	\end{enumerate}
	We say that $T$ is \textit{of type $\theta$}. \newline
	Moreover, $\theta$ can always chosen to be $\theta=\arccos\frac{1}{C(T)}$.
\end{lemma}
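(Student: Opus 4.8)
Put $\theta:=\arccos\frac{1}{C(T)}$; since $(z-1)R(z,T)\to I$ as $z\to\infty$ one has $C(T)\ge1$, so $\theta\in[0,\frac\pi2)$, and it suffices to establish \ref{Lemma1it1} and \ref{Lemma1it2} for this $\theta$, which then gives both the existence statement and the final sentence. Everything rests on a single perturbation estimate: if $|\mu|>1$, then \eqref{eq:TadmorRitt} gives $\|R(\mu,T)\|\le\frac{C(T)}{|\mu-1|}$, so for every $z$ with $C(T)|z-\mu|<|\mu-1|$ the Neumann series $R(z,T)=\bigl(I-(\mu-z)R(\mu,T)\bigr)^{-1}R(\mu,T)$ converges; hence $z\in\rho(T)$ and $\|R(z,T)\|\le C(T)\bigl(|\mu-1|-C(T)|z-\mu|\bigr)^{-1}$. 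Thus it is enough to prove two planar statements, in which $\cos\theta=1/C(T)$ is used throughout: \emph{(a)} every $z\notin\overline{\mathcal{B}_{\theta}}$ admits some $\mu$ with $|\mu|>1$ and $C(T)|z-\mu|<|\mu-1|$; \emph{(b)} for $\eta\in(\theta,\frac\pi2]$, every $z\notin\overline{\mathcal{B}_{\eta}}$ admits some $\mu$ with $|\mu|>1$, $\;C(T)|z-\mu|\le\frac{\cos\eta}{\cos\theta}|\mu-1|$ \emph{and} $|z-1|\le|\mu-1|$. Claim (a) is exactly \ref{Lemma1it1}. Granting (b) — and noting $\frac{\cos\eta}{\cos\theta}<1$ precisely because $\eta>\theta$, so the Neumann series still converges — the perturbation estimate yields
\begin{equation*}
\|(z-1)R(z,T)\|\le\frac{|z-1|}{|\mu-1|}\cdot\frac{C(T)}{1-\frac{\cos\eta}{\cos\theta}}\le\frac{C(T)}{1-\frac{\cos\eta}{\cos\theta}},
\end{equation*}
which is \eqref{eq:TadmorRitttheta} (for $|z|>1$ the bound is anyway immediate from \eqref{eq:TadmorRitt}, as the denominator is $\le1$).

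To prove (a) and (b) I would pick the anchor $\mu=\mu(z)$ in one of three ways, according to the position of $z$ relative to the Stolz domain. If $|z|>1$, take $\mu=z$. If $|z|\le1$ and $z$ lies outside the full infinite cone of vertex $1$, half-angle $\eta$ (resp.\ $\theta$) and axis pointing in the negative real direction — then $z\notin\overline{\mathcal{B}_{\eta}}$ automatically, and writing $z=1-a+\mathrm{i}b$ one has $|b|>a\tan\eta$ — take $\mu=1\pm\mathrm{i}\sigma$ on the vertical line through $1$, which meets $\overline{\D}$ only at $1$, with $\sigma$ the larger root of $q(\sigma):=\sin^2\!\eta\,\sigma^2-2|b|\sigma+(a^2+b^2)$; here $|b|\ge a\tan\eta$ is exactly what makes the discriminant of $q$ nonnegative, the identity $|z-\mu|^2=a^2+(|b|-\sigma)^2$ turns $C(T)|z-\mu|\le\frac{\cos\eta}{\cos\theta}|\mu-1|$ into $q(\sigma)\le0$, and one checks $|z-1|\le\sigma=|\mu-1|$. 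If $|z|\le1$, $z$ lies inside that cone yet outside $\overline{\mathcal{B}_{\eta}}$ (so $z$ is ``beyond'' the rounding arc $|w|=\sin\eta$), take $\mu=(1+\varepsilon)z/|z|$ with $\varepsilon\downarrow0$: then $|z-\mu|\to1-|z|\le1-\sin\eta$ and $|\mu-1|\to\bigl| z/|z|-1 \bigr|$, the extremal configuration is $z\to\sin\eta\,\mathrm{e}^{\pm\mathrm{i}(\pi/2-\eta)}$ (where the arc meets a straight edge of $\mathcal{B}_{\eta}$), and there both inequalities of (b) reduce to $\cos(\tfrac\pi4-\tfrac\eta2)\ge\tfrac12$, which holds on all of $[0,\tfrac\pi2]$. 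Running the three cases with $\eta$ replaced by $\theta$ and all inequalities strict proves (a); the value $\theta=\arccos(1/C(T))$ is forced precisely here, as the smallest half-angle for which the discs $\{z:C(T)|z-\mu|<|\mu-1|\}$, $|\mu|\ge1$, still cover the complement of $\overline{\mathcal{B}_{\theta}}$.

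The main obstacle is exactly this geometric bookkeeping: for \emph{every} $z$ outside the (slightly enlarged) Stolz domain one must exhibit a \emph{single} anchor $\mu$ satisfying both inequalities of (b) at once, and then check that the resulting elementary trigonometric inequalities hold uniformly in $\eta$; the division between the second and third cases is whether $z$ is in or outside the infinite cone, and the estimates must be verified for points of both kinds accumulating at the corners $\sin\eta\,\mathrm{e}^{\pm\mathrm{i}(\pi/2-\eta)}$ of $\partial\mathcal{B}_{\eta}$. A conceptually cleaner, essentially equivalent route is to pass to the Cayley transform $A=(I-T)(I+T)^{-1}$ — a bounded operator, since $-1\in\rho(T)$ — for which \eqref{eq:TadmorRitt} becomes a resolvent bound on a left half-plane; the half-plane version of the Neumann argument is genuinely scale-invariant and places $\sigma(A)$ in a sector, which one then transports back through the Möbius map, although recovering the sharp value $\theta=\arccos(1/C(T))$ still takes comparable care.
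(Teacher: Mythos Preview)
The paper does not actually prove this lemma; it refers to Vitse \cite{Vitse04Turndown,Vitse2005b} and Le~Merdy \cite{LeMerdy2014a}. So there is no in-paper proof to compare against; I can only assess your argument on its own merits and against the cited literature.

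Your approach --- Neumann-series perturbation of the resolvent from an anchor $\mu$ with $|\mu|>1$, reducing everything to covering $\C\setminus\overline{\mathcal B_\eta}$ by discs $\{z:C(T)|z-\mu|<|\mu-1|\}$ --- is exactly the standard one, and the reduction to the two planar claims (a), (b) is correct. Case~1 and Case~2 are fine: in Case~2 the discriminant condition $|b|\ge a\tan\eta$ is precisely ``outside the cone'', the larger root $\sigma_+$ is positive (since the product and sum of roots are positive), and $|z-1|^2=a^2+b^2=\sigma_+\sigma_-\sin^2\eta\le\sigma_+^2$ follows from $\sigma_+\ge\sigma_-$.

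The only genuine gap is Case~3. You check the two inequalities of (b) \emph{only at the corner} $\sin\eta\,e^{\pm i(\pi/2-\eta)}$ and assert that this is extremal, without proof. It is in fact true, and here is the missing argument. Write $z=re^{i\phi}$; the Case~3 constraints force $x:=r\cos\phi\le\sin^2\eta$ and $r>\sin\eta$. From $x\le\sin^2\eta$ and $r>\sin\eta$ one gets $\cos\phi=x/r<\sin\eta=\cos(\tfrac\pi2-\eta)$, hence $|\phi|>\tfrac\pi2-\eta$ and $|\sin(\phi/2)|\ge\sin(\tfrac\pi4-\tfrac\eta2)$. Then the first inequality of (b), in the limit $\varepsilon\downarrow0$, reads $1-r\le2\cos\eta\,|\sin(\phi/2)|$; since $1-r<1-\sin\eta=2\sin^2(\tfrac\pi4-\tfrac\eta2)$ and $\cos\eta=2\sin(\tfrac\pi4-\tfrac\eta2)\cos(\tfrac\pi4-\tfrac\eta2)$, this indeed reduces to $\cos(\tfrac\pi4-\tfrac\eta2)\ge\tfrac12$. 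The second inequality $|z-1|\le|z/|z|-1|$ is equivalent to $r^2+r\ge2x$, and this follows immediately from $r^2>\sin^2\eta\ge x$ together with $r\ge x$; so it does \emph{not} reduce to the same trigonometric inequality --- it is automatic. Since both limiting inequalities are strict, a sufficiently small $\varepsilon>0$ gives an admissible $\mu$ with $|\mu|>1$.

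With this filled in, your proof is complete and matches the approach used in the references the paper cites. The Cayley-transform route you sketch at the end is the other standard proof (reducing to sectoriality); both appear in the literature and neither is materially shorter once the constants are tracked.
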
 
Note that $\mathcal{B}_{\alpha}\subset \mathcal{B}_{\beta}$ for $\alpha<\beta$.	
The previous lemma tells us that for $\eta$ going to $\theta$, the right-hand-side of $\eqref{eq:TadmorRitttheta}$ explodes 
whereas for $\eta=\frac{\pi}{2}$ it becomes $C(T)$. We further remark that the converse of Lemma \ref{Lemma1} also holds: If there exists $\theta\in(0,\frac{\pi}{2})$ such that $\sigma(T)\subset\overline{\mathcal{B}_{\theta}}$ and $C_{\eta}<\infty$ for all $\eta\in(\theta,\frac{\pi}{2})$, then $T$ is Tadmor--Ritt, see \cite[Lemma 2.1]{LeMerdy2014a}.\newline
We further need the following well-known characterization, which can be found e.g., in \cite{LeMerdy2014a,Lyubich99,NagyZemanek99,Vitse2005b}.
\begin{lemma}\label{lem:charRitt}
Let $T$ be an operator on a Banach space $X$. The following  assertions are equivalent.
\begin{enumerate}[label=\textit{(\roman*)}]
\item $T$ is Tadmor--Ritt.
\item \label{it2lemcharRitt} The sets $\left\{T^{n}:n\in\N\right\}$ and $\left\{n(T^{n}-T^{n-1}):n\in\N\right\}$ are bounded, i.e.
		\begin{equation}\label{eq1:lemcharRitt}
		Pb(T)=\sup_{n\in\N}\|T^{n}\|<\infty, \text{ and } c_{1,T}:=\sup_{n\in\N}\|n(T^{n}-T^{n-1})\|<\infty.
		\end{equation}
\end{enumerate}
\end{lemma}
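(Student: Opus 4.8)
The plan is to prove the two implications separately, relying on the Riesz--Dunford functional calculus together with the Stolz-domain description from Lemma \ref{Lemma1}.

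\medskip

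\emph{(i) $\Rightarrow$ (ii).} Assume $T\in TR(X)$, so by Lemma \ref{Lemma1} it is of some type $\theta\in[0,\frac\pi2)$ with $\sigma(T)\subset\overline{\mathcal{B}_\theta}$ and $C_\eta(T)<\infty$ for each $\eta\in(\theta,\frac\pi2]$. The idea is to represent the powers by a Cauchy integral: for a suitable contour $\Gamma_\eta=\partial\mathcal{B}_\eta$ (with $\eta\in(\theta,\frac\pi2)$ fixed) surrounding $\sigma(T)$,
\[
	T^n=\frac{1}{2\pi i}\int_{\Gamma_\eta} z^n R(z,T)\,dz,\qquad
	n(T^n-T^{n-1})=\frac{1}{2\pi i}\int_{\Gamma_\eta} n z^{n-1}(z-1)R(z,T)\,dz.
\]
On $\Gamma_\eta$ one has $\|(z-1)R(z,T)\|\le C_\eta(T)$, so the integrands are bounded by $C_\eta(T)|z|^n$ and $C_\eta(T)\,n|z|^{n-1}$ respectively. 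Parametrising the contour near the corner at $1$ one sees that for $z\in\Gamma_\eta$ the modulus satisfies $|z|\le 1-c\,|z-1|$ for some $c=c(\eta)>0$ (this is exactly the ``Stolz'' geometry), hence $|z|^n\le e^{-cn|z-1|}$; the arc-length element near $1$ is comparable to $d|z-1|$, while away from $1$ the integrand decays geometrically in $n$. Then the standard estimates
\[
	\int_0^\infty e^{-cns}\,ds=\frac{1}{cn},\qquad \int_0^\infty n\,e^{-cns}\,ds = \frac1c
\]
give $\|T^n\|\lesssim C_\eta(T)$ and $\|n(T^n-T^{n-1})\|\lesssim C_\eta(T)$, uniformly in $n$. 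This yields \eqref{eq1:lemcharRitt}. (One has to check the $n=0,1$ cases, or small $n$, separately, but these are trivial since $\|T^0\|=1$ and $\|T-T^0\|\le 1+\|T\|$.)

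\medskip

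\emph{(ii) $\Rightarrow$ (i).} Conversely, suppose $Pb(T)=:M<\infty$ and $c_{1,T}=:L<\infty$. First, power-boundedness forces $\sigma(T)\subset\overline{\D}$, since $\|T^n\|\le M$ gives spectral radius $\le 1$. To estimate the resolvent for $|z|>1$, the plan is to exploit the telescoping identity. Writing $T^n-T^{n-1}=(T-I)T^{n-1}$ and using the Neumann-type series for $R(z,T)$ valid for $|z|>1$,
\[
	R(z,T)=\sum_{n\ge 1}\frac{T^{n-1}}{z^{n}},
\]
one computes
\[
	(z-1)R(z,T)=\sum_{n\ge1}\frac{(z-1)T^{n-1}}{z^n}
	=\frac{1}{z}\Bigl(I+\sum_{n\ge1}\frac{(T-I)T^{n-1}}{z^{n}}\cdot\frac{?}{}\Bigr),
\]
so the cleaner route is to split the sum: group terms to expose the differences $T^{n}-T^{n-1}$ using summation by parts (Abel summation). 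Concretely, write $(z-1)R(z,T)=(z-1)\sum_{n\ge0}z^{-n-1}T^{n}$ and rearrange via $\sum a_n(b_n-b_{n+1})$ with $b_n=z^{-n}$ and $a_n=T^n$; the boundary term contributes something bounded by $M$, and the remaining series has general term controlled by $\|T^{n}-T^{n-1}\|\le L/n$ against a geometric-type weight $|z|^{-n}$. Summing $\sum_{n\ge1} \frac{L}{n}|z|^{-n} \le L\log\frac{1}{1-|z|^{-1}}$ for $|z|>1$ — which blows up only logarithmically as $|z|\to1$ — does not immediately give a \emph{bounded} $(z-1)R(z,T)$, so one must be more careful: the correct Abel rearrangement should make the $(z-1)$ factor cancel the singular part, leaving $(z-1)R(z,T)=I + (1-z^{-1})\sum_{n\ge1} n(T^n-T^{n-1})\cdot\frac{z^{-n}}{n}$-type expression where the extra $(1-z^{-1})$ provides the decay $\sum_{n\ge1}\frac{L}{n}|z|^{-n}\cdot|1-z^{-1}|$, and $|1-z^{-1}|\cdot\log\frac{1}{1-|z|^{-1}}\to0$, hence is bounded. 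This shows $\sup_{|z|>1}\|(z-1)R(z,T)\|<\infty$, i.e.\ $T$ is Tadmor--Ritt.

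\medskip

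\emph{Main obstacle.} The delicate point is the direction (ii) $\Rightarrow$ (i): getting a genuinely \emph{bounded} (not merely logarithmically growing) estimate for $(z-1)R(z,T)$ near $z=1$. The naive bound on $\sum \frac Ln |z|^{-n}$ is only $O(\log\frac{1}{|z|-1})$, so the summation-by-parts must be organised so that the factor $z-1$ (equivalently $1-z^{-1}$) is paired against the logarithmically-divergent sum, and one needs the elementary fact that $t\log(1/t)$ stays bounded as $t\to0^+$. Arranging the Abel summation correctly — i.e.\ identifying which boundary terms survive and verifying the remaining series converges with the right weight — is the crux; the Cauchy-integral estimates in (i) $\Rightarrow$ (ii) are routine by comparison.

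Since this lemma is quoted as ``well-known'' with references to \cite{LeMerdy2014a,Lyubich99,NagyZemanek99,Vitse2005b}, I would in practice simply cite those and omit the proof, but the sketch above is the argument those references carry out.
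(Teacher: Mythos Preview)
As you note at the end, the paper does not prove this lemma and simply cites \cite{LeMerdy2014a,Lyubich99,NagyZemanek99,Vitse2005b}; so there is nothing to compare against, and your instinct to cite and omit is exactly what the paper does. That said, your sketch contains genuine gaps in both directions.

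For (i)$\Rightarrow$(ii): your bound on the first integrand is wrong. One only has $\|z^{n}R(z,T)\|\le C_{\eta}(T)\,|z|^{n}/|z-1|$, not $C_{\eta}(T)|z|^{n}$; with the correct bound the integral over $\partial\mathcal{B}_{\eta}$ diverges logarithmically at $z=1$ (and if $1\in\sigma(T)$ the contour is not even admissible for Riesz--Dunford). This non-integrability is precisely why the paper's own Theorem~\ref{thm1} introduces the keyhole contour $\partial\Omega_{\eta,r}$ with a small circle of radius $r\sim\tfrac{1}{n+1}$ around $1$; that modification is what makes the power-bound go through (cf.\ Corollary~\ref{cor1}). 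Your argument for $\|n(T^{n}-T^{n-1})\|\lesssim1$ is fine, since there the factor $(z-1)$ cancels the resolvent singularity.

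For (ii)$\Rightarrow$(i): the Abel summation indeed gives $(z-1)R(z,T)=I+\sum_{n\ge1}z^{-n}(T^{n}-T^{n-1})$, hence $\|(z-1)R(z,T)\|\le 1+L\log\tfrac{|z|}{|z|-1}$, but your ``fix'' does not work. The displayed ``$(1-z^{-1})\sum n(T^{n}-T^{n-1})z^{-n}/n$'' is not a new identity (the $n$'s cancel), and the quantity $|1-z^{-1}|\log\tfrac{1}{1-|z|^{-1}}$ is \emph{not} bounded on $\{|z|>1\}$: for $z$ near $-1$ with $|z|\to1^{+}$ the first factor is $\approx2$ while the logarithm blows up. Even near $z=1$ the series estimate depends only on $|z|$, so for $z=1+it$ one has $|z|-1\sim t^{2}/2$ and the bound still diverges. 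The proofs in the cited references do not close by summation-by-parts alone; one typically first shows $\sigma(T)\cap\partial\D\subset\{1\}$ via spectral mapping, treats $|z-1|\ge\delta$ by continuity of the resolvent on that compact set, and then uses a genuinely sharper argument near $z=1$ (equivalently, sectoriality of $I-T$).
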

Let us emphasize that $\sup_{n\in\N}\|n(T^{n}-T^{n-1})\|<\infty$ does not imply power-bounded\-ness of $T$ in general, see \cite{KaltonTomilov}. Hence, in \ref{it2lemcharRitt} of Lemma \ref{lem:charRitt}, the assumption of power-boundedness cannot be dropped. See also \cite{Nevanlinna94} for a discussion on power-boundedness related to estimates on the resolvent.


\section{A functional calclulus result for Tadmor--Ritt operators}

 By Lemma \ref{Lemma1} we know that the spectrum of a Tadmor--Ritt operator is contained in the Stolz type domain $\overline{\mathcal{B}_{\theta}}$,
with $\theta=\arccos\frac{1}{C(T)}$. Let $\Omega\supset\overline{\mathcal{B}_{\theta}}$ be an open, bounded and simply connected subset of $\C$.
Then for any function holomorphic on  $\Omega$, the 
operator $f(T)$ can be defined via the Riesz-Dunford integral
	\begin{equation}\label{eq:RieszDunford}
		f(T)=\frac{1}{2\pi i}\int_{\Gamma} f(z)\ R(z,T) \ dz,
	\end{equation} 
	where $\Gamma$ is a rectifiable, positively orientated, simple contour inside $\Omega$ which encircles $\overline{\mathcal{B}_{\theta}}$. Let $H^{\infty}(\Omega)$ denote the bounded holomorphic functions on $\Omega$. 
	\begin{remark}\label{rem21} Let $H_{0}^{\infty}(\mathcal{B}_{\delta})$ be the functions $f$ in $H^{\infty}(\mathcal{B}_{\delta})$ for which exist constants $c,s>0$ such that $f(z)\leq c|1-z|^{s}$ for all $z\in\mathcal{B}_{\delta}$. For $\delta\in(\theta,\frac{\pi}{2})$ and $f\in H_{0}^{\infty}(\mathcal{B}_{\delta})$, $f(T)$ can still be defined by (\ref{eq:RieszDunford}) with $\Gamma$ equal to the boundary of $\partial\mathcal{B}_{\delta'}$ of $\mathcal{B}_{\delta'}$ with $\delta'\in(\theta,\delta)$. Analogously to the situation for sectorial operators, see e.g., \cite{haasesectorial}, it can be shown that the mapping $f\mapsto f(A)$ becomes an algebra homomorphism from $H_{0}^{\infty}(\mathcal{B}_{\delta})$ to $\mathcal{B}(X)$, see \cite[Section 2]{LeMerdy2014a} for more details. 
	\end{remark}
	For $0<r<1$ and $\eta\in(0,\frac{\pi}{2}]$, we define the `keyhole-shaped' set, 
		\begin{equation}\label{eq:Omega}
			\Omega_{\eta,r}:= \mathcal{B}_{\eta} \cup B_{r}(1),
		\end{equation}
		see Figure \ref{fig1}.
	\begin{figure}
		\label{fig1}	\begin{tikzpicture}[scale=2.1]
\normalsize 


\pgfmathsetmacro{\THETA}{30}
\pgfmathsetmacro{\ETA}{40}
\pgfmathsetmacro{\R}{0}
\pgfmathsetmacro{\COSMINR}{cos(\ETA)}
\pgfmathsetmacro{\SINTHETA}{sin(\THETA)}
\pgfmathsetmacro{\SINETA}{sin(\ETA)}

\coordinate (zero) at (0,0);
\coordinate (one) at (1,0);
\coordinate (imag) at (0,1);
\draw [black, very thick,fill=lightgray, fill opacity=.7] ([shift=(60:\SINTHETA)] 0, 0) arc [radius=\SINTHETA, start angle={90-\THETA}, end angle={270+\THETA}];

\drawarcdelta{(zero)}{60}{240}{\SINTHETA}

\drawarcdelta{(zero)}{0}{360}{1}
\drawarcdeltadotted{(one)}{140}{-280}{\R}

\draw [black] (zero) -- (60:\SINTHETA);
\draw [black, very  thick,fill=lightgray, fill opacity=.7]  (one)+(150:{cos(\THETA}) --(one)-- +(-150:{cos(\THETA});

\drawlabeledarcdelta{(60:\SINTHETA)}{-120}{90}{0.15}{$\cdot$}{0.08}

\drawlabeledarcdelta{(one)}{150}{30}{0.5}{\small$\theta$}{0.35}
\draw [<->,black] (-1.2,0) -- (1.2,0);
\draw [<->,black] (0,-1.2) -- (0,1.2);

\node [below left] at ([shift=(225:{\SINTHETA*0.1})] zero) {$\mathcal{B}_{\theta}$};

\node [below right] at (zero)  {\small$0$};
\node [below right] at (one) {\small$1$};
\node [above right] at (imag) {\small$i$};
\normalsize
\end{tikzpicture}
\hspace{1cm}
	\begin{tikzpicture}[scale=2.1]
\normalsize 
\pgfmathsetmacro{\THETA}{30}
\pgfmathsetmacro{\ETA}{40}
\pgfmathsetmacro{\R}{0.3}
\pgfmathsetmacro{\COSMINR}{cos(\ETA)}
\pgfmathsetmacro{\SINTHETA}{sin(\THETA)}
\pgfmathsetmacro{\SINETA}{sin(\ETA)}

\coordinate (zero) at (0,0);
\coordinate (one) at (1,0);
\coordinate (imag) at (0,1);

\draw [black, thick] ([shift=(60:\SINTHETA)] 0, 0) arc [radius=\SINTHETA, start angle={90-\THETA}, end angle={270+\THETA}];

\drawarcdelta{(zero)}{60}{240}{\SINTHETA}
\drawarcdeltadotted{(zero)}{50}{260}{\SINETA}
\drawarcdelta{(zero)}{0}{360}{1}
\drawarcdeltadotted{(one)}{140}{-280}{\R}

\draw [->,black, very thick, dashed] (one)+(140:\R) -- +(140:\COSMINR);
\draw [<-,black, very thick, dashed] (one)+(-140:\R) -- +(-140:\COSMINR);
\draw [black, thick]  (one)+(150:{cos(\THETA}) --(one)-- +(-150:{cos(\THETA});
\drawlabeledarcdelta{(one)}{140}{40}{0.6}{$\eta$}{0.45}

\draw [<->,black] (-1.2,0) -- (1.2,0);
\draw [<->,black] (0,-1.2) -- (0,1.2);

\node [below left] at ([shift=(225:{\SINTHETA*0.1})] zero) {$\partial\mathcal{B}_{\theta}$};

\draw [->, black] (one) -- node [above]{\small$r$}+(20:\R);

\node [below right] at (zero)  {\small$0$};
\node [below right] at ([shift=(-90:\R)] one)  {$\partial\Omega_{\eta,r}$};
\node [below right] at (one) {\small$1$};
\node [above right] at (imag) {\small$i$};
\normalsize
\end{tikzpicture}
	\caption{The sets $\mathcal{B}_{\theta}$ and $\Omega_{\eta,r}$ with $\eta\in\left(\theta,\frac{\pi}{2}\right)$.}
		\end{figure}
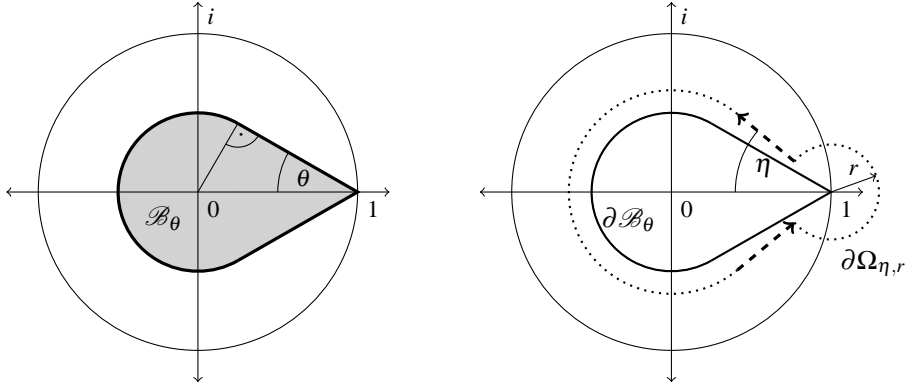
	\newline The function 
	\begin{equation}\label{eq:EI}
	{\rm Ei}(s)=\int_{s}^{\infty}\frac{e^{-x}}{x}dx
	\end{equation}
	 is known as the \textit{Exponential integral}. It holds that 
	\begin{align}\label{est:Ei}
	\tfrac{1}{2}e^{-s}\log\left(1+\tfrac{2}{s}\right)<{}&{\rm Ei}(s)< e^{-s}\log(1+\tfrac{1}{s}), \ s>0,\\
	&{\rm Ei}(s)<\log(\tfrac{1}{s}),\ s\in(0,\tfrac{1}{2}],\label{est2:Ei}
	\end{align}
	see \cite{Gautschi59} and \cite{Schwenninger15a} for more details. \newline
	The following lemma outsources technicalities in the proof of the results to come, Theorem \ref{thm1}. 
	Estimates of this kind for deriving functional calculus estimates can already be found in \cite{Bakaev03,Palencia93,Vitse2005b}, see also \cite{LeRoux,LubichNevanlinna} for a slightly different setting.  Here, the focus is laid on deriving estimates explicitly in the used constants.
	\begin{lemma}\label{lem2}
		For $0<r<1$ , $m\geq0$ and $\eta\in(0,\frac{\pi}{2})$, we have that
			\begin{equation}\label{eq:lem2}
			 G(m,\eta,r) := \int_{\partial\Omega_{\eta,r}} \frac{|z|^{m}}{ |z-1|} \ |dz| \leq \mathcal{C}(r,m,\eta)
			\end{equation}
			where $\partial\Omega_{\eta,r}$ denotes the boundary of the set defined in (\ref{eq:Omega}) and
			\begin{equation*}
			\mathcal{C}(r,m,\eta):=4(\sin\eta)^{m+1}\log\tfrac{4}{\cos\eta}+4{\rm Ei}\left(r\tfrac{m+1}{2}\cos\eta\right)+2\pi(1+r)^{m},
			\end{equation*}
			where $\rm Ei$ is defined in (\ref{eq:EI}). \newline If $r\leq\frac{1}{m+1}$, by (\ref{est2:Ei}), 
			\begin{equation*}
			\mathcal{C}(r,m,\eta)\leq -8\log\cos\eta-4\log(r(m+1))+2\pi(1+r)^{m}+12\log2.
			\end{equation*}
	\end{lemma}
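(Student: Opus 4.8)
The plan is to split the contour $\partial\Omega_{\eta,r}$ into three explicitly parametrisable pieces and estimate $\int\frac{|z|^m}{|z-1|}\,|dz|$ over each. Recall that $\mathcal{B}_{\eta}$ is the convex hull of $\{1\}$ and $\overline{B_{\sin\eta}(0)}$, so $\partial\mathcal{B}_{\eta}$ consists of the two straight segments issuing from the vertex $1$ in the directions $e^{i(\pi\pm\eta)}$ (each of length $\cos\eta$, tangent to $\{|z|=\sin\eta\}$ at $\sin\eta\,e^{\pm i(\pi/2-\eta)}$) together with the \emph{major} arc of $\{|z|=\sin\eta\}$ between the two tangent points, of angular length $\pi+2\eta$. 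Taking the union with $B_{r}(1)$ replaces the corner at $1$ by the arc $A_{r}$ of $\{|z-1|=r\}$ lying outside the cone at $1$ (angular length $\le 2\pi$) and shortens the two segments to $\{1-te^{\mp i\eta}:r\le t\le\cos\eta\}$. If $r\ge\cos\eta$ the segments disappear, $A_r$ grows and the $\sin\eta$-arc is truncated; since truncation only decreases the corresponding integral and $\mathcal{C}$ contains the nonnegative term $4\,{\rm Ei}(\cdot)$, that case is subsumed, so I may assume $r<\cos\eta$. On $A_{r}$ one has $|z-1|=r$, $|z|\le 1+r$, $|dz|=r\,|d\theta|$, so this piece contributes at most $(1+r)^{m}\cdot 2\pi$, the last term of $\mathcal{C}(r,m,\eta)$.

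For the $\sin\eta$-arc I would parametrise by $\psi=\arg(z-1)$ rather than by the argument of $z$: as $z$ runs over the major arc, $\psi$ runs monotonically over an interval of length $2\eta$ (the angular aperture of $\{|z|=\sin\eta\}$ seen from $1$), the function $\ell(\psi):=|z-1|$ is unimodal with maximum $1+\sin\eta$ and minimum $\cos\eta$, and $|dz|=\sqrt{\ell^{2}+(\ell')^{2}}\,d\psi\le(\ell+|\ell'|)\,d\psi$. Since $|z|=\sin\eta$ there, this piece contributes at most
\[
(\sin\eta)^{m}\int\Bigl(1+\bigl|\tfrac{d}{d\psi}\log\ell\bigr|\Bigr)\,d\psi=(\sin\eta)^{m}\Bigl(2\eta+2\log\tfrac{1+\sin\eta}{\cos\eta}\Bigr),
\]
the second summand being the total variation of $\log\ell$. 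The two elementary inequalities $\eta\le\sin\eta\log\tfrac{4}{\cos\eta}$ and $\log\tfrac{1+\sin\eta}{\cos\eta}\le\sin\eta\log\tfrac{4}{\cos\eta}$ (both proved by differentiating in $\eta\in(0,\tfrac{\pi}{2})$; the second reduces, with $s=\sin\eta$, to $\tfrac{1+s}{2}\log(1+s)-\tfrac{1-s}{2}\log(1-s)\le s\log4$, whose derivative $\tfrac12\log(1-s^{2})+1-2\log2$ is negative on $[0,1)$) then turn this into the first term $4(\sin\eta)^{m+1}\log\tfrac{4}{\cos\eta}$ of $\mathcal{C}$.

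On each segment parametrise $z=1-te^{\mp i\eta}$, $t\in[r,\cos\eta]$, so $|z-1|=t$, $|dz|=dt$ and $|z|^{2}=1-2t\cos\eta+t^{2}$; this is decreasing in $t$ on $[0,\cos\eta]$ with minimum $\sin^{2}\eta$, and $1-2t\cos\eta+t^{2}\le 1-t\cos\eta\le e^{-t\cos\eta}$ (using $t^{2}\le t\cos\eta$ and $1-x\le e^{-x}$). Hence $|z|^{m}=(1-2t\cos\eta+t^{2})^{(m+1)/2}(1-2t\cos\eta+t^{2})^{-1/2}\le\tfrac{1}{\sin\eta}e^{-\frac{m+1}{2}t\cos\eta}$, and the substitution $x=\tfrac{m+1}{2}t\cos\eta$ gives
\[
\int_{r}^{\cos\eta}\frac{e^{-\frac{m+1}{2}t\cos\eta}}{t}\,dt\le\int_{r(m+1)\cos\eta/2}^{\infty}\frac{e^{-x}}{x}\,dx={\rm Ei}\Bigl(r\tfrac{m+1}{2}\cos\eta\Bigr).
\]
For $\eta\ge\tfrac{\pi}{6}$ this already bounds the two segments together by $4\,{\rm Ei}(r\tfrac{m+1}{2}\cos\eta)$; for small $\eta$, where $\cos\eta$ is close to $1$ and the interval $[r,\cos\eta]$ is short, one uses instead the cruder bound $|z|^{m}\le e^{-\frac m2 t\cos\eta}$ (giving $\le{\rm Ei}(r\tfrac m2\cos\eta)$ for $m\ge1$, resp.\ $\le 2\log\tfrac{\cos\eta}{r}$ for $m=0$) together with an elementary monotonicity estimate for $\rm Ei$ to reach the same bound. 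I expect precisely this last book-keeping---matching the constant $4$ uniformly in $\eta\in(0,\tfrac{\pi}{2})$ and $r\in(0,1)$, the naive estimate carrying the factor $\tfrac{2}{\sin\eta}$ that degenerates as $\eta\to0$---to be the only genuinely delicate point; everything else is direct parametrisation plus $1-x\le e^{-x}$ and the substitution identifying $\rm Ei$.

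Adding the three contributions yields $G(m,\eta,r)\le\mathcal{C}(r,m,\eta)$. For the simplified bound, note that $r\le\tfrac1{m+1}$ forces $r\tfrac{m+1}{2}\cos\eta\le\tfrac12$, so \eqref{est2:Ei} gives ${\rm Ei}(r\tfrac{m+1}{2}\cos\eta)\le\log\tfrac{2}{r(m+1)\cos\eta}$; inserting this, using $(\sin\eta)^{m+1}\le1$ and $\log\tfrac{4}{\cos\eta}=2\log2-\log\cos\eta$, and collecting the logarithms produces $-8\log\cos\eta-4\log(r(m+1))+2\pi(1+r)^{m}+12\log2$.
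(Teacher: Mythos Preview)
Your decomposition into the small-circle arc $A_r$, the two segments, and the $\sin\eta$-arc is exactly the split $\Gamma_3\cup\Gamma_2\cup\Gamma_1$ used in the paper, and your bound for $A_r$ is identical. Your treatment of the $\sin\eta$-arc is genuinely different: the paper parametrises by $\delta=\arg z$, uses $2|Re^{ix}-1|\ge|e^{ix}-1|$, and integrates $\frac{1}{\sqrt{1-\cos x}}$ via the primitive $\sqrt{2}\log\tan\tfrac{x}{4}$, whereas you parametrise by $\psi=\arg(z-1)$ and reduce to the total variation of $\log|z-1|$. Both routes land on $4(\sin\eta)^{m+1}\log\tfrac{4}{\cos\eta}$; yours is more geometric and avoids the explicit antiderivative, the paper's avoids the two auxiliary inequalities in $\eta$ that you need afterwards.

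The genuine gap is in your segment estimate. Your bound $|z|^m\le\tfrac{1}{\sin\eta}e^{-\frac{m+1}{2}t\cos\eta}$ is correct but the prefactor $\tfrac{2}{\sin\eta}$ is fatal for small $\eta$, as you note. Your proposed repair --- dropping the $\tfrac{1}{\sin\eta}$ and using $|z|^m\le e^{-\frac{m}{2}t\cos\eta}$, then invoking ``monotonicity of ${\rm Ei}$'' --- does not close: it would require ${\rm Ei}\bigl(\tfrac{m}{m+1}s\bigr)\le 2\,{\rm Ei}(s)$ for $s=r\tfrac{m+1}{2}\cos\eta$, and this is false already for $m=1$ and $s$ near $1$ (numerically ${\rm Ei}(0.5)\approx0.560>2\,{\rm Ei}(1)\approx0.439$). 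The paper's device sidesteps the whole issue in one line: from $|z|^m\le(1-t\cos\eta)^{m/2}\le e^{-\frac{m}{2}t\cos\eta}$ one multiplies by $1\le e^{1/2}e^{-\frac{1}{2}t\cos\eta}$ (valid since $t\cos\eta\le\cos^2\eta\le1$) to obtain $|z|^m\le e^{1/2}e^{-\frac{m+1}{2}t\cos\eta}$, so that the two segments together give
\[
G_2\le 2e^{1/2}\int_r^{\cos\eta}\frac{e^{-\frac{m+1}{2}t\cos\eta}}{t}\,dt\le 2e^{1/2}\,{\rm Ei}\Bigl(r\tfrac{m+1}{2}\cos\eta\Bigr)\le 4\,{\rm Ei}\Bigl(r\tfrac{m+1}{2}\cos\eta\Bigr),
\]
uniformly in $\eta$. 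With this replacement your proof goes through; the simplified bound for $r\le\tfrac{1}{m+1}$ is then exactly as you wrote.
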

	\begin{proof}
	Let us first assume that $r<\cos\eta$. We split up the path $\partial\Omega_{\eta,r}=\Gamma_{1}\cup\Gamma_{2}\cup\Gamma_{3}$, where $\Gamma_{2}$ denotes the union of the two straight line segments of $\partial\Omega_{\eta,r}$ (dashed lines in Figure \ref{fig1}), whereas $\Gamma_{1}$, $\Gamma_{3}$ denote the  part of $\partial\Omega_{\eta ,r}$ that lies
 	 on the circles $B_{\sin\eta }(0)$ and $B_{r}(1)$, respectively (dotted lines in Figure \ref{fig1}). Precisely, 
	\begin{align*}
			\Gamma_{1} =&\left\{(\sin\eta)  e^{i\delta},|\delta|\in(\tfrac{\pi}{2}-\eta ,\pi]\right\},
			\Gamma_{2}=\left\{1-te^{\pm i\eta },t\in(r,\cos\eta ]\right\},\\
			\Gamma_{3}=&\left\{1+re^{i\delta}, |\delta|\in[0,\pi-\eta )\right\},
	\end{align*} 
	Next we estimate $G_{i}:=\int_{\Gamma_{i}}\frac{|z|^{m}}{|z-1|}|dz|$ for $i=1,2,3$.\newline
	For $\Gamma_{1}$, we see that 
	\begin{equation*}
		G_{1} = 2(\sin\eta)^{m+1} \int_{\frac{\pi}{2}-\eta}^{\pi} \frac{dx}{| e^{ix}\sin\eta-1|}.
	\end{equation*}
	Since $2|Re^{ix}-1|\geq |e^{ix}-1|$ for all $R,x\geq0$, 
	\begin{equation*}
		G_{1}\leq 4(\sin\eta)^{m+1}\int_{\frac{\pi}{2}-\eta}^{\pi}\frac{dx}{|e^{ix}-1|}=2\sqrt{2}(\sin\eta)^{m+1}\int_{\frac{\pi}{2}-\eta}^{\pi}\frac{dx}{\sqrt{1-\cos x}}.
	\end{equation*}
	Since $\sqrt{2}\log\tan\frac{x}{4}$ is a primitive of $\frac{1}{\sqrt{1-\cos x}}$ for $x\in(0,\pi)$, we derive
	\begin{equation*}
		G_{1}\leq-4(\sin\eta)^{m+1}\log\tan\frac{\frac{\pi}{2}-\eta}{4} \leq4(\sin\eta)^{m+1}\log\tfrac{4}{\frac{\pi}{2}-\eta}, 
	\end{equation*}
	where in the last step we used that $\tan x\geq x$ for $x\in[0,\frac{\pi}{4}]$, which follows from the Taylor series of $\tan$. Since $\sin x\leq x$ for all $x\geq0$, we finally get
	\begin{equation}\label{eq:estG1}
	G_{1}\leq 4(\sin\eta)^{m+1}\log\frac{4}{\cos\eta}.
	\end{equation}
	\newline
	To estimate $G_{2}$, note that $|1-te^{i\eta}|^{2}\leq(1-t\cos\eta)$ for $t\in[0,\cos\eta]$ and thus,
	\begin{align*}
	G_{2}={}&\int_{\Gamma_{2}}\frac{|z|^{m}|dz|}{|z-1|}=2\int_{r}^{\cos\eta}|1-te^{i\eta}|^{m}\ \frac{dt}{t}
	\leq2\int_{r}^{\cos\eta}|1-t\cos\eta|^{\frac{m}{2}}\ \frac{dt}{t}
	\end{align*}
	Since $1-x\leq e^{-x}$ and $e^{-\frac{x}{2}}e^{\frac{1}{2}}\geq1$ for $x\in[0,1]$,
	\begin{align*}
	G_{2}\leq{}&2e^{\frac{1}{2}}\int_{r\cos\eta}^{\cos^{2}\eta}e^{-x\frac{m}{2}-\frac{x}{2}}\frac{dx}{x} \leq 2e^{\frac{1}{2}}\int_{r\frac{m+1}{2}\cos\eta}^{\infty}\frac{e^{-x}}{x}\ dx=4{\rm Ei}\left(r\tfrac{m+1}{2}\cos\eta\right).
	\end{align*}
	Finally, $G_{3}$ can be estimated by
	\begin{equation}\label{eq:estG3}
		G_{3}=2\int_{0}^{\pi-\eta}|1+re^{i\delta}|^{m}\ d\delta\leq2\pi(1+r)^{m}.
	\end{equation}
	This shows (\ref{eq:lem2}) for $r<\cos\eta$.
	
	\smallskip
	If $r\geq\cos\eta $, then $\partial\Omega_{\eta,r}=\partial(B_{\sin\eta}(0)\cup B_{r}(1))$. Hence, we choose $\Gamma_{1}$ and $\Gamma_{3}$ to be the convenient parts of the circles $\partial B_{\sin\eta }(0)$, $\partial B_{r}(1)$ such that $\Gamma_{1}\cup\Gamma_{3}=\partial\Omega_{\eta,r}$, i.e. 
	$\Gamma_{1}=	\left\{\sin\eta  e^{i\delta},|\delta|\in(\alpha ,\pi]\right\}$ and $\Gamma_{3}=\left\{1+re^{i\delta}, |\delta|\in[0,\pi-\beta )\right\}$ for  certain angles $\alpha$, $\beta$ depending on $\eta$.
	Since $r\geq\cos\eta$ it is easy to see that $\alpha>\frac{\pi}{2}-\eta$ and hence, we can estimate similarly as in (\ref{eq:estG1}) and (\ref{eq:estG3}),
	\begin{equation*}
	G(m,\eta,r)=\int_{\Gamma_{1}}+\int_{\Gamma_{3}}\leq G_{1}+2\pi(1+r)^{m}\leq4(\sin\eta)^{m+1}\log\tfrac{4}{\cos\eta}+2\pi(1+r)^{m},
	\end{equation*}
	which concludes the proof as the right hand side is smaller than $\mathcal{C}(r,m,\eta)$.
	\end{proof}
	\begin{theorem}\label{thm1}
	Let $T$ be a Tadmor--Ritt operator on $X$. Let $\theta=\arccos\frac{1}{C(T)}$.
	Then, for $m\in\N_{0}$, $r\in(0,1)$ and $\eta\in(\theta,\frac{\pi}{2})$ we have, with $\tau_{m}(z)=z^{m}$, that
	\begin{equation}\label{eq:thm1}
	\| (f\cdot\tau_{m})(T) \| \leq c(T,m,r,\eta) \cdot \| f \|_{\infty,\Omega_{\eta,r}}
	\end{equation}	
	for $f\in H^{\infty}(\Omega_{\eta,r})$. Here, 
	\begin{equation*}
		c(T,m,r,\eta)\leq \frac{C_{\eta}(T)}{2\pi} \mathcal{C}(r,m,\eta) 
	\end{equation*}
	where $C_{\eta}(T)=\sup_{z\in\C\setminus\overline{\mathcal{B}_{\eta}}} \|(z-1)R(z,T)\|$, and $\mathcal{C}$ as in Lemma \ref{lem2}.
\end{theorem}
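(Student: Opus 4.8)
The plan is to estimate the Riesz--Dunford integral \eqref{eq:RieszDunford} for the function $f\cdot\tau_m$, choosing as contour $\Gamma=\partial\Omega_{\eta,r}$. Since $f\cdot\tau_m$ is holomorphic on $\Omega_{\eta,r}$ and $\Omega_{\eta,r}\supset\overline{\mathcal{B}_\theta}\supset\sigma(T)$ (the first inclusion because $\eta>\theta$, and the ``keyhole'' $B_r(1)$ only enlarges the region), the contour $\partial\Omega_{\eta,r}$ is a legitimate choice for the functional calculus integral, possibly after a standard approximation/deformation argument so that $\Gamma$ lies strictly inside $\Omega_{\eta,r}$; I would either invoke the homomorphism property discussed in Remark \ref{rem21} or simply note that one may replace $\eta$ by a slightly larger $\eta'<\tfrac\pi2$ and $r$ by a slightly larger $r'$ and pass to the limit, using that $C_{\eta'}(T)\to C_\eta(T)$ and $\mathcal{C}(r',m,\eta')\to\mathcal{C}(r,m,\eta)$.

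The core computation is then the straightforward bound
\begin{equation*}
\|(f\cdot\tau_m)(T)\|=\left\|\frac{1}{2\pi i}\int_{\partial\Omega_{\eta,r}}f(z)z^m R(z,T)\,dz\right\|\le\frac{1}{2\pi}\int_{\partial\Omega_{\eta,r}}|f(z)|\,|z|^m\,\|R(z,T)\|\,|dz|.
\end{equation*}
On $\partial\Omega_{\eta,r}$ we have $z\in\C\setminus\overline{\mathcal{B}_\eta}$ (the boundary of the keyhole lies outside $\mathcal{B}_\eta$ except on the arc near $1$, where it also lies in the resolvent set since $|z-1|=r>0$ keeps us away from the spectrum and inside the region where \eqref{eq:TadmorRitttheta} applies — more precisely $\partial\Omega_{\eta,r}\subset\C\setminus\overline{\mathcal{B}_\eta}$ is immediate from $\Omega_{\eta,r}=\mathcal{B}_\eta\cup B_r(1)$ so that its boundary meets $\overline{\mathcal B_\eta}$ only along pieces that are actually interior to $\Omega_{\eta,r}$; one then shrinks to $\eta',r'$ as above to be safely in $\C\setminus\overline{\mathcal B_{\eta'}}$). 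Hence by Lemma \ref{Lemma1}\ref{Lemma1it2}, $\|R(z,T)\|\le C_\eta(T)/|z-1|$ on the contour. Substituting and pulling $\|f\|_{\infty,\Omega_{\eta,r}}$ and $C_\eta(T)$ out of the integral leaves
\begin{equation*}
\|(f\cdot\tau_m)(T)\|\le\frac{C_\eta(T)}{2\pi}\,\|f\|_{\infty,\Omega_{\eta,r}}\int_{\partial\Omega_{\eta,r}}\frac{|z|^m}{|z-1|}\,|dz|=\frac{C_\eta(T)}{2\pi}\,\mathcal{C}(r,m,\eta)\,\|f\|_{\infty,\Omega_{\eta,r}}
\end{equation*}
after invoking Lemma \ref{lem2} for the remaining integral $G(m,\eta,r)$. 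This gives exactly the claimed bound on $c(T,m,r,\eta)$.

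The only genuinely delicate point is the contour justification: one must make sure the integral \eqref{eq:RieszDunford} really can be taken over $\partial\Omega_{\eta,r}$ rather than over a smooth contour strictly inside $\Omega_{\eta,r}$, and that the resolvent estimate \eqref{eq:TadmorRitttheta} is valid at every point of that contour. I expect this to be handled cleanly by the $\eta\mapsto\eta'$, $r\mapsto r'$ enlargement-and-limit trick together with continuity of both sides of \eqref{eq:thm1} in these parameters (the left side because $f\cdot\tau_m$ extends holomorphically to the larger region and Cauchy's theorem makes the integral independent of the chosen contour, the right side by inspection of $\mathcal{C}$ and $C_{\eta'}(T)$). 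Everything else is the routine $ML$-estimate above combined with Lemma \ref{lem2}, so the proof should be short.
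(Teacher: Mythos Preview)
Your core argument --- Riesz--Dunford integral for $f\tau_m$, the $ML$-estimate with $\|R(z,T)\|\le C_\eta(T)/|z-1|$, and Lemma~\ref{lem2} for the remaining contour integral --- is exactly the paper's approach. The only issue is the direction of your limiting argument.

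You propose to ``replace $\eta$ by a slightly larger $\eta'<\tfrac\pi2$ and $r$ by a slightly larger $r'$'' and justify this by claiming that ``$f\cdot\tau_m$ extends holomorphically to the larger region''. This last claim is false: the hypothesis is only $f\in H^\infty(\Omega_{\eta,r})$, and a generic such $f$ does not extend to $\Omega_{\eta',r'}$ when $\eta'>\eta$ or $r'>r$. Consequently the Riesz--Dunford integral over $\partial\Omega_{\eta',r'}$ is not available, and neither is the bound $\|f\|_{\infty,\Omega_{\eta',r'}}$.

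The paper does the opposite: it takes $\tilde\eta\in(\theta,\eta)$ and $\tilde r\in(0,r)$, so that $\partial\Omega_{\tilde\eta,\tilde r}$ lies \emph{inside} $\Omega_{\eta,r}$, where $f$ is holomorphic and bounded by $\|f\|_{\infty,\Omega_{\eta,r}}$. On this smaller contour the resolvent estimate with constant $C_{\tilde\eta}(T)$ applies, and Lemma~\ref{lem2} gives the factor $\mathcal C(\tilde r,m,\tilde\eta)$. One then lets $(\tilde\eta,\tilde r)\to(\eta,r)$ from below, using continuity of $\mathcal C$ and the maximum principle (applied to $z\mapsto\langle x',(z-1)R(z,T)x\rangle$) to conclude $C_{\tilde\eta}(T)\to C_\eta(T)$. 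With this correction your proof is complete and coincides with the paper's.
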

\begin{proof}
	Let $\eta\in(\theta,\frac{\pi}{2})$ and $r>0$. By Lemma \ref{Lemma1} we know that $\sigma(T)\subset\Omega_{\eta,r}$. 
	Let $f\in H(\Omega_{\eta,r})$. 
	Since $f\tau_{m}$ is holomorphic on $\Omega_{\eta,r}$,
	\begin{equation*}
		 (f\tau_{m})(T)   = \frac{1}{2\pi i} \int_{\partial \Omega_{\tilde{\eta},\tilde{r}}} f(z) z^{m} R(z,T) \ dz,
	\end{equation*}
	where $\tilde{\eta} \in(\theta,\eta)$ and $\tilde{r}\in(0,r)$.
	Since $\Omega_{\tilde{\eta},\tilde{r}}\subset\Omega_{\eta,r}$, 
	\begin{align*}
		\| (f\tau_{m})(T)  \| \leq {}&\frac{C_{\tilde{\eta} }(T)}{2\pi}\ \|f\|_{\infty,\Omega_{\eta,r}}  \ \int_{\partial \Omega_{\tilde{\eta},\tilde{r}}} \frac{|z|^{m}}{ |z-1|} \ dz\\
		\leq{}&\frac{C_{\tilde{\eta} }(T)}{2\pi}\ \|f\|_{\infty,\Omega_{\eta,r}}\cdot \mathcal{C}(\tilde{r},m,\tilde{\eta}).
	\end{align*}
	 The last inequality followed by Lemma \ref{lem2}. Letting $(\tilde{\eta} ,\tilde{r})\to(\eta,r)$ yields that $C_{\tilde\eta}(T)\allowbreak\to C_{\eta}(T)$ by the maximum principle (applied to $z\mapsto \langle x', zR(z,T)x\rangle$ for $x\in X$, $x'\in X'$) and that $\mathcal{C}(\tilde{r},m,\tilde{\eta})\to \mathcal{C}(r,m,\eta)$ since $\mathcal{C}$ is continuous (see Lemma \ref{lem2}). Together, this gives the assertion.
\end{proof}
The following inequality is a direct consequence of the maximum principle. The disc case ($\eta=\frac{\pi}{2}$) can be traced back to S.~Bernstein, and can be found in \cite[p.~346]{Riesz1916}, or \cite[Problem III. 269, p.137]{PolyaSzegoBook}.
\begin{lemma}\label{lem:Bernstein}
Let $\mathcal{B}_{\alpha}$, $\alpha\in(0,\frac{\pi}{2}]$, be the Stolz type domain defined in Sec.\ \ref{sec:PropTR}. 
The following assertions hold.
\begin{enumerate}[label={(\roman*)},ref={\thelemma~(\roman*)}]
	\item\label{it1:Bernstein} For a polynomial $p$ of degree $n$, and $r\geq1$,
	\begin{equation}\label{eq:Bernstein1}
		\|p\|_{\infty,r\mathcal{B}_{\alpha}} \leq  \left(\frac{r}{\sin\alpha}\right)^{n}\cdot \|p\|_{\infty,\mathcal{B}_{\alpha}}.
	\end{equation}
	\item\label{it2:Bernstein}
	 For $f\in H(\mathcal{B}_{\alpha})$ and continuous on $\overline{\mathcal{B}_{\alpha}}$, $m\in\N$ and $\tau_{m}(z)=z^m$,
	\begin{equation}\label{eq:Bernstein2}
		\|f\cdot\tau_{m}\|_{\infty,\mathcal{B}_{\alpha}}\leq\|f\|_{\infty,\mathcal{B}_{\alpha}}\leq \frac{1}{(\sin\alpha)^{m}}\ \|f\cdot\tau_{m}\|_{\infty,\mathcal{B}_{\alpha}}.
	\end{equation}
\end{enumerate}
\end{lemma}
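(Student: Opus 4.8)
The plan is to deduce both inequalities from two elementary facts about the Stolz type domain. Since $\mathcal{B}_\alpha$ is the interior of the convex hull of $\{1\}$ and $\overline{B_{\sin\alpha}(0)}$, every point of $\overline{\mathcal{B}_\alpha}$ can be written as a convex combination $t\cdot 1+(1-t)w$ with $t\in[0,1]$ and $|w|\le\sin\alpha$, and therefore has modulus at most $t+(1-t)\sin\alpha\le 1$; being open and contained in $\overline{\D}$, the set $\mathcal{B}_\alpha$ is in fact contained in $\D$, so $|z|<1$ for $z\in\mathcal{B}_\alpha$. On the other hand $B_{\sin\alpha}(0)\subseteq\mathcal{B}_\alpha$, hence $|z|\ge\sin\alpha$ for $z\in\partial\mathcal{B}_\alpha$. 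Finally, since $p$ and $f\cdot\tau_m$ are continuous on the compact set $\overline{\mathcal{B}_\alpha}$, the maximum modulus principle gives $\|p\|_{\infty,\mathcal{B}_\alpha}=\sup_{\partial\mathcal{B}_\alpha}|p|$ and $\|f\|_{\infty,\mathcal{B}_\alpha}=\sup_{\partial\mathcal{B}_\alpha}|f|$.

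For \ref{it2:Bernstein} I would argue as follows. The first inequality in \eqref{eq:Bernstein2} is immediate from $|z^m f(z)|\le|f(z)|$ on $\mathcal{B}_\alpha$. For the second one it suffices to estimate on the boundary: if $z\in\partial\mathcal{B}_\alpha$ then $|z|\ge\sin\alpha$, so $|f(z)|=|z^m f(z)|\,|z|^{-m}\le(\sin\alpha)^{-m}\,\|f\cdot\tau_m\|_{\infty,\mathcal{B}_\alpha}$; passing to the supremum over $\partial\mathcal{B}_\alpha$ and using $\|f\|_{\infty,\mathcal{B}_\alpha}=\sup_{\partial\mathcal{B}_\alpha}|f|$ finishes it.

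For \ref{it1:Bernstein} the key step is to apply the maximum modulus principle to $g(z):=p(z)z^{-n}$ on the region $\widehat{\C}\setminus\overline{\mathcal{B}_\alpha}$, where $\widehat{\C}$ denotes the Riemann sphere. As $0\in\mathcal{B}_\alpha$, the function $g$ is holomorphic on this region, it extends holomorphically across $\infty$ (with value there the leading coefficient of $p$, since $w\mapsto w^{n}p(1/w)$ is a polynomial), and it is continuous up to $\partial\mathcal{B}_\alpha$. Hence, for every $z\notin\overline{\mathcal{B}_\alpha}$,
\[
\frac{|p(z)|}{|z|^{n}}\ \le\ \sup_{\zeta\in\partial\mathcal{B}_\alpha}\frac{|p(\zeta)|}{|\zeta|^{n}}\ \le\ (\sin\alpha)^{-n}\sup_{\partial\mathcal{B}_\alpha}|p|\ =\ (\sin\alpha)^{-n}\,\|p\|_{\infty,\mathcal{B}_\alpha}.
\]
Now fix $z\in r\mathcal{B}_\alpha$. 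If $z\in\overline{\mathcal{B}_\alpha}$, then $|p(z)|\le\|p\|_{\infty,\mathcal{B}_\alpha}\le(r/\sin\alpha)^{n}\|p\|_{\infty,\mathcal{B}_\alpha}$ because $r\ge 1\ge\sin\alpha$. Otherwise $z\notin\overline{\mathcal{B}_\alpha}$, so $|p(z)|\le(|z|/\sin\alpha)^{n}\|p\|_{\infty,\mathcal{B}_\alpha}$, and $z\in r\mathcal{B}_\alpha\subseteq r\D$ forces $|z|<r$, whence again $|p(z)|\le(r/\sin\alpha)^{n}\|p\|_{\infty,\mathcal{B}_\alpha}$. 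Taking the supremum over $z\in r\mathcal{B}_\alpha$ yields \eqref{eq:Bernstein1}.

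The only point requiring care is the appeal to the maximum modulus principle on an unbounded region in \ref{it1:Bernstein}: one has to justify that $g$ is genuinely holomorphic at $\infty$. This is handled cleanest via the substitution $w=1/z$, which turns $g$ into the polynomial $w\mapsto w^{n}p(1/w)$ on the bounded open set $\{w:1/w\notin\overline{\mathcal{B}_\alpha}\}\cup\{0\}$, whose boundary is $\{1/\zeta:\zeta\in\partial\mathcal{B}_\alpha\}$, so that the classical maximum modulus principle applies verbatim. Everything else is bookkeeping with the inclusions $B_{\sin\alpha}(0)\subseteq\mathcal{B}_\alpha\subseteq\D$.
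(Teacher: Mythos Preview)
Your proof is correct and follows essentially the same route as the paper: both apply the maximum modulus principle to $p(z)z^{-n}$ on the exterior of $\mathcal{B}_\alpha$ (using analyticity at $\infty$) together with $|z|\ge\sin\alpha$ on $\partial\mathcal{B}_\alpha$, and part~(ii) is handled identically. The only cosmetic difference is in the last step of~(i): the paper bounds $|p|$ on $\partial(r\mathcal{B}_\alpha)$ and invokes the maximum principle once more, whereas you do a direct case split on whether $z\in\overline{\mathcal{B}_\alpha}$; your extra care with the substitution $w=1/z$ to justify holomorphy at $\infty$ is a welcome clarification.
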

\begin{proof}
The assertion is a consequence of the maximum  principle applied to $p(z)z^{-n}$. In fact, let $z\in\C\setminus\mathcal{B}_{\alpha}$.
Then, since $z\mapsto p(z)z^{-n}$ is analytic at $\infty$, by the maximum principle,
\begin{equation}\label{eq:Bernstein3}
	|p(z)z^{-n}|\leq \max_{z\in\partial\mathcal{B}_{\alpha}}|p(z)z^{-n}|\leq \max_{z\in\partial\mathcal{B}_{\alpha}}|z^{-n}|\cdot \|p\|_{\infty,\mathcal{B}_{\alpha}}.
\end{equation}
It is easy to see that $\max_{z\in\partial\mathcal{B}_{\alpha}}|z^{-1}|=\frac{1}{\sin\alpha}$. Hence, multiplying \eqref{eq:Bernstein3} by $|z|^{n}$ and noting that $|z|\leq r$ for $z\in \partial(r\mathcal{B}_{\alpha})\subset \C\setminus \mathcal{B}_{\alpha}$ yields
\begin{equation*}
|p(z)|\leq \left(\frac{r}{\sin\alpha}\right)^{n}\|p\|_{\infty,\mathcal{B}_{\alpha}},\ z\in\partial(r\mathcal{B}_{\alpha}).
\end{equation*}
Therefore, \eqref{eq:Bernstein1} follows by the maximum principle. \newline
It is easy to see that  $\sin\alpha\leq|z|$ for $z\in\partial\mathcal{B}_{\alpha}$. Therefore, by the maximum principle,
\begin{equation*}
	\|f\|_{\infty,\mathcal{B}_{\alpha}}=\sup_{z\in\partial\mathcal{B}_{\alpha}}|f(z)| \leq \frac{1}{(\sin\alpha)^{m}}\sup_{z\in\partial\mathcal{B}_{\alpha}}|z^{m}f(z)|=  \frac{1}{(\sin\alpha)^{m}}\|f\tau_{m}\|_{\infty,\mathcal{B}_{\alpha}}
\end{equation*}
The other inequality of \eqref{eq:Bernstein2} is clear as $\mathcal{B}_{\alpha}\subset\D$.
\end{proof}
\begin{theorem}\label{thm2}
	Let $T$  be a Tadmor--Ritt operator on $X$ and let $m,n\in\N$ such that $0\leq m\leq n$.
	 Then, for any $p(z)=\sum_{k=m}^{n}a_{k}z^{k}$, we have that
	\begin{equation}\label{eq:thm2}
		\| p(T) \| \leq aC(T)\left(2\log C(T)+b+\log\frac{n+1}{m+1}\right)  \cdot \| p \|_{\infty,\D},
	\end{equation}
	with absolute constants $a,b$, that can be chosen as
	\begin{equation*}
a=\tfrac{2e}{\pi(1-s)},\quad b=-2\log(s)+6, \quad s\in(0,1).
\end{equation*}	
\end{theorem}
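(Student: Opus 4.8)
The plan is to deduce Theorem~\ref{thm2} from Theorem~\ref{thm1} by making a good choice of the parameters $\eta$ and $r$ in the keyhole domain $\Omega_{\eta,r}$, together with the Bernstein-type estimate of Lemma~\ref{lem:Bernstein} to control $\|p\|_{\infty,\Omega_{\eta,r}}$ in terms of $\|p\|_{\infty,\D}$. Write $p(z)=\sum_{k=m}^{n}a_k z^k = z^m q(z)$ where $q(z)=\sum_{k=0}^{n-m}a_{k+m}z^k$ is a polynomial of degree $n-m$. Applying Theorem~\ref{thm1} with $f=q$ and the monomial factor $\tau_m$ gives $\|p(T)\|=\|(q\cdot\tau_m)(T)\|\leq \frac{C_\eta(T)}{2\pi}\,\mathcal{C}(r,m,\eta)\,\|q\|_{\infty,\Omega_{\eta,r}}$, so the task reduces to (a) estimating $\|q\|_{\infty,\Omega_{\eta,r}}$ by $\|p\|_{\infty,\D}$ and (b) optimising the product $C_\eta(T)\,\mathcal{C}(r,m,\eta)$ over admissible $\eta,r$.

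For step~(a): on $\mathcal{B}_\eta\subset\D$ one has $|q(z)|=|z|^{-m}|p(z)|\leq (\sin\eta)^{-m}\|p\|_{\infty,\D}$ by Lemma~\ref{lem:Bernstein}~(ii) (since $|z|\geq\sin\eta$ on $\overline{\mathcal{B}_\eta}$), while on the small disc $B_r(1)$ one has $|z|\le 1+r$, and using Bernstein's inequality for the polynomial $q$ of degree $n-m$ on the disc, $\|q\|_{\infty,(1+r)\D}\leq (1+r)^{n-m}\|q\|_{\infty,\D}$; combined with $\|q\|_{\infty,\D}\le\|p\|_{\infty,\D}$ this controls $\|q\|_{\infty,B_r(1)}$. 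Choosing $r=\tfrac{1}{n-m+1}$ (or $r=\tfrac1{n+1}$) keeps $(1+r)^{n-m}\le e$ bounded by an absolute constant, so that $\|q\|_{\infty,\Omega_{\eta,r}}\leq e(\sin\eta)^{-m}\|p\|_{\infty,\D}$, up to absolute constants. The factor $(\sin\eta)^{-m}$ looks dangerous, but it will be killed: from Lemma~\ref{lem2}, $\mathcal{C}(r,m,\eta)$ carries a factor $(\sin\eta)^{m+1}$ in $G_1$ and, more importantly, once $r\le\frac1{m+1}$ the bound $\mathcal{C}(r,m,\eta)\le -8\log\cos\eta-4\log(r(m+1))+2\pi(1+r)^m+12\log2$ applies — here the only $m$-dependence is through $(1+r)^m$, which is again $\le e^{r m}\le e$. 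Actually the cleaner route is to carry the $(\sin\eta)^{m+1}$ from $G_1$, the $(1+r)^m$ from $G_3$ and the $(\sin\eta)^{-m}$ from Bernstein together and check they multiply to something absolutely bounded; I expect the natural choice $\eta=\theta+\delta$ with $\cos\eta$ comparable to $\tfrac{s}{C(T)}$ (for a free parameter $s\in(0,1)$) makes everything fit.

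For step~(b): take $\eta$ so that $\frac{\cos\eta}{\cos\theta}=s$, i.e. $\cos\eta=\frac{s}{C(T)}$, which is admissible since $\eta>\theta$. Then Lemma~\ref{Lemma1}~(ii) gives $C_\eta(T)\leq\frac{C(T)}{1-s}$. Plugging $\cos\eta=s/C(T)$ and $r\le\frac1{m+1}$ into the second bound of Lemma~\ref{lem2}, the term $-8\log\cos\eta=8\log\frac{C(T)}{s}=8\log C(T)-8\log s$ produces the $\log C(T)$ contribution, the term $-4\log(r(m+1))$ becomes $4\log(m+1)$ once we substitute $r$, or rather — since we actually want $\log\frac{n+1}{m+1}$ — we should instead take $r$ around $\frac{1}{n+1}$ and keep $(1+r)^m\le e$ via $m\le n$, so that $-4\log(r(m+1))=4\log\frac{n+1}{m+1}+O(1)$ delivers exactly the logarithmic term in \eqref{eq:thm2}. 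Assembling, $\|p(T)\|\le\frac{1}{2\pi}\cdot\frac{C(T)}{1-s}\cdot\big(8\log C(T)-8\log s+4\log\frac{n+1}{m+1}+\text{abs.\ const.}\big)\cdot e\,\|p\|_{\infty,\D}$, and reading off constants (factoring $4$ out of the bracket, absorbing $8\log C(T)$ as $2\cdot 4\log C(T)$) gives $a=\frac{2e}{\pi(1-s)}$ and $b=-2\log s+6$, matching the stated constants.

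\textbf{Main obstacle.} The delicate point is bookkeeping the exponential-in-$m$ factors: Bernstein on $\mathcal{B}_\eta$ costs $(\sin\eta)^{-m}$, which blows up as $\eta\to\theta$ and $m\to\infty$, and it must be exactly cancelled by the $(\sin\eta)^{m+1}$ inside $G_1$ of $\mathcal{C}(r,m,\eta)$ and/or absorbed by choosing $r\lesssim\frac1{m+1}$ so that $(1+r)^m$ stays bounded. One has to be careful that the same $r$ simultaneously satisfies $r\le\frac1{m+1}$ (to use the simplified bound of Lemma~\ref{lem2}), keeps $(1+r)^{n-m}\le e$ in the Bernstein step for $q$, and yet is large enough that $-4\log(r(m+1))$ gives $\log\frac{n+1}{m+1}$ rather than $\log(n+1)$ — this forces the choice $r\asymp\frac1{n+1}$ and then one must recheck that $(1+r)^m\le(1+\frac1{n+1})^n\le e$, which holds precisely because $m\le n$. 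Once the parameter choices $\cos\eta=s/C(T)$ and $r=\frac{1}{n+1}$ are fixed, the rest is substitution and collecting absolute constants.
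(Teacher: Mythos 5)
Your plan reproduces the paper's proof: write $p=z^m q$ with $q$ of degree $n-m$, apply Theorem~\ref{thm1}, bound $\|q\|_{\infty,\Omega_{\eta,r}}$ via Bernstein's inequality on the disc $(1+r)\D$, and optimize by taking $\cos\eta = s/C(T)$ and $r = \tau/(n+1)$ — this is exactly what the paper does, and your step~(b) assembly of the constants $a$ and $b$ comes out right.

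However, the ``main obstacle'' you identify — cancelling a $(\sin\eta)^{-m}$ factor from Bernstein on $\mathcal{B}_\eta$ against the $(\sin\eta)^{m+1}$ in $G_1$ — is a phantom, and the proposed cancellation would in fact fail. The paper never incurs a $(\sin\eta)^{-m}$ factor: it observes that $\Omega_{\eta,r}\subset(1+r)\D$ and applies Lemma~\ref{it1:Bernstein} with $\alpha=\pi/2$ to get
\begin{equation*}
\|q\|_{\infty,\Omega_{\eta,r}} \leq \|q\|_{\infty,(1+r)\D} \leq (1+r)^{n-m}\|q\|_{\infty,\D} = (1+r)^{n-m}\|p\|_{\infty,\D} \leq e\,\|p\|_{\infty,\D}
\end{equation*}
for $r\leq\tfrac{1}{n+1}$, so only the absolute constant $e$ appears (note also $\|q\|_{\infty,\D}=\|p\|_{\infty,\D}$ by the maximum principle, not merely $\leq$). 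If you instead used Lemma~\ref{it2:Bernstein} on $\mathcal{B}_\eta$ and picked up $(\sin\eta)^{-m}$, the term $4(\sin\eta)^{m+1}\log\frac{4}{\cos\eta}$ in $\mathcal{C}(r,m,\eta)$ would indeed absorb it, but the other two terms $4\,{\rm Ei}(r\tfrac{m+1}{2}\cos\eta)$ and $2\pi(1+r)^m$ carry no compensating $(\sin\eta)^{m}$ factor; multiplying those by $(\sin\eta)^{-m}$ with $\sin\eta=\sqrt{1-s^2/C(T)^2}$ gives an uncontrolled exponential in $m$. So drop the $(\sin\eta)^{-m}$ route entirely — the disc Bernstein bound is not merely cleaner, it is the one that works, and your final computation already uses it.
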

\begin{proof}
 Let $p(z)=\sum_{k=m}^{n}a_{k}z^{k}=z^{m}p_{0}(z)$ with $0\leq m\leq n$ and $p_{0}$ is a polynomial of degree $n-m$. For $s\in(0,1]$ let $\eta(s)=\arccos\frac{s}{C(T)}$. By Theorem \ref{thm1} we have for $s,r\in(0,1)$ that
 \begin{equation}\label{eq:thm2eq1}
 \|p(T)\| \leq c(T,m,r,\eta(s))\cdot \|p_{0}\|_{\infty,\Omega_{\eta(s),r}},
 \end{equation}
 where $p(z)=z^{m}p_{0}$. Since $\Omega_{\eta(s),r}\subset (1+r)\D$, Lemma \ref{it1:Bernstein} (with $\alpha=\frac{\pi}{2}$) yields
 \begin{equation}\label{eq:thm213}
 	\|p_{0}\|_{\infty,\Omega_{\eta(s),r}}\leq\|p_{0}\|_{\infty,(1+r)\D}\leq (1+r)^{n-m}\|p_{0}\|_{\infty,\D}.
 \end{equation}
By the maximum principle, $\|p_{0}\|_{\infty,\D}=\|p\|_{\infty,\D}$. Hence, by choosing $r=\frac{\tau}{n+1}$ with $\tau\in(0,1)$, Eq.\ (\ref{eq:thm2eq1}) becomes
\begin{equation}\label{eq2:thm2}
\|p(T)\| \leq c(T,m,\tfrac{\tau}{n},\eta(s))\cdot (1+\tfrac{\tau}{n+1})^{n-m}\|p\|_{\infty,\D}.
\end{equation}
It remains to estimate the right hand side. Clearly, $(1+\tfrac{\tau}{n+1})^{n-m}\leq e$. 
Theorem \ref{thm1} yields that
\begin{align*}
c(T,m,\tfrac{\tau}{n+1},\eta(s))\leq{}&\frac{C_{\eta(s)}}{2\pi}\mathcal{C}(\tfrac{\tau}{n+1},m,\eta(s)).
\end{align*}
We can further estimate $\mathcal{C}$ using Lemma \ref{lem2}. Since $r=\frac{\tau}{n+1}\leq \frac{1}{m+1}$,
\begin{align*}
c(T,m,\tfrac{\tau}{n+1},\eta(s))\leq{}&\frac{2C_{\eta(s)}}{\pi}(-2\log\cos\eta(s) + \log\frac{n+1}{m+1} -\log\tau + \frac{\pi}{2} e^{\tau} + 3\log2).
\end{align*}
By Lemma \ref{Lemma1}, $C_{\eta(s)}(T)\leq\frac{C(T)}{1-s}$ for $s\in(0,1)$. Since $\cos\eta(s)=\frac{s}{C(T)}$, 
\begin{equation*}
c(T,m,\tfrac{\tau}{n},\eta(s))\leq \frac{2C(T)}{\pi(1-s)}(2\log C(T) - 2\log s + \log\frac{n+1}{m+1} - \log\tau +\frac{\pi}{2} e^{\tau} + 3\log 2).
\end{equation*}
As $\min_{\tau\in(0,1)}\log\frac{1}{\tau}+\frac{\pi}{2} e^{\tau}+3\log2<6$, together with (\ref{eq2:thm2}), this yields (\ref{eq:thm2}). 
\end{proof}
\begin{corollary}\label{cor1}
Let $T$ be a Tadmor--Ritt operator. Then $T$ is power-bounded,
\begin{equation*}
	\sup\nolimits_{n\in\N}\|T^{n}\| \leq aC(T)\left(2\log C(T)+b\right)
\end{equation*}
with absolute constants $a,b>0$ as in Theorem \ref{thm2}.
\end{corollary}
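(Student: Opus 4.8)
The plan is to obtain Corollary~\ref{cor1} as an immediate specialisation of Theorem~\ref{thm2}: there is nothing left to do except make the right choice of polynomial. Concretely, I would fix $n\in\N$ and apply Theorem~\ref{thm2} to the monomial $p(z)=z^{n}$, which is of the admissible form $\sum_{k=m}^{n}a_{k}z^{k}$ with $m=n$, $a_{n}=1$, and all other coefficients zero. Then $\|p\|_{\infty,\D}=\sup_{|z|\leq 1}|z|^{n}=1$ and $\log\frac{n+1}{m+1}=\log 1=0$, so \eqref{eq:thm2} reads
\begin{equation*}
\|T^{n}\|=\|p(T)\|\leq aC(T)\bigl(2\log C(T)+b\bigr),
\end{equation*}
with the absolute constants $a,b>0$ of Theorem~\ref{thm2}. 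Since the right-hand side does not depend on $n$, taking the supremum over $n\in\N$ yields the claimed power-bound.

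The only point requiring a separate word is the trivial case $n=0$, should $\N$ be taken to contain $0$: then $T^{0}=I$ and $\|I\|=1$, so one must check that the right-hand side is at least $1$. This follows from $C(T)\geq 1$ — which holds because $(z-1)R(z,T)\to I$ as $|z|\to\infty$, forcing $\sup_{|z|>1}\|(z-1)R(z,T)\|\geq 1$ — together with $a\geq \tfrac{2e}{\pi}$ and $b>6$, whence $aC(T)(2\log C(T)+b)\geq \tfrac{2e}{\pi}\cdot 6>1$. Alternatively, one can simply quote Theorem~\ref{thm2} for $n\geq 1$ and dispose of $n=0$ by hand.

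I do not expect any real obstacle here; the entire content of the corollary is already packed into Theorem~\ref{thm2}, and the argument is just the observation that the logarithmic term $\log\frac{n+1}{m+1}$ collapses to $0$ when $m=n$. It is worth noting that this choice $m=n$ is legitimate precisely because Theorem~\ref{thm2} (in contrast to Vitse's estimate \eqref{eq:Vitse1}) is formulated to allow $m=n$. The outcome is $Pb(T)\leq aC(T)(2\log C(T)+b)$, which recovers, with explicit constants, the qualitative statement of Bakaev's bound \eqref{eq:Bakaev}.
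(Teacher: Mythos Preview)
Your proposal is correct and matches the paper's approach: the corollary is stated without proof in the paper, being an immediate specialisation of Theorem~\ref{thm2} with $p(z)=z^{n}$ (so $m=n$ and the logarithmic term vanishes). Your treatment of the case $n=0$ is more careful than necessary, but harmless.
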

\begin{remark}\label{rem1}\hfill
\begin{enumerate}
\item
Theorem \ref{thm2} shows that a Tadmor--Ritt operator has a bounded $H^{\infty}[m,n]$\allowbreak-calculus, where 
\begin{equation*}
H^{\infty}[m,n]=\{p(z)=\sum\nolimits_{k=m}^{n}a_{k}z^{k}:a_{k}\in\C\}
\end{equation*}
 and $m\leq n$. With different techniques, such a result was proved by Vitse in \cite{Vitse2005b}, see also (\ref{eq:Vitse1}). However, in \cite{Vitse2005b} the bound of the calculus depends on a factor $C(T)^{5}$, whereas in our Theorem \ref{thm2}, this gets improved to a behavior of  $C(T)(\log C(T) +1)$. Moreover, Corollary \ref{cor1} shows that the same dependence holds true for the power-bound of a Tadmor--Ritt operator. This confirms the result by Bakaev \cite{Bakaev03}, which seems not so well-known, and improves the better known quadratic dependence $C(T)^{2}$, see \cite{Ransford02}, \cite{Vitse2005b}.
\item It is a natural question to ask if the $\|\cdot\|_{\infty,\D}$-norm in Theorem \ref{thm2} can be replaced by the sharper $\|\cdot\|_{\infty,\mathcal{B}_{\eta}}$-norm for some $\eta<\frac{\pi}{2}$. Indeed, Lemma \ref{lem:Bernstein} allows us to do this, see also \eqref{eq:thm213}. However, this leads to an additional factor $(\sin\eta)^{-n}$, which therefore destroys the logarithmic behavior in $\frac{n+1}{m+1}$.\newline
Let us further remark that a polynomially bounded  Tadmor--Ritt operator $T$ (see \eqref{eq:Polybdd}) on a Hilbert space implies an estimate of the form
\begin{equation*}
		\|p(T)\|\lesssim \|p\|_{\infty,\mathcal{B}_{\eta}},
\end{equation*}
for some $\eta<\frac{\pi}{2}$. In other words, $T$ \textit{allows for a bounded $H^{\infty}(\mathcal{B}_{\eta})$-calculus}. However, this is not true for general Banach spaces, see \cite{LancienLeMerdy2013}. More generally, including the Hilbert space case, if one assumes that $T$ is $R$-Ritt (see Section \ref{subsec:GeneralSQFE}), then polynomial-boundedness does indeed imply a bounded $H^{\infty}(\mathcal{B}_{\eta})$-calculus, see \cite[Proposition 7.6]{LeMerdy2014a} on arbitrary Banach spaces.
\end{enumerate}
\end{remark}
%

\section{The effect of discrete square function estimates - Hilbert space}\label{subsec:HSSQFE}
In the following we will show that discrete square function estimates improve the dependence in the way that $\log\frac{n+1}{m+1}$ in (\ref{eq:thm2}) gets replaced by its square root.

\begin{definition}[Hilbert space square function estimate]\label{Def:SQFE}
Let $T$ be a bounded operator on a Hilbert space $X$. We say that \textit{$T$ satisfies square function estimates} if there exists a $K>0$ such that 
\begin{equation}\label{eq:SQFE}
	\|x\|_{T}^{2}:=\sum_{k=1}^{\infty}k\|T^{k}x-T^{k-1}x\|^{2} \leq K^{2} \|x\|^{2},\quad \forall x\in X.
\end{equation} 
\end{definition}
Square function estimates are a well-known tool characterizing bounded $H^{\infty}$-calculi for sectorial operators, going back to McIntosh's seminal work in the 80ties \cite{mcintoshHinf}. From the 90ties on, $H^{\infty}$-calculus has proved very useful in the study of maximal regularity. In \cite{cowlingdoustmcintoshyagi} a suitable $L^{p}$-version of square function estimates was introduced which  then got further adapted to general Banach spaces by Kalton and Weis in the unpublished note \cite{KaltonWeisUnpublished}, see also \cite{KunstmannWeis04} and the references therein.
Maximal regularity for discrete-time difference equations were investigated in \cite{Blunck1,Blunck2}.
Discrete square function estimates for Tadmor--Ritt operators were studied in \cite{KaltonPortal}. We mention that in the literature there exists a whole scale of square functions, see \cite[Section 3]{LeMerdy2014a}, whereas we only use the specific form in Definition \ref{Def:SQFE}. 

As for sectorial operators, for non-Hilbert (typically, $L^{p}$-) spaces suitable square function estimates have to be redefined for Tadmor--Ritt operators using Rademacher means.
For the moment we will restrict ourselves to the Hilbert space case and leave the general Banach space case for Section \ref{subsec:GeneralSQFE}.\newline
The following characterization of bounded $H^{\infty}$ calculus for Tadmor--Ritt operators was recently proved in \cite{LeMerdy2014a}. For the rest of the section we want to emphasize that on Hilbert spaces the notions of $R$-Ritt and Tadmor operator coincide, whereas on general Banach spaces $R$-Ritt is stronger than Tadmor--Ritt. For a definition of $R$-Ritt operators and square function estimates on general Banach spaces, we refer to Section \ref{subsec:GeneralSQFE}.
\begin{theorem}[Le Merdy 2014, {\cite[Corollary 7.5]{LeMerdy2014a}}]\label{thm:bddcalc}
Let $T$ be a Tadmor--Ritt operator on a Banach space $X$. Consider the assertions
\begin{enumerate}[label=(\roman*)]
\item\label{thm:bddcalc1} $T$ is $R$-Ritt and both $T$ and $T^{*}$ satisfy square function estimates.
\item\label{thm:bddcalc2} For some $\eta\in(0,\frac{\pi}{2})$,
\begin{equation}\label{eq:bddcalc}
	\|f(T)\|\lesssim \|f\|_{\infty,\mathcal{B}_{\eta}}\quad \forall f\in H_{0}^{\infty}(\mathcal{B}_{\eta}),
\end{equation}
where $H_{0}^{\infty}(\mathcal{B}_{\eta})$ is defined in Remark \ref{rem21}.
\end{enumerate}
Then, \ref{thm:bddcalc1} $\Rightarrow$ \ref{thm:bddcalc2}. \newline If $X$ is UMD space (in particular, a Hilbert space), then \ref{thm:bddcalc2} $\Rightarrow$ \ref{thm:bddcalc1}.
\end{theorem}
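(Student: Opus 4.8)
The plan is to establish the two implications separately; each is the discrete (Tadmor--Ritt) analogue of a classical fact about sectorial operators, and I would follow McIntosh's approximation technique \cite{mcintoshHinf,cowlingdoustmcintoshyagi} and the $R$-boundedness machinery of Kalton and Weis \cite{KaltonWeisUnpublished,KunstmannWeis04}. The workhorse is the elementary \emph{discrete Calder\'{o}n reproducing formula}
\begin{equation*}
\sum_{k\geq1}k\,z^{k-1}(1-z)^{2}=1,\qquad z\in\D,
\end{equation*}
the discrete counterpart of the Calder\'{o}n identity of the sectorial theory. I describe the argument on a Hilbert space; the general Banach space case is identical once every $\ell^{2}$-sum is replaced by a Rademacher average and Cauchy--Schwarz by its Rademacher counterpart, the square function estimates then being understood in the refined sense of Section \ref{subsec:GeneralSQFE}.

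For \ref{thm:bddcalc1} $\Rightarrow$ \ref{thm:bddcalc2}, fix $\eta\in(0,\tfrac{\pi}{2})$ and $f\in H_{0}^{\infty}(\mathcal{B}_{\eta})$. The decay $|f(z)|\lesssim|1-z|^{s}$ together with $\|T^{k}-T^{k-1}\|\leq c_{1,T}/k$ (Lemma \ref{lem:charRitt}) makes $f(T)=\sum_{k\geq1}k\,(I-T)^{2}T^{k-1}f(T)$ absolutely convergent. Splitting $T^{k-1}=T^{a_{k}}T^{b_{k}}$ with $a_{k},b_{k}\approx k/2$ and pairing with $x,y\in X$, one obtains $\langle f(T)x,y\rangle=\sum_{k}\langle u_{k},v_{k}\rangle$ with $u_{k}=k^{1/2}(T^{a_{k}}-T^{a_{k}+1})f(T)x$ and $v_{k}=k^{1/2}\big((T^{*})^{b_{k}}-(T^{*})^{b_{k}+1}\big)y$, hence by Cauchy--Schwarz
\begin{equation*}
|\langle f(T)x,y\rangle|\leq\Big(\sum_{k}\|u_{k}\|^{2}\Big)^{1/2}\Big(\sum_{k}\|v_{k}\|^{2}\Big)^{1/2}.
\end{equation*}
A bounded-multiplicity reindexing bounds the second factor by $2\|y\|_{T^{*}}$, hence by $2K\|y\|$ via the square function estimate for $T^{*}$. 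The first factor is the crux, since the symbols $k^{1/2}z^{a_{k}}(1-z)f(z)$ carry the unwanted factor $f$; one removes it by expanding $f(T)$ once more with the Calder\'{o}n formula, which turns $u_{k}$ into a sum over an auxiliary scale $j$ of ``two-scale'' bumps whose symbols decay geometrically in the ratio of the two scales. The $R$-boundedness of $\{k(T^{k}-T^{k-1}):k\in\N\}$ furnished by the $R$-Ritt hypothesis, combined with Kahane's contraction principle, then allows one to sum over $j$ while preserving the $\ell^{2}$-structure, at the cost of a factor $\|f\|_{\infty,\mathcal{B}_{\eta}}$, so that $\sum_{k}\|u_{k}\|^{2}\lesssim\|f\|_{\infty,\mathcal{B}_{\eta}}^{2}\|x\|_{T}^{2}\lesssim\|f\|_{\infty,\mathcal{B}_{\eta}}^{2}\|x\|^{2}$ by the square function estimate for $T$. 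Combining the two bounds gives $\|f(T)\|\lesssim\|f\|_{\infty,\mathcal{B}_{\eta}}$. I expect this off-diagonal summation --- the removal of $\|f\|_{\infty}$ via $R$-boundedness --- to be the main technical obstacle; on a Hilbert space it collapses to a genuine orthogonality computation and plain boundedness of $\{k(T^{k}-T^{k-1})\}$ suffices.

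For \ref{thm:bddcalc2} $\Rightarrow$ \ref{thm:bddcalc1} on a UMD space, the input is the bounded $H_{0}^{\infty}(\mathcal{B}_{\eta})$-calculus. I would first extend it to a bounded $H^{\infty}(\mathcal{B}_{\eta'})$-calculus for some $\eta'\in(\eta,\tfrac{\pi}{2})$ by the usual regularisation, taking $e(z)=1-z$ as regulariser --- legitimate because the Tadmor--Ritt condition controls $R(z,T)$ near the only spectral point of $T$ on $\T$, namely $1$ --- so that in particular the resolvent symbols $z\mapsto(\lambda-z)^{-1}$, $\lambda\notin\overline{\mathcal{B}_{\eta'}}$, belong to the calculus. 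The two conclusions then come from the UMD property: (a) on a UMD space a bounded $H^{\infty}$-calculus upgrades to an \emph{$R$-bounded} one, i.e.\ $\{f(T):\|f\|_{\infty,\mathcal{B}_{\eta'}}\leq1\}$ is $R$-bounded; applied to $f(z)=(\lambda-1)(\lambda-z)^{-1}$ this shows that $T$ is $R$-Ritt; (b) a bounded $H^{\infty}$-calculus forces the square functions to be finite --- the discrete analogue of McIntosh's theorem --- by a randomisation/Fubini argument relying on the $K$-convexity inherent in UMD; applying this to $T$ and, since the calculus for $T^{*}$ is bounded with the same constant, to $T^{*}$ gives both square function estimates.

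Finally, I would note the alternative route that handles both implications at once: passing through the Cayley transform $A=(I-T)(I+T)^{-1}$, under which $R$-Ritt corresponds to $R$-sectoriality, the discrete square functions to the continuous ones, and a bounded $H^{\infty}(\mathcal{B}_{\eta})$-calculus to a bounded $H^{\infty}(\Sigma_{\omega})$-calculus, so that the statement reduces to its known sectorial analogue \cite{KunstmannWeis04}. I would nonetheless prefer the direct discrete argument, as in \cite{LeMerdy2014a}, since it keeps the constants transparent and avoids the technicalities of the transfer.
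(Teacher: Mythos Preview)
The paper does not supply a proof of this theorem: it is quoted verbatim from Le Merdy \cite[Corollary 7.5]{LeMerdy2014a}. The only related content in the paper is Theorem~\ref{thm:LeMerdy}, again stated without proof and attributed to \cite[Proof of Theorem 7.3]{LeMerdy2014a}, which isolates the duality inequality $|\langle y,p(T)x\rangle|\leq c\,\|p\|_{\infty,\mathcal{B}_{\eta}}\,\|x\|_{T}\,\|y\|_{T^{*}}$; combined with the two square-function hypotheses this gives \ref{thm:bddcalc1}~$\Rightarrow$~\ref{thm:bddcalc2} in one line. Your sketch for that implication is precisely the argument underlying Theorem~\ref{thm:LeMerdy}: the discrete Calder\'on identity $\sum_{k\geq1}k\,z^{k-1}(1-z)^{2}=1$, the symmetric splitting of $T^{k-1}$, Cauchy--Schwarz (resp.\ its Rademacher version), and the off-diagonal summation handled via $R$-boundedness are exactly Le Merdy's steps, so your plan coincides with the cited source.

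For \ref{thm:bddcalc2}~$\Rightarrow$~\ref{thm:bddcalc1} the paper gives no indication whatsoever, so there is nothing to compare against. Your outline is sound in broad strokes, but two points deserve a flag. First, the regulariser $e(z)=1-z$ need not yield an injective $e(T)$: $1$ can be an eigenvalue of a Tadmor--Ritt operator (already $T=I$), so one has to split off $\ker(I-T)$ via the mean-ergodic projection before regularising, as Le Merdy in fact does. Second, the passage ``bounded $H^{\infty}$-calculus $\Rightarrow$ $R$-bounded $H^{\infty}$-calculus on UMD spaces'' is the substantive step and is not a consequence of $K$-convexity alone; in the sectorial setting it goes through an integral representation and a UMD-valued multiplier theorem (Kalton--Weis), and the discrete version needs an analogous device. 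The Cayley-transform route you mention is the cleanest way to import both facts from the sectorial theory, and is a legitimate alternative.
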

The assumption on (geometry of) the Banach space for the direction \ref{thm:bddcalc1} to \ref{thm:bddcalc2} can be further generalized to  $X$ \textit{having property $(\Delta)$}, see \cite{LeMerdy2014a,KaltonWeis01}.
In \cite[Proposition 8.1]{LeMerdy2014a} it is further shown that there exist Tadmor--Ritt operators (even on Hilbert spaces) such that (only) $T$ satisfies square function estimates, but (\ref{eq:bddcalc}) does not hold.
However, we will see that having square function estimates for $T$ (or $T^{*}$) does improve the functional calculus estimate in Theorem \ref{thm2}.
Note that for a Tadmor--Ritt operator $T$ of type $\theta$ and $r\in(0,1)$, $rT$ is again Tadmor--Ritt with
\begin{equation*}
C(rT)=\sup_{|\lambda|>1}\left\|(\lambda-1)\tfrac{1}{r}\left(\tfrac{\lambda}{r}-T\right)^{-1}\right\|\leq C(T) \sup_{|\lambda|>1}\left|\frac{\lambda-1}{\lambda-r}\right|=\frac{2C(T)}{1+r}.
\end{equation*}
We remark that moreover $\lim_{r\nearrow1}f(rT)=f(T)$ for $f\in H_{0}^{\infty}(\mathcal{B}_{\eta})$ with $\eta\in(\theta,\frac{\pi}{2})$, see \cite[Lemma 2.3]{LeMerdy2014a}.
\begin{lemma}\label{lem:SFQE}
Let $T$ be a Tadmor--Ritt operator on a Hilbert space $X$. For $m\in\N\cup\left\{0\right\}$, $r\in(0,1)$,
\begin{equation}\label{eq:rTTR}
	\|(rT)^{m}x\|_{rT} \leq a r^{m} \sqrt{b+\log\left(1-\frac{1}{2(m+1)\log r}\right)} \ \|x\|\quad \forall x\in X,
\end{equation}
with $a=\sqrt{2}c_{1,T}$ and $b=1+\frac{Pb(T)^{2}}{c_{1,T}^{2}}$, where $c_{1,T}$ and $Pb(T)$ are defined in \eqref{eq1:lemcharRitt}.
\end{lemma}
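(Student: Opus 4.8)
The plan is to expand the square function norm $\|(rT)^m x\|_{rT}^2 = \sum_{k=1}^\infty k \|(rT)^k (rT)^m x - (rT)^{k-1}(rT)^m x\|^2$ directly and split it at an index where the two natural bounds on the summand balance. Writing the $k$-th term as $k r^{2(k+m)} \|T^{k+m} x - T^{k+m-1} x\|^2$ (being slightly careful about the $k=1$ versus general $k$ bookkeeping, and about the factor coming from differentiating $z^{k+m}$), I would use two estimates for $\|T^{k+m}x - T^{k+m-1}x\|$: the crude one $\|T^{k+m}x - T^{k+m-1}x\| \le \tfrac{2\,Pb(T)}{k+m}\|x\|$ coming from $c_{1,T}$ (or just $\le 2 Pb(T)\|x\|$), and for the tail the genuine square-function decay encoded in $c_{1,T} = \sup_n \|n(T^n - T^{n-1})\|$. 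The geometric weight $r^{2k}$ (with $r<1$) is what makes the series converge; the point of the lemma is to track the resulting constant and exhibit the $\sqrt{\log(\cdot)}$ behaviour in $m$.

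Concretely, I would bound $\sum_{k\ge1} k r^{2(k+m)} \|T^{k+m}x - T^{k+m-1}x\|^2$ by $r^{2m}\|x\|^2 \sum_{k\ge1} k r^{2k} \min\!\big(4\,Pb(T)^2,\ \tfrac{4 c_{1,T}^2}{(k+m)^2}\big)$. Using $\tfrac{k}{(k+m)^2} \le \tfrac{1}{k+m} \le \tfrac{1}{m+1}$ for the "small-$k$" part is too lossy; the better move is to note $\sum_{k\ge1} \tfrac{k}{(k+m)^2} r^{2k}$, and compare it to an integral. The key elementary estimate is
\begin{equation*}
\sum_{k=1}^\infty \frac{k\,r^{2k}}{(k+m)^2} \le \sum_{k=1}^\infty \frac{r^{2k}}{k+m} \le -\log(1-r^2) \le -\log\!\big(1 - \tfrac{1}{2(m+1)\log r}\big) + (\text{const}),
\end{equation*}
wait — one must be more careful: $-\log(1-r^2)$ does not by itself produce the $(m+1)$ in the statement. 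The correct route is to split the sum at $k_0 \approx \tfrac{1}{2}\log\tfrac{1}{r}$ (equivalently where $r^{2k}\approx e^{-1}$): for $k \le k_0$ use $r^{2k}\le 1$ and sum $\sum_{k\le k_0} \tfrac{k}{(k+m)^2} \le \sum_{k\le k_0}\tfrac{1}{k+m} \le \log\tfrac{k_0+m+1}{m+1} \le \log\!\big(1 + \tfrac{k_0}{m+1}\big)$, and since $k_0 \asymp -\tfrac{1}{2\log r} = \tfrac{1}{2\log(1/r)}$ this gives exactly the $\log\big(1 - \tfrac{1}{2(m+1)\log r}\big)$-type term (note $\log r < 0$, so $-\tfrac{1}{2(m+1)\log r} > 0$); for $k > k_0$ use instead $\tfrac{k}{(k+m)^2}\le \tfrac1k$ together with the geometric decay $\sum_{k>k_0} r^{2k} \le \tfrac{r^{2k_0}}{1-r^2} \le \tfrac{e^{-1}}{1-r^2}$, which contributes only an absolute constant after bounding $\tfrac{1}{1-r^2}$ against $\tfrac{1}{k_0}$-type quantities. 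Collecting the $Pb(T)^2$ contribution from the first few terms and the $c_{1,T}^2$ contribution from the rest, factoring out $c_{1,T}^2$, yields the constants $a = \sqrt2\, c_{1,T}$ and $b = 1 + Pb(T)^2/c_{1,T}^2$ after taking square roots and using $\sqrt{u+v}\le\sqrt u+\sqrt v$ is avoided by keeping everything under one root: $\|(rT)^m x\|_{rT}^2 \le 2 c_{1,T}^2 r^{2m}\big(b + \log(\cdots)\big)\|x\|^2$.

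The main obstacle I anticipate is purely bookkeeping: getting the split point $k_0$ to produce precisely the argument $1 - \tfrac{1}{2(m+1)\log r}$ inside the logarithm (rather than some equivalent expression) and absorbing all the stray absolute constants and the $Pb(T)^2$ terms cleanly into $b = 1 + Pb(T)^2/c_{1,T}^2$ without an extra multiplicative constant beyond $a=\sqrt2 c_{1,T}$. I would also need to handle the endpoint behaviour: as $r\nearrow 1$ the logarithm blows up (consistent with $T$ merely satisfying square function estimates, not a bounded calculus), and as $m\to\infty$ with $r$ fixed the argument tends to $1$, so the bound degenerates to the plain square-function estimate $\|(rT)^m x\|_{rT}\le (\text{const}) r^m\|x\|$ — a useful sanity check. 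Everything else (summation by parts to relate $\sum k\|T^k x - T^{k-1}x\|^2$-type quantities, the telescoping identity $T^{k+m} - T^{k+m-1}$, and the elementary inequalities for $\sum r^{2k}/(k+m)$) is routine.
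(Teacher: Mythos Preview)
There is a genuine gap at the very first step. You write the $k$-th summand as $k\,r^{2(k+m)}\|T^{k+m}x-T^{k+m-1}x\|^{2}$, but $(rT)^{k+m}-(rT)^{k+m-1}\neq r^{k+m}(T^{k+m}-T^{k+m-1})$ because the two powers of $r$ differ. The correct algebra is
\[
(rT)^{k+m}-(rT)^{k+m-1}=r^{k+m-1}\bigl[(T^{k+m}-T^{k+m-1})-(1-r)T^{k+m}\bigr],
\]
and after squaring (using $(u+v)^{2}\le 2u^{2}+2v^{2}$) one obtains two sums:
\[
2c_{1,T}^{2}\sum_{k\ge1}\frac{k\,r^{2(k-1)}}{(k+m)^{2}}\quad\text{and}\quad 2(1-r)^{2}Pb(T)^{2}\sum_{k\ge1}k\,r^{2(k-1)}.
\]
This is exactly where $Pb(T)$ enters, and the factor $(1-r)^{2}$ is essential: the second sum equals $(1-r^{2})^{-2}$, so the product collapses to $2Pb(T)^{2}/(1+r)^{2}\le 2Pb(T)^{2}$, which accounts precisely for the additive constant $Pb(T)^{2}/c_{1,T}^{2}$ inside $b$. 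Your plan to extract a $Pb(T)^{2}$ contribution from ``the first few terms'' via the crude bound $\|T^{k+m}x-T^{k+m-1}x\|\le 2Pb(T)\|x\|$ is both unnecessary (once the split above is made) and not forced by your incorrect expression---with that expression the $c_{1,T}$ bound alone would suffice, and $Pb(T)$ would never appear.

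For the $c_{1,T}$-sum, your idea of cutting at $k_{0}\approx -\tfrac{1}{2\log r}$ is workable, but the paper's route lands on the stated constants without tuning: use $\tfrac{k}{(k+m)^{2}}\le\tfrac{1}{k+m}$, shift the index, and compare the resulting series $\sum_{k\ge0}\tfrac{r^{2k}}{k+m+1}$ to $\tfrac{1}{m+1}+\int_{0}^{\infty}\tfrac{e^{2x\log r}}{x+m+1}\,dx$. The integral is $r^{-2(m+1)}{\rm Ei}(-2(m+1)\log r)$, and the elementary bound ${\rm Ei}(s)<e^{-s}\log(1+\tfrac{1}{s})$ yields exactly $\log\bigl(1-\tfrac{1}{2(m+1)\log r}\bigr)$; the leftover $\tfrac{1}{m+1}\le1$ is absorbed into $b$. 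No splitting point, no stray constants to chase.
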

\begin{proof}
Clearly, $rT$ is a Tadmor--Ritt operator. 
By definition,
\begin{align}
\|(rT)^{m}x\|_{rT}^{2}={}&r^{2m}\sum_{k=1}^{\infty} k\|r^{k}T^{k+m}x-r^{k-1}T^{k-1+m}x\|^{2}\notag\\
		\leq{}&r^{2m}\sum_{k=1}^{\infty}kr^{2(k-1)}\left(2\|T^{k+m}x-T^{k-1+m}x\|^{2}+2\|(1-r)T^{k+m}x\|^{2}\right)\notag\\
		\leq{}&r^{2m}\sum_{k=1}^{\infty}kr^{2(k-1)}\left(\frac{2c_{1,T}^{2}}{(k+m)^{2}}+2(1-r)^{2}Pb(T)^{2}\right)\|x\|^{2},\label{eq2:lemSFQE}
\end{align}
where $c_{1,T}=\sup_{n\in\N}\|n(T^{n}-T^{n-1})\|$ which is finite by Lemma \ref{lem:charRitt}. Since $\frac{k}{(k+m)^{2}}\leq\frac{1}{k+m}$,
\begin{align}
		\sum_{k=1}^{\infty}\frac{kr^{2(k-1)}}{(k+m)^{2}}\leq{}&\sum_{k=0}^{\infty}\frac{r^{2k}}{k+1+m}\notag\\
					\leq {}& \frac{1}{m+1}+\int_{0}^{\infty}\frac{e^{2x\log r}}{x+1+m}\ dx\notag\\
					={}&\frac{1}{m+1}+ r^{-2(m+1)}{\rm Ei}(-2(m+1)\log r)\notag\\
					\leq{}&\frac{1}{m+1}+\log\left(1-\frac{1}{2(m+1)\log r}\right),\label{eq3:lemSFQE}
\end{align}
where the last step follows by \eqref{est:Ei}. Using this and the fact that $\sum_{k=1}^{\infty}kr^{2(k-1)}=\frac{1}{(1-r^{2})^{2}}$, we can conclude in \eqref{eq2:lemSFQE} that
\begin{align*}
		\left\|(rT)^{m}x\right\|_{rT}^{2}\leq{}&r^{2m}\left[2c_{1,T}^{2}\left(\frac{1}{m+1}+\log\left(1-\frac{1}{2(m+1)\log r}\right)\right)+\frac{2Pb(T)^{2}}{(1+r)^{2}}\right]\|x\|^{2}\\
			\leq{}&2c_{1,T}^{2}r^{2m}\left(b+\log\left(1-\frac{1}{2(m+1)\log r}\right)\right)\|x\|^{2},
\end{align*}
for $b=1+\frac{Pb(T)^{2}}{c_{1,T}^{2}}$. 
\end{proof}
Another lemma, we will need, is the following result relating square function estimates for $T$ and $rT$ as $r\nearrow1$. This can be seen as a discrete analog of \cite[Proposition 3.4]{LeMerdy2003}.
\begin{lemma}\label{lem:expbdd}
Let $T$ be a Tadmor--Ritt operator on a Hilbert space. Then, the following are equivalent
\begin{enumerate}[label={{(\roman*)}}]
\item\label{lem:expbddit1} $T$ satisfies square function estimates.
\item\label{lem:expbddit2} $rT$ satisfies square function estimates uniform in $r\in(0,1)$, i.e., 
\begin{equation*}
     \exists K>0\ \forall r\in(0,1)\forall x\in X:\quad		\|x\|_{rT} \leq K\ \|x\|.
\end{equation*}
\end{enumerate}
\end{lemma}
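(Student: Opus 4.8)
The plan is to prove the two implications separately, with the direction \ref{lem:expbddit2} $\Rightarrow$ \ref{lem:expbddit1} being essentially a limiting argument and \ref{lem:expbddit1} $\Rightarrow$ \ref{lem:expbddit2} requiring an honest estimate. For \ref{lem:expbddit2} $\Rightarrow$ \ref{lem:expbddit1}, I would fix $x\in X$ and observe that for each fixed $r\in(0,1)$ the truncated sum $\sum_{k=1}^{N}k\|(rT)^{k}x-(rT)^{k-1}x\|^{2}$ is bounded by $K^{2}\|x\|^{2}$, and then let $r\nearrow1$ so that $(rT)^{k}x\to T^{k}x$ for every $k$ (polynomials in $T$ are continuous in the operator, and each $(rT)^k = r^k T^k$). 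This gives $\sum_{k=1}^{N}k\|T^{k}x-T^{k-1}x\|^{2}\le K^{2}\|x\|^{2}$ for every $N$, and letting $N\to\infty$ yields \eqref{eq:SQFE} with the same constant $K$.

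For the converse \ref{lem:expbddit1} $\Rightarrow$ \ref{lem:expbddit2}, I would use the resolvent/functional-calculus description of the square function. The key identity is that $\|x\|_{rT}^{2}$ can be written (up to constants) via an integral of the form $\int_{\partial\mathcal{B}_{\eta}}|\lambda|\,|\mathrm{something}|\,\|\lambda R(\lambda,rT)x\|^{2}\,\dots$, or more elementarily one relates $\|x\|_{rT}$ to $\|x\|_{T}$ directly. A cleaner route: expand as in the proof of Lemma \ref{lem:SFQE}, writing $(rT)^{k}x-(rT)^{k-1}x = r^{k}(T^{k}x-T^{k-1}x) - r^{k-1}(1-r)T^{k-1}x$ (or the symmetric splitting), so that
\begin{align*}
\|x\|_{rT}^{2} &\le 2\sum_{k=1}^{\infty}k\,r^{2k}\|T^{k}x-T^{k-1}x\|^{2} + 2(1-r)^{2}\sum_{k=1}^{\infty}k\,r^{2(k-1)}\|T^{k-1}x\|^{2}\\
&\le 2\sum_{k=1}^{\infty}k\,\|T^{k}x-T^{k-1}x\|^{2} + 2(1-r)^{2}Pb(T)^{2}\sum_{k=1}^{\infty}k\,r^{2(k-1)}\|x\|^{2}.
\end{align*}
The first sum is bounded by $2\|x\|_{T}^{2}\le 2K_{T}^{2}\|x\|^{2}$ by hypothesis \ref{lem:expbddit1}, and the second equals $2(1-r)^{2}Pb(T)^{2}(1-r^{2})^{-2}\|x\|^{2}\le 2Pb(T)^{2}\|x\|^{2}$ since $(1-r)^{2}(1-r^{2})^{-2}=(1+r)^{-2}\le1$. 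Hence $\|x\|_{rT}\le K\|x\|$ with $K=\sqrt{2K_{T}^{2}+2Pb(T)^{2}}$, uniform in $r$.

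The main obstacle is really just bookkeeping in the second implication: one must be careful that the splitting of $(rT)^{k}x-(rT)^{k-1}x$ produces only terms that can be controlled either by the square function of $T$ itself or by power-boundedness together with the geometric factor $r^{2(k-1)}$, and that the resulting geometric series conspires to cancel the $(1-r)^{2}$ exactly (which it does, via $(1+r)^{-2}$). Note that power-boundedness of $T$ is available for free here since $T$ is Tadmor--Ritt (Lemma \ref{lem:charRitt}). The first implication is a routine interchange of limits. I expect no genuine difficulty, only the need to choose the splitting so that no term is left over that would require square function estimates for $rT$ — which is exactly what we are trying to prove.
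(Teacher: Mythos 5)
Your proof is correct. The paper handles Lemma \ref{lem:expbdd} by appealing to the Banach-space generalization, Lemma \ref{lem:expbdd2}, whose proof is built entirely on the identity \eqref{eq:idexpbdd} (used for both implications) together with Fatou's lemma for the limit $r\nearrow1$. Your argument for \ref{lem:expbddit1}\,$\Rightarrow$\,\ref{lem:expbddit2}, based on the splitting
\begin{equation*}
(rT)^{k}x-(rT)^{k-1}x=r^{k}\bigl(T^{k}x-T^{k-1}x\bigr)-r^{k-1}(1-r)T^{k-1}x,
\end{equation*}
is the same idea as \eqref{eq:idexpbdd} after multiplying by $r^{k-1}$, and you exploit exactly the same identities $\sum_{k\ge1}kr^{2(k-1)}=(1-r^{2})^{-2}$ and $(1-r)^{2}(1-r^{2})^{-2}=(1+r)^{-2}\le1$ that the paper does. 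Where you diverge is in \ref{lem:expbddit2}\,$\Rightarrow$\,\ref{lem:expbddit1}: you bypass \eqref{eq:idexpbdd} altogether, truncate the square function to $N$ terms (which is dominated by $\|x\|_{rT}^{2}\le K^{2}\|x\|^{2}$ since all terms are nonnegative), pass $r\nearrow1$ term by term, and then let $N\to\infty$. In the Hilbert-space setting this is simpler and more transparent than the paper's Fatou argument; the paper instead keeps the full Rademacher sum so that the same proof carries over verbatim to the general Banach-space statement of Lemma \ref{lem:expbdd2}. Both arguments are valid, and yours is complete for the Hilbert-space lemma as stated.
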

\begin{proof}
This follows from the more general Lemma \ref{lem:expbdd2} in Section \ref{subsec:GeneralSQFE}.
\end{proof}
The following theorem is essentially Le Merdy's key argument to prove that \ref{thm:bddcalc1} implies \ref{thm:bddcalc2} in Theorem \ref{thm:bddcalc}. 
As we need its precise form, we state it explicitly. For a proof we refer to \cite[Proof of Theorem 7.3]{LeMerdy2014a}. For a definition of \textit{$R$-Ritt operator of $R$-type $\theta$} we refer to Section \ref{subsec:GeneralSQFE}. For the moment it suffices to remark that on Hilbert spaces this notion is equivalent to the of one a Tadmor--Ritt operator of type $\theta$, see Section \ref{subsec:GeneralSQFE}.
\begin{theorem}[Le Merdy 2014]\label{thm:LeMerdy}
Let $T$ be a $R$-Ritt operator of $R$-type $\theta$ on a Banach space $X$. Let $0<\theta<\eta<\frac{\pi}{2}$. Then, there exists $c=c(\eta,C(T))>0$ such that
\begin{equation*}
|\langle y, p(T)x\rangle| \leq c \cdot \|p\|_{\infty,\mathcal{B}_{\eta}}\cdot \|x\|_{T}\cdot \|y\|_{T^{*}},
\end{equation*}
for any polynomial $p$, $x\in X$ and $y\in X^{*}$.\newline
(Note that the right-hand-side is allowed to be $\infty$).
\end{theorem}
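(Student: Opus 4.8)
The plan is to run the discrete analogue of McIntosh's reproducing-formula argument, i.e.\ the scheme by which square function estimates for a sectorial operator and its adjoint produce a bounded $H^\infty$-calculus. First I would dispose of the trivial case: if $\|x\|_T=\infty$ or $\|y\|_{T^*}=\infty$ there is nothing to prove, so assume both are finite. Next I would pass from $T$ to $rT$, $r\in(0,1)$, so that $\sigma(T)$ becomes a compact subset of the open unit disc; one checks that $rT$ is again $R$-Ritt of type $\le\theta$ with $C_\eta(rT)$ bounded as $r\nearrow1$, and since $p(rT)\to p(T)$ the estimate follows in the limit (cf.\ \cite{LeMerdy2003,LeMerdy2014a}). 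This is also the point where the implicit normalisation enters that $\|\cdot\|_T$ and $\|\cdot\|_{T^*}$ separate points (equivalently, $I-T$ is injective with dense range); under it all the series below converge.

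The backbone is a discrete Calderón formula. I would set $\psi_n(z):=\sqrt n\,(1-z^2)z^{n-1}$; evaluating the geometric identity $\sum_{n\ge1}nw^{n-1}=(1-w)^{-2}$ at $w=z^2$ gives $\sum_{n\ge1}\psi_n(z)^2=1$ for $|z|<1$, uniformly on (a neighbourhood of) $\sigma(T)$, whence $\sum_{n\ge1}\psi_n(T)^2=I$ in operator norm via the Riesz--Dunford integral over a contour around $\sigma(T)$ inside $\D$. Since $\psi_n$ has real coefficients, $\psi_n(T)^*=\psi_n(T^*)$, so $\langle y,p(T)x\rangle=\sum_{n\ge1}\langle\psi_n(T^*)y,\psi_n(T)p(T)x\rangle$. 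Writing $\psi_n(z)=\sqrt n(1-z)z^{n-1}+\sqrt n(1-z)z^{n}$ one reads off $\sum_n\|\psi_n(T^*)y\|^2\le4\|y\|_{T^*}^2$ and, with $p(T)x$ in place of $y$, $\sum_n\|\psi_n(T)p(T)x\|^2\le4\|p(T)x\|_T^2$. By Cauchy--Schwarz the whole statement thus reduces to the multiplier estimate $\|p(T)x\|_T\le c'\,\|p\|_{\infty,\mathcal{B}_\eta}\|x\|_T$ with $c'=c'(\eta,C(T))$, after which $c=4c'$ works.

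For the multiplier estimate I would use a second reproducing formula together with a Cauchy-integral bound and a Schur test. Fix $\delta\in(\theta,\eta)$ and put $\varphi_k(z):=\sqrt k(1-z)z^{k-1}$ and $\tilde\varphi_k(z):=\sqrt k(1-z)(1+z)^2z^{k-1}$, so that $\sum_k\varphi_k\tilde\varphi_k\equiv1$ on $\D$ and $\sum_k\|\varphi_k(T)x\|^2=\|x\|_T^2$ exactly. Since $\|p(T)x\|_T^2=\sum_j\|(p\varphi_j)(T)x\|^2$ and $\sum_k\varphi_k(T)\tilde\varphi_k(T)=I$, I would expand $(p\varphi_j)(T)x=\sum_k(p\varphi_j\tilde\varphi_k)(T)\,\varphi_k(T)x$. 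Each $p\varphi_j\tilde\varphi_k$ lies in $H_0^\infty(\mathcal{B}_\eta)$ (the factor $(1-z)^2$ supplies the vanishing at $1$), so by Remark \ref{rem21} it is represented by a Riesz--Dunford integral over $\partial\mathcal{B}_\delta$, and a computation of the type in Lemma \ref{lem2} — the weight $|z|^{j+k-2}$ concentrates near $z=1$, where $|1-z|$ is small — yields $\|(p\varphi_j\tilde\varphi_k)(T)\|\le C(\delta,C(T))\,\|p\|_{\infty,\mathcal{B}_\eta}\,\sqrt{jk}\,(j+k)^{-2}$. The kernel $\omega(j,k)=\sqrt{jk}\,(j+k)^{-2}$ is Schur bounded (its row, and by symmetry column, sums are uniformly bounded, compared with $\int_0^\infty\sqrt u\,(1+u)^{-2}\,du<\infty$). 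Pairing the expansion with an arbitrary norm-one sequence in $\ell^2(X^*)$ and applying the Schur test to $\sum_{j,k}\langle(p\varphi_j\tilde\varphi_k)(T)^*w_j,\varphi_k(T)x\rangle$ then gives $\|p(T)x\|_T\le C'(\delta,C(T))\,\|p\|_{\infty,\mathcal{B}_\eta}\big(\sum_k\|\varphi_k(T)x\|^2\big)^{1/2}=C'\|p\|_{\infty,\mathcal{B}_\eta}\|x\|_T$, which is the claimed multiplier estimate.

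The main obstacle — and the reason the hypothesis is $R$-Ritt rather than merely Tadmor--Ritt — is that on a general Banach space the relevant square functions are the Rademacher-averaged ones of Section \ref{subsec:GeneralSQFE}, so the three places where the argument above uses a scalar bound on $(z-1)R(z,T)$ (the identity $\sum_n\psi_n(T)^2=I$, the estimate for $\|(p\varphi_j\tilde\varphi_k)(T)\|$, and the interchange of sum and integral) must be upgraded to statements about Rademacher sums. I expect this to be the genuinely technical part: one replaces $\|(z-1)R(z,T)\|\le C_\delta(T)$ by $R$-boundedness of $\{(z-1)R(z,T):z\notin\overline{\mathcal{B}_\delta}\}$ — exactly the $R$-Ritt condition — and replaces the scalar Schur test by its Rademacher analogue, using Kahane's contraction principle together with $R$-boundedness of the families $\{\varphi_k(T)\}$ and $\{(p\varphi_j\tilde\varphi_k)(T)\}$ inherited from the resolvent bound on $\partial\mathcal{B}_\delta$. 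On a Hilbert space none of this is needed, and the scalar argument sketched above is already complete.
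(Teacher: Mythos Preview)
The paper does not prove this theorem: it is quoted from Le Merdy and the paper simply states ``For a proof we refer to \cite[Proof of Theorem 7.3]{LeMerdy2014a}.'' Your proposal is therefore not being compared against a proof in the present paper but against Le Merdy's original argument, and in the Hilbert space case your sketch is indeed that argument: a discrete Calder\'on reproducing identity $\sum_n\psi_n^2=1$, the duality/Cauchy--Schwarz step, and a Schur-bounded kernel $\sqrt{jk}\,(j+k)^{-2}$ obtained from a contour estimate of the type in Lemma~\ref{lem2}. That part is correct.

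The genuine gap is in your Banach space paragraph. On $\mathrm{Rad}(X)$ the scalar Schur test has no direct analogue: a norm bound $\|(p\varphi_j\tilde\varphi_k)(T)\|\le C\,\omega(j,k)$ with Schur-bounded $\omega$ does \emph{not} by itself make the matrix $(T_{jk})$ bounded on $\mathrm{Rad}(X)$. What is actually needed is $R$-boundedness of the normalised family $\{(j+k)^{2}(jk)^{-1/2}(p\varphi_j\tilde\varphi_k)(T)\}_{j,k}$, and this has to be extracted from the $R$-boundedness of $\{(z-1)R(z,T):z\in\partial\mathcal{B}_\delta\}$ via the stability of $R$-bounds under $L^{1}$-averaging (absolutely convex hulls). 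You gesture at this (``Kahane's contraction principle together with $R$-boundedness \dots inherited from the resolvent bound''), but the passage from resolvent $R$-boundedness to the matrix estimate on $\mathrm{Rad}(X)$ is precisely the technical core of Le Merdy's proof and cannot be dismissed as routine. Your reduction to a multiplier estimate $\|p(T)x\|_T\lesssim\|p\|_{\infty,\mathcal{B}_\eta}\|x\|_T$ is also slightly indirect; Le Merdy works directly with the double sum $\sum_{m,n}\langle\varphi_m(T^{*})y,(\tilde\varphi_m p\tilde\varphi_n)(T)\varphi_n(T)x\rangle$, which avoids having to control the square function of $p(T)x$.
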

Combining Theorem \ref{thm:LeMerdy} and Lemma \ref{lem:SFQE} yields the following refinement of Theorem \ref{thm2}.
\begin{theorem}\label{thm3}
Let $T$ be a Tadmor--Ritt operator on a Hilbert space $X$. Assume that either $T$ or $T^{*}$ satisfies square function estimates. Then, for integers $0\leq m\leq n$ and $p(z)=\sum_{j=m}^{n}a_{j}z^{j}$,
\begin{equation*}
\|p(T)\| \leq acKe^{\frac{1}{2}}\cdot  \sqrt{b+\log\frac{n+2}{m+1}}\cdot \|p\|_{\infty,\D},
\end{equation*}
with $K,a,b,c$ defined in \eqref{eq:SQFE}, Lemma \ref{lem:SFQE} and Theorem \ref{thm:LeMerdy}, respectively.
\end{theorem}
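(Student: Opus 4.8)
The plan is to combine Le~Merdy's pairing estimate (Theorem~\ref{thm:LeMerdy}) with the square function bound of Lemma~\ref{lem:SFQE}. The key difficulty is that Theorem~\ref{thm:LeMerdy} requires control of \emph{both} $\|\cdot\|_{T}$ and $\|\cdot\|_{T^{*}}$, whereas we only assume a square function estimate for one of the two operators; the way around this will be to shift all the $z^{m}$-mass of $p$ onto the factor whose square-function norm is supplied by Lemma~\ref{lem:SFQE} (which needs no such hypothesis), using the hypothesis only on the remaining ``bare'' factor. First I would reduce to the case that $T$ (not $T^{*}$) satisfies \eqref{eq:SQFE}: if instead $T^{*}$ does, apply the conclusion to the Tadmor--Ritt operator $T^{*}$ and the polynomial $\bar p(z)=\sum_{j=m}^{n}\overline{a_{j}}z^{j}$, using $p(T)^{*}=\bar p(T^{*})$, $\|\bar p\|_{\infty,\D}=\|p\|_{\infty,\D}$, $C(T^{*})=C(T)$, $Pb(T^{*})=Pb(T)$, $c_{1,T^{*}}=c_{1,T}$. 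So assume $T$ satisfies \eqref{eq:SQFE} with constant $K$; by Lemma~\ref{lem:expbdd}, $\|x\|_{rT}\le K\|x\|$ for all $r\in(0,1)$.

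Next I would fix $r\in(0,1)$ and work with the contraction $rT$, which is again Tadmor--Ritt with $C(rT)\le 2C(T)$, hence (on a Hilbert space) $R$-Ritt of $R$-type $<\eta:=\arccos\frac{1}{2C(T)}<\frac{\pi}{2}$. Write $p(z)=z^{m}p_{0}(z)$ with $\deg p_{0}=n-m$. Since $(rT)^{m}p_{0}(rT)=p_{0}(rT)(rT)^{m}$, for $x\in X$ and $y\in X^{*}$,
\begin{equation*}
	\langle y,p(rT)x\rangle=\langle((rT)^{*})^{m}y,\,p_{0}(rT)x\rangle.
\end{equation*}
Applying Theorem~\ref{thm:LeMerdy} to $rT$ and $p_{0}$, then $\|x\|_{rT}\le K\|x\|$, and finally Lemma~\ref{lem:SFQE} applied to the Tadmor--Ritt operator $T^{*}$ (for which the constants $a,b$ are unchanged, since $c_{1,T^{*}}=c_{1,T}$ and $Pb(T^{*})=Pb(T)$) to bound $\|((rT)^{*})^{m}y\|_{(rT)^{*}}$, I obtain
\begin{equation*}
	\|p(rT)\|\le acK\,r^{m}\sqrt{\,b+\log\!\Big(1-\tfrac{1}{2(m+1)\log r}\Big)}\;\|p_{0}\|_{\infty,\mathcal{B}_{\eta}},
\end{equation*}
where $c$ is a single constant bounding the one of Theorem~\ref{thm:LeMerdy} for all the operators $rT$ simultaneously; this is admissible because $C(rT)$ stays in a fixed bounded interval.

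To return to $p(T)$ and choose $r$, I would use the exact identity $p(T)=q_{r}(rT)$ with $q_{r}(z):=p(z/r)=z^{m}q_{r,0}(z)$, $q_{r,0}(z)=r^{-m}p_{0}(z/r)$; since $q_{r}$ is again supported on degrees $m,\dots,n$, the previous display applies with $q_{r}$ in place of $p$, and Bernstein's inequality Lemma~\ref{it1:Bernstein} (with $\alpha=\frac{\pi}{2}$, applied to the degree-$(n-m)$ polynomial $p_{0}$ and the dilation factor $1/r\ge1$) together with the maximum principle gives
\begin{equation*}
	\|q_{r,0}\|_{\infty,\mathcal{B}_{\eta}}\le\|q_{r,0}\|_{\infty,\D}=r^{-m}\|p_{0}\|_{\infty,\tfrac1r\D}\le r^{-n}\|p_{0}\|_{\infty,\D}=r^{-n}\|p\|_{\infty,\D}.
\end{equation*}
Hence $\|p(T)\|\le acK\,r^{-(n-m)}\sqrt{\,b+\log(1-\tfrac{1}{2(m+1)\log r})}\,\|p\|_{\infty,\D}$. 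Finally I would take $r=\exp\!\big(-\tfrac{1}{2(n-m+1)}\big)\in[e^{-1/2},1)$, for which $r^{-(n-m)}\le e^{1/2}$ and $-\tfrac{1}{2(m+1)\log r}=\tfrac{n-m+1}{m+1}$, so that $1-\tfrac{1}{2(m+1)\log r}=\tfrac{n+2}{m+1}$, which gives the asserted bound. I expect the only genuine obstacle to be the one addressed in the first paragraph — reconciling the two-sided square-function input of Theorem~\ref{thm:LeMerdy} with a one-sided hypothesis — which the $z^{m}$-shift and the $rT$-regularization are designed to handle; the remaining steps are routine bookkeeping, including the verification that the constant of Theorem~\ref{thm:LeMerdy} may be taken uniform in $r$.
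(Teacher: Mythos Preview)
Your proposal is correct and follows essentially the same approach as the paper. The only cosmetic differences are that the paper assumes w.l.o.g.\ that $T^{*}$ (rather than $T$) satisfies \eqref{eq:SQFE} and accordingly pushes the factor $z^{m}$ onto the primal side, writing $p(T)x=q_{1/r}(rT)(rT)^{m}x$ and applying Theorem~\ref{thm:LeMerdy} directly to $q_{1/r}$, whereas you push $z^{m}$ to the dual side and go via an intermediate bound for $\|p(rT)\|$ before substituting $q_{r}$; by the $T\leftrightarrow T^{*}$ symmetry these are the same argument, and your explicit remark on the uniformity of $c$ in $r$ is in fact a point the paper leaves implicit.
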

\begin{proof}
Since $X$ is a Hilbert space, $T$ is $R$-Ritt of type $\theta=\arccos\frac{1}{C(T)}$. Let $r\in(0,1)$ and choose $\eta\in(\theta,\frac{\pi}{2})$. Define $p_{\frac{1}{r}}(z)=p(\frac{z}{r})$. It is easy to see that $p_{\frac{1}{r}}(rT)=p(T)$ since $p$ is a polynomial.
Furthermore, we write $p(z)=z^{m}q(z)$ for $q$ having degree $n-m$. Therefore, for all $x\in X$,
\begin{equation}\label{eq:thm3eq2}
	p(T)x=q_{\frac{1}{r}}(rT)(rT)^{m}x.
\end{equation}
W.l.o.g.\ let $T^{*}$ satisfy square function estimates. Hence, by Lemma \ref{lem:expbdd}, $\|y\|_{rT^{*}}\leq K \|y\|$ for all $y\in X^{*}$ and all $r\in(0,1)$.
Applying Theorem \ref{thm:LeMerdy} for $rT$ and $p=q_{\frac{1}{r}}$ yields
\begin{equation}\label{eq:thm3eq1}
	|\langle y, q_{\frac{1}{r}}(rT)(rT)^{m}x\rangle | \leq cK\cdot \|q_{\frac{1}{r}}\|_{\infty,\D}\cdot \|(rT)^{m}x\|_{rT}\cdot \|y\|,
\end{equation}
for $x\in X, y\in X^{*}$ where we used that $\mathcal{B}_{\eta}\subset \D$. By Lemma \ref{it1:Bernstein} and the maximum principle,  $\|q_{\frac{1}{r}}\|_{\infty,\D}\leq r^{m-n}\|q\|_{\infty,\D}=r^{m-n}\|p\|_{\infty,\D}$. Therefore, and by Lemma \ref{lem:SFQE}, Eq.\ \eqref{eq:thm3eq1} yields
\begin{equation*}
\|q_{\frac{1}{r}}(rT)(rT)^{m}\| \leq acK \sqrt{b+\log\left(1-\tfrac{1}{2(m+1)\log r}\right)}\cdot r^{2m-n}\cdot \|p\|_{\infty,\D}.
\end{equation*}
Choose $r=e^{-\frac{1}{2(n-m+1)}}$. Then $1-\frac{1}{2(m+1)\log r}=\frac{n+2}{m+1}$ and $r^{2m-n}=e^{\frac{n-2m}{2(n-m+1)}}<e^{\frac{1}{2}}$. Thus, by (\ref{eq:thm3eq2}),
\begin{equation*}
 \|p(T)\| \leq acKe^{\frac{1}{2}}\sqrt{b+\log\frac{n+2}{m+1}}\cdot \|p\|_{\infty,\D}.
\end{equation*}
\end{proof}
\begin{remark}\hfill
\begin{enumerate}
\item The proof idea of Theorem \ref{thm3} can also be used for an alternative proof of the logarithmic behavior in Theorem \ref{thm2}, if we do a similar computation for $\|T^{m}y\|_{rT^{*}}$  (instead of assuming square function estimates $\|y\|_{T}\lesssim \|y\|$). This finally yields another factor of the form $\sqrt{\tilde{b}+\log\frac{n+2}{m+1}}$.
\item As explained in Remark \ref{rem1}, in Theorem \ref{thm3} we can also derive `sharper' estimates in the $\|\cdot\|_{\infty,\mathcal{B}_{\eta}}$-norm at the price that additional factors of the form $(\sin\eta)^{-n}$ enter the estimate.
\end{enumerate}
\end{remark}
\section{Discrete square function estimates on general Banach spaces}\label{subsec:GeneralSQFE}
As indicated in Section \ref{subsec:HSSQFE}, for non-Hilbert spaces, Definition \ref{Def:SQFE} is not suitable for characterizing boundedness of the $H^{\infty}$-calculus. For $L^{p}$-spaces the proper replacement is given by 
\begin{equation}\label{eq:LpSQFE}
	\|x\|_{T}:= \left\|\left(\sum\nolimits_{k=1}^{\infty}k|T^{k}x-T^{k-1}x|^{2}\right)^{\frac{1}{2}}\right\|_{L^{p}} \lesssim \|x\|
\end{equation}
where $T$ is a Tadmor--Ritt operator on $L^{p}(\Omega)$, $p\in[1,\infty)$, for some measure space $(\Omega,\mu)$, see \cite{KaltonPortal}, \cite{LeMerdy2014a} and the references therein. By Fubini's theorem, this definition coincides with Definition \ref{Def:SQFE} if $p=2$.\newline
However, to cover general Banach spaces, we need the following generalization using Rademacher averages. This approach (for sectorial operators), paving the way for a lot of research in this field, was introduced by Kalton and Weis in their `famous' unpublished note, see the new preprint \cite{KaltonWeisUnpublished}. For an excellent overview on the topic we refer to \cite{KunstmannWeis04}.
The discrete version of these general square function estimates for Tadmor--Ritt operators recently appeared in \cite{LeMerdy2014a}. 

We briefly recap the definition of the needed Rademacher norms. For more details, we refer to \cite{LeMerdy2014a,KunstmannWeis04}.
For $k\geq1$, we define the Rademacher function $\veps_{k}(t)={\rm sgn}(\sin(2^{k}\pi t))$. It is easy to see that $(\veps_{k})_{k\geq1}$ forms an orthonormal basis in $L^{2}(I)$ with $I=[0,1]$. For a Banach space $X$ let us consider the linear span of  elements $\veps_{k}\otimes x =(t\mapsto\veps_{k}(t) x)$, $k\geq0$, $x\in X$, in the Bochner space $L^{2}(I,X)$. Denote the closure of this set, w.r.t.\ the norm in $L^{2}(I,X)$, by ${\rm Rad}(X)$. Hence, ${\rm Rad}(X)$ becomes a Banach space with the norm
\begin{equation*}
	\|\tilde{x}\|_{{\rm Rad(X)}}= \left(\int_{I}^{} \left\|\sum\nolimits_{k} \veps_{k}(t)x_{k} \right\|^{2}  dt\right)^{\frac{1}{2}},
\end{equation*} 
for elements $\tilde{x}=\sum_{k}\veps_{k}\otimes x_{k}$ with $(x_{k})_{k}$ being a finite family in $X$. By orthonormality of the Rademacher functions it follows that 
\begin{equation}\label{eq:Rad}
	{\rm Rad}(X) = \left\{\sum\nolimits_{k=1}^{\infty} \veps_{k} \otimes x_{k}:x_{k}\in X, \text{the sum converges in }L^{2}(I,X)\right\}.
\end{equation}
Now we can define a general square function by
\begin{equation*}
\|x\|_{T} =  \left\|\sum\nolimits_{k=1}^{\infty} \veps_{k} \otimes k(T^{k}x-T^{k-1}x)\right\|_{{\rm Rad}(X)},
\end{equation*}
where we set $\|x\|_{T}=\infty$ if $\sum_{k}\veps_{k}\otimes k(T^{k}x-T^{k-1}x)\notin{\rm Rad}(X)$.
\begin{definition}[Square function estimates for Tadmor--Ritt operators]\label{def:abstractSQFE}
Let $T$ be a Tadmor--Ritt operator on a Banach space $X$. We say that $T$ satisfies \textit{(abstract) square function estimates}, if
there exists $K_{T}>0$ such that for all $x\in X$,
\begin{equation}\label{eq:SQFE2}
\|x\|_{T} =  \left\|\sum\nolimits_{k=1}^{\infty} \veps_{k} \otimes k^{\frac{1}{2}}(T^{k}x-T^{k-1}x)\right\|_{{\rm Rad}(X)}\leq K_{T} \|x\|.
\end{equation}
\end{definition}
Note that if $X$ is a Hilbert space, as a consequence of Parseval's identity, this definition of square function estimates coincides with the one given in Definition \ref{Def:SQFE}. Precisely, for any finite sequence $(x_{k})_{k}\in X$,
\begin{equation}\label{eq:RademacherParseval}
	\left\|\sum\nolimits_{k}\veps_{k} \otimes x_{k}\right\|_{{\rm Rad}(X)}=(\sum\nolimits_{k}\|x_{k}\|^{2})^{\frac{1}{2}},
\end{equation}
which shows that both definitions of square functions estimates coincide. Further, it can be shown that for $X=L^{p}=L^{p}(\Omega,\mu)$ ($p\in[1,\infty)$ and $(\Omega,\mu)$ being $\sigma$-additive),  
\begin{equation*}
	\left\|\sum\nolimits_{k}\veps_{k} \otimes x_{k}\right\|_{{\rm Rad}(L^{p})}\sim \left\|\left(\sum\nolimits_{k}|x|^{2}\right)^{\frac{1}{2}}\right\|_{L^{p}},
\end{equation*}
see \cite[Remark 2.9]{KunstmannWeis04}.
 Hence, \eqref{eq:LpSQFE} is equivalent to having square function estimates using Rademacher averages.
 
The notion of $R$-boundedness emerges naturally in the framework of the space ${\rm Rad}(X)$.
After being introduced in \cite{BerksonGillespie1994}, it has been proved very useful in the study of maximal regularity, see \cite{KunstmannWeis04} for a detailed introduction.

\begin{definition}\label{def:Rboundedness}
Let $X$ be a Banach space and $\mathcal{T}\subset\Bo(X)$ a set of bounded operators. Then, $\mathcal{T}$ is called \textit{$R$-bounded} if there exists a constant $M$  such that for any finite family $(T_{k})){k}\in\mathcal{T}$, and finite sequence $(x_{k})_{k}\subset X$,
\begin{equation}\label{eq:Rbounded}
	\left\|\sum\nolimits_{k} \veps_{k}\otimes T_{k}x_{k}\right\|_{{\rm Rad}(X)} \leq M \left\| \sum\nolimits_{k}\veps_{k}\otimes x_{k} \right\|_{{\rm Rad}(X)}.
\end{equation}
The smallest possible constant $C$ is called the $R$-bound.
\end{definition}
By \eqref{eq:RademacherParseval}, it follows that for Hilbert spaces the notion of $R$-boundedness of $\mathcal{T}$ coincides with (uniform) boundedness of $\mathcal{T}$ in the operator norm. However, in general, $R$-boundedness only implies boundedness, see \cite{ArendtBu2002}.\newline
Now we are able to introduce $R$-Ritt operators, which first appeared in \cite{Blunck1,Blunck2}. Nonetheless the notion \textit{$R$-Tadmor--Ritt} would be more consistent in this Chapter, we use the name \textit{$R$-Ritt} following Le Merdy \cite{LeMerdy2014a}.  For Hilbert spaces, the following notion is equivalent to the one of a Tadmor--Ritt operator, see Lemma \ref{lem:charRitt}.
\begin{definition}\label{def:RRitt}
An operator $T$ on a Banach space $X$ is called \textit{$R$-Ritt} if the sets
\begin{equation*}
\left\{T^{n}:n\in\N\right\} \ \text{ and }\ \left\{n(T^{n}-T^{n-1}):n\in\N\right\}
\end{equation*}
are $R$-bounded. We denote the bounds by $Pb^{R}(T)$ and $c_{1,T}^{R}$, respectively.
\end{definition}
By Lemma \ref{lem:charRitt}, an $R$-Ritt operator is always a Tadmor--Ritt operator and the notions coincide  on Hilbert spaces. Moreover, the following $R$-Ritt version of Lemmata \ref{Lemma1} and \ref{lem:charRitt} holds. For a proof, see \cite[Lemma 5.2]{LeMerdy2014a} and \cite{Blunck1}.
\begin{lemma}\label{lem1:RRitt}
Let $T$ be a bounded operator on a Banach space $X$. The following assertions are equivalent.
		\begin{enumerate}[label={(\roman*)}]
		\item\label{Lemma1RRittit1} $T$ is $R$-Ritt.
		\item\label{Lemma1RRittit2} $\sigma(T)\subset \overline{\mathcal{B}_{\theta}}$ for some $\theta\in[0,\frac{\pi}{2})$ and  for all $\eta\in(\theta,\tfrac{\pi}{2}]$
		 \begin{equation} \label{eq:RRitttheta}
				 \left\{(z-1)R(z,T): z\in\C\setminus\overline{\mathcal{B}}_{\eta}\right\} \text{ is $R$-bounded}.
		\end{equation}
	\end{enumerate}
	In this case, we say that $T$ is of \textit{$R$-Ritt type $\theta$}.
\end{lemma}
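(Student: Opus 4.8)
The plan is to establish the equivalence of the two characterizations of $R$-Ritt operators by mimicking the classical (non-$R$) proof of Lemma \ref{Lemma1}, replacing every uniform norm bound with the corresponding $R$-bound and exploiting the standard stability properties of $R$-boundedness. First I would prove the direction \ref{Lemma1RRittit1} $\Rightarrow$ \ref{Lemma1RRittit2}. Since $T$ is $R$-Ritt, in particular $T$ is Tadmor--Ritt by Lemma \ref{lem:charRitt}, so $\sigma(T)\subset\overline{\mathcal{B}_\theta}$ for some $\theta\in[0,\tfrac{\pi}{2})$ follows from Lemma \ref{Lemma1}\ref{Lemma1it1}. For the $R$-boundedness of the resolvent family on $\C\setminus\overline{\mathcal{B}}_\eta$, I would start from the Taylor/Neumann-type series representation of $R(z,T)$ valid for $|z|>\|T\|$, namely $R(z,T)=\sum_{n\ge0}z^{-n-1}T^{n}$, and more importantly the representation of $(z-1)R(z,T)$ as a series whose coefficients involve the $R$-bounded family $\{n(T^{n}-T^{n-1})\}$ together with $\{T^n\}$. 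The key mechanism is that a series $\sum_k a_k(z) S_k$ with scalar coefficients summable uniformly in $z$ over the relevant region and operators $S_k$ drawn from an $R$-bounded set is itself $R$-bounded, with bound controlled by $\sup_z\sum_k|a_k(z)|$ times the $R$-bound; this is the discrete analogue of the fact that absolutely convergent "integral means" of $R$-bounded families are $R$-bounded (see \cite{KunstmannWeis04}). One then checks, exactly as in the scalar estimate underlying Lemma \ref{Lemma1}, that the geometric decay coming from $z\notin\overline{\mathcal{B}}_\eta$ makes the scalar sums bounded uniformly in $z$; this gives \eqref{eq:RRitttheta}.

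For the converse \ref{Lemma1RRittit2} $\Rightarrow$ \ref{Lemma1RRittit1}, I would recover the two $R$-bounded families by Cauchy-type integral formulas. Writing $T^n=\frac{1}{2\pi i}\int_\Gamma z^n R(z,T)\,dz$ and $n(T^n-T^{n-1})=\frac{1}{2\pi i}\int_\Gamma n\,z^{n-1}(z-1)R(z,T)\,dz$ over a contour $\Gamma=\partial\mathcal{B}_\eta$ surrounding $\overline{\mathcal{B}_\theta}$, I express each operator as an average of the $R$-bounded family $\{(z-1)R(z,T):z\in\Gamma\}$ against scalar kernels. The crucial point is that the $L^1$-norms of these kernels in $z$ along $\Gamma$ are bounded uniformly in $n$ — this is precisely the Stolz-domain estimate: on $\partial\mathcal{B}_\eta$ one has $|z|\le 1$ with exponential-type decay of $|z|^n$ away from the vertex $1$, balanced against the factor $|z-1|^{-1}$, so $\int_\Gamma |z|^{n}|dz|$ and $\int_\Gamma n|z|^{n-1}\,|dz|$ stay bounded in $n$ (this is exactly the content packaged in Lemma \ref{lem2} with $r$ sent appropriately, and in the classical proofs of Lemma \ref{lem:charRitt}). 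Since averaging an $R$-bounded family against a uniformly $L^1$-bounded scalar kernel yields an $R$-bounded family, both $\{T^n\}$ and $\{n(T^n-T^{n-1})\}$ are $R$-bounded, i.e. $T$ is $R$-Ritt.

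The main obstacle is the "averaging preserves $R$-boundedness" principle in its discrete/series form — making precise that $\{\sum_k a_k(z)S_k : z\}$ or $\{\frac{1}{2\pi i}\int_\Gamma \varphi_z(w)(w-1)R(w,T)\,dw : z\}$ is $R$-bounded with the expected bound. This is standard but must be invoked carefully: one uses that $\mathrm{Rad}(X)$-norms behave well under scalar convolution/integration (e.g. via Kahane's contraction principle for the scalar multipliers and a Fubini argument in $L^2(I,X)$), so that $R$-bounds pass through integrals against $L^1$ kernels with a constant-factor loss. Once this lemma is in hand, everything else reduces to the same scalar kernel estimates on the Stolz domain that already appear in the proofs of Lemmata \ref{Lemma1}, \ref{lem:charRitt} and \ref{lem2}, so I would simply cite \cite[Lemma 5.2]{LeMerdy2014a} and \cite{Blunck1} for the remaining routine verifications rather than reproduce them.
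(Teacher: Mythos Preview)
The paper does not actually prove this lemma; it simply cites \cite[Lemma 5.2]{LeMerdy2014a} and \cite{Blunck1}. Your plan is precisely the strategy underlying those references: both directions rest on the principle that $R$-boundedness is preserved under absolutely convergent scalar averages (series or contour integrals), combined with the same Stolz-domain kernel estimates that drive Lemmata \ref{Lemma1} and \ref{lem:charRitt}. In that sense your proposal and the paper's ``proof'' coincide.

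One point deserves care in your sketch of \ref{Lemma1RRittit1} $\Rightarrow$ \ref{Lemma1RRittit2}. The Neumann series $R(z,T)=\sum_{n\ge0}z^{-n-1}T^{n}$ (and the resulting representation $(z-1)R(z,T)=I+\sum_{n\ge1}z^{-n}(T^{n}-T^{n-1})$) converges only for $|z|>1$, whereas $\C\setminus\overline{\mathcal{B}}_{\eta}$ also contains points with $|z|\le1$, namely the two wedges between $\partial\mathcal{B}_{\eta}$ and $\partial\D$ near the vertex $1$. On that region the ``geometric decay'' you invoke is absent, so your scalar sums are not uniformly bounded there as stated. In \cite{Blunck1} this is handled by passing to the sectorial operator $A=I-T$ (for which the corresponding region is a genuine exterior sector and the Laplace-type representation $R(z,T)=-\sum_{n\ge0}(1-z)^{-n-1}(I-T)^{n}$, respectively the continuous analogue, does the job), while \cite{LeMerdy2014a} argues via this same transfer. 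Since you already intend to cite these references for the details, this is not a fatal gap, but you should not claim that the single power-series representation covers all of $\C\setminus\overline{\mathcal{B}}_{\eta}$.
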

Now we are ready to prove the corresponding $R$-Ritt version of the results in Section \ref{subsec:HSSQFE} for general Banach spaces.
\begin{lemma}\label{lem:SFQE2}
Let $T$ be a $R$-Ritt operator on a Banach space $X$. For $m\in\N\cup\left\{0\right\}$, $r\in(0,1)$,
\begin{equation*}
	\|(rT)^{m}x\|_{rT} \leq a r^{m} \sqrt{b_{R}+\log\left(1-\frac{1}{2(m+1)\log r}\right)} \ \|x\|\quad \forall x\in X,
\end{equation*}
with $a_{R}=\sqrt{2}c_{1,T}^{R}$ and $b_{R}=1+\frac{Pb^{R}(T)^{2}}{(c_{1,T}^{R})^{2}}$, where $c_{1,T}^{R}, Pb^{R}(T)$ are defined in Def.\ \ref{def:RRitt}.
\end{lemma}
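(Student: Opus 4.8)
The strategy is to mimic the Hilbert-space proof of Lemma \ref{lem:SFQE} line by line, replacing the scalar $\ell^2$-sums and the estimates $\|n(T^n-T^{n-1})\| \le c_{1,T}$, $\|T^n\| \le Pb(T)$ by their $R$-bounded analogues. First I would write out, by definition,
\begin{equation*}
\|(rT)^{m}x\|_{rT}=\left\|\sum\nolimits_{k=1}^{\infty}\veps_{k}\otimes k^{\frac12}\bigl(r^{k+m}T^{k+m}x-r^{k-1+m}T^{k-1+m}x\bigr)\right\|_{{\rm Rad}(X)},
\end{equation*}
and split the $k$-th term as $r^{k+m}(T^{k+m}x-T^{k-1+m}x)+(r^{k+m}-r^{k-1+m})T^{k-1+m}x$ so that the sum decomposes, by the triangle inequality in ${\rm Rad}(X)$, into two pieces $S_1$ and $S_2$ exactly as in \eqref{eq2:lemSFQE}. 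On the first piece I write $k^{\frac12}(T^{k+m}x-T^{k-1+m}x)= \frac{k^{\frac12}}{k+m}\cdot (k+m)(T^{k+m}-T^{k-1+m})x$ and pull the operators $(k+m)(T^{k+m}-T^{k-1+m})$ out using $R$-boundedness (Definition \ref{def:RRitt}), which costs a factor $c_{1,T}^{R}$; what remains is $\|\sum_k \veps_k\otimes \frac{k^{\frac12}r^{k+m}}{k+m}x\|_{{\rm Rad}(X)}$, a Rademacher sum of scalar multiples of the \emph{single} vector $x$, hence equal to $\bigl(\sum_k \frac{k r^{2(k+m)}}{(k+m)^2}\bigr)^{\frac12}\|x\|$ by \eqref{eq:RademacherParseval} applied in the one-dimensional span of $x$ — equivalently by the contraction principle for Rademacher sums with scalar coefficients, which holds in every Banach space. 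For the second piece I similarly pull out the $R$-bounded family $\{T^{k-1+m}\}$ at the cost $Pb^{R}(T)$, leaving $(1-r)\bigl(\sum_k k r^{2(k-1+m)}\bigr)^{\frac12}\|x\| = (1-r)\frac{r^{m}}{1-r^2}\|x\| = \frac{r^m}{1+r}\|x\|$.

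The scalar series estimates are then literally those already done in the proof of Lemma \ref{lem:SFQE}: $\sum_{k\ge1}\frac{k r^{2(k-1)}}{(k+m)^2}\le \frac{1}{m+1}+\log\bigl(1-\frac{1}{2(m+1)\log r}\bigr)$ via \eqref{est:Ei} and the integral comparison in \eqref{eq3:lemSFQE}, and $\sum_{k\ge1} k r^{2(k-1)}=\frac{1}{(1-r^2)^2}$. Combining, $\|(rT)^m x\|_{rT}^2 \le r^{2m}\bigl[2(c_{1,T}^R)^2\bigl(\frac{1}{m+1}+\log(1-\frac{1}{2(m+1)\log r})\bigr)+\frac{2Pb^R(T)^2}{(1+r)^2}\bigr]\|x\|^2$, and bounding $\frac{1}{m+1}\le1$ and $\frac{1}{(1+r)^2}\le1$ gives $2(c_{1,T}^R)^2 r^{2m}\bigl(b_R+\log(1-\frac{1}{2(m+1)\log r})\bigr)\|x\|^2$ with $b_R=1+\frac{Pb^R(T)^2}{(c_{1,T}^R)^2}$, i.e.\ the claimed bound with $a_R=\sqrt2\,c_{1,T}^R$. (The fact that $rT$ is again $R$-Ritt, needed for $\|\cdot\|_{rT}$ to make sense, follows just as the corresponding remark for Tadmor--Ritt operators.)

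The only genuinely non-routine point — and the step I would flag as the main obstacle — is justifying the two applications of $R$-boundedness when the series is infinite rather than finite: Definition \ref{def:RRitt} is stated for finite families. The clean way around this is to carry out the whole computation first with the sum truncated at $k\le N$ (all terms finite, $R$-boundedness applies verbatim), obtain the stated bound with constants independent of $N$, and then let $N\to\infty$; the partial sums converge in ${\rm Rad}(X)$ precisely because the right-hand side is uniformly bounded and the tails are Cauchy (monotone convergence of the scalar majorant). A secondary minor point is the Rademacher contraction principle for scalar coefficients, $\|\sum_k \veps_k\otimes \lambda_k x\|_{{\rm Rad}(X)} = \bigl(\sum_k|\lambda_k|^2\bigr)^{1/2}\|x\|$, which is immediate here since all the $\lambda_k x$ lie on a single line and $(\veps_k)$ is orthonormal in $L^2(I)$; I would simply cite \eqref{eq:RademacherParseval}. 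Everything else is identical to Lemma \ref{lem:SFQE}, so the write-up can be kept very short, essentially "replace $\|\cdot\|$ by $\|\cdot\|_{{\rm Rad}(X)}$ and invoke $R$-boundedness in place of uniform boundedness."
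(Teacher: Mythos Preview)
Your proposal is correct and follows essentially the same approach as the paper: split the term $r^{k}T^{k+m}-r^{k-1}T^{k-1+m}$ into a difference part and a $(1-r)$-part, pull out the operators via $R$-boundedness, reduce the remaining ${\rm Rad}(X)$-norms of scalar multiples of $x$ to $\ell^2$-sums via Parseval, and then recycle the scalar estimates from Lemma~\ref{lem:SFQE}. Your explicit truncation-and-limit argument for applying $R$-boundedness to the infinite sum is a point the paper glosses over, so your write-up is in fact slightly more careful than the original.
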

\begin{proof}
The proof technique is very similar to the proof of Lemma \ref{lem:SFQE}. Therefore, we will focus on the arguments involving $R$-boundedness.
Since $rT$ is a Tadmor--Ritt operator, we have, see \ref{eq:SQFE2},
\begin{align}
\|T^{m}x\|_{rT}={}&\left\|\sum_{k=1}^{\infty} \veps_{k}\otimes k^{\frac{1}{2}}(r^{k}T^{k+m}x-r^{k-1}T^{k-1+m}x)\right\|_{{\rm Rad}(X)}\notag\\
		\leq{}&\left\|\sum_{k=1}^{\infty}\veps_{k}\otimes\left[(T^{k+m}-T^{k-1+m})+(1-r)T^{k+m}\right] k^{\frac{1}{2}}r^{k-1}x\right\|_{{\rm Rad}(X)}\notag\\
		\leq{}&c_{1,T}^{R}\left\|\sum_{k=1}^{\infty}\veps_{k}\otimes \frac{k^{\frac{1}{2}}r^{k-1}}{k+m}x\right\|_{{\rm Rad}(X)}+ \notag\\{}&\qquad+(1-r)Pb^{R}(T) \left\|\sum_{k=1}^{\infty}\veps_{k}\otimes k^{\frac{1}{2}}r^{k-1}x\right\|_{{\rm Rad}(X)} ,\label{eq2:lemSFQE2}
\end{align}
where the last step follows since $T$ is $R$-Ritt. By the definition of the ${\rm Rad}(X)$-norm, and Parseval's identity (for $L^{2}[0,1]$), the first norm in \eqref{eq2:lemSFQE2} equals
\begin{align*}
	\left\|\sum_{k=1}^{\infty}\veps_{k}\otimes \frac{k^{\frac{1}{2}}r^{k-1}}{k+m}x\right\|_{{\rm Rad}(X)}^{2} ={}& \int_{0}^{1} \left\|\sum_{k=1}^{\infty} \veps_{k}(t)\tfrac{k^{\frac{1}{2}}r^{k-1}}{k+m}x\right\|_{X}^{2}dt\\
	 ={}&\|x\|^{2}\int_{0}^{1}\left|\sum_{k=1}^{\infty}\veps_{k}(t)\tfrac{k^{\frac{1}{2}}r^{k-1}}{k+m}\right|^{2} dt\\
	 ={}&\|x\|^{2}\sum_{k=1}^{\infty}\left|\frac{k^{\frac{1}{2}}r^{k-1}}{k+m}\right|^{2}.
\end{align*}
The remaining series can be estimated as in the Hilbert space proof. Analogously, the second norm in \eqref{eq2:lemSFQE2} can be computed. Therefore, we derive, 
\begin{align*}
		\|(rT)^{m}x\|_{rT}\leq{}&r^{m}\left[c_{1,T}^{R}\left(\frac{1}{m+1}+\log\left(1-\frac{1}{2(m+1)\log r}\right)\right)^{\frac{1}{2}}+\frac{Pb^{R}(T)}{(1+r)}\right]\|x\|\\
			\leq{}&\sqrt{2}c_{1,T}^{R}r^{2m}\left(b_{R}+\log\left(1-\frac{1}{2(m+1)\log r}\right)\right)^{\frac{1}{2}}\|x\|,
\end{align*}
for $b_{R}=1+\frac{Pb^{R}(T)^{2}}{(c_{1,T}^{R})^{2}}$. 
\end{proof}
We further need the generalization of Lemma \ref{lem:expbdd} to (abstract) square function estimates.
\begin{lemma}\label{lem:expbdd2}
Let $T$ be a $R$-Ritt operator on a Banach space $X$. Then, the following are equivalent.
\begin{enumerate}[label={\textit{(\roman*)}}]
\item\label{lemR:expbddit1} $T$ satisfies (abstract) square function estimates.
\item\label{lemR:expbddit2} $rT$  satisfies (abstract) square function estimates uniform in $r\in(0,1)$,  
\begin{equation*}
     \exists K>0\ \forall r\in(0,1)\forall x\in X:\quad		\|x\|_{rT} \leq K\ \|x\|.
\end{equation*}
\end{enumerate}
\end{lemma}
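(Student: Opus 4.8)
The plan is to prove the two implications separately, modelling the argument on the classical proof of the sectorial analog \cite[Proposition~3.4]{LeMerdy2003} but working throughout with the discrete square function $\|\cdot\|_{rT}$ built from the increments $k^{1/2}(rT)^k - k^{1/2}(rT)^{k-1}$ and the ${\rm Rad}(X)$-norm. The direction \ref{lemR:expbddit2}$\Rightarrow$\ref{lemR:expbddit1} should be the easier one: one lets $r\nearrow1$ and uses a limiting/monotonicity argument. Concretely, for fixed $N\in\N$ and $x\in X$, the finite partial sum $\sum_{k=1}^N \veps_k\otimes k^{1/2}((rT)^k x-(rT)^{k-1}x)$ converges in ${\rm Rad}(X)$ to $\sum_{k=1}^N\veps_k\otimes k^{1/2}(T^kx-T^{k-1}x)$ as $r\nearrow 1$, since each term converges in $X$ and the sum is finite. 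Hence the ${\rm Rad}(X)$-norm of the length-$N$ truncation of $\|x\|_T$ is $\le \liminf_{r\nearrow1}\|x\|_{rT}\le K\|x\|$, uniformly in $N$; letting $N\to\infty$ (the truncations are nondecreasing in norm by the contraction property of conditional expectations onto the span of $\veps_1,\dots,\veps_N$ in $L^2(I,X)$) gives $\sum_k\veps_k\otimes k^{1/2}(T^kx-T^{k-1}x)\in{\rm Rad}(X)$ with $\|x\|_T\le K\|x\|$.

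For \ref{lemR:expbddit1}$\Rightarrow$\ref{lemR:expbddit2}, the idea is to express the increments of $rT$ in terms of those of $T$ together with a correction and then $R$-sum. Writing $(rT)^k x-(rT)^{k-1}x = r^{k-1}\big((T^kx-T^{k-1}x) + (r-1)T^k x\big)$, one splits $\|x\|_{rT}$ into two ${\rm Rad}(X)$-sums. The first involves $\sum_k \veps_k\otimes k^{1/2} r^{k-1}(T^kx-T^{k-1}x)$: since multiplication of the $k$-th Rademacher coordinate by the scalar $r^{k-1}\in(0,1]$ is a contraction on ${\rm Rad}(X)$ (this is the standard fact that a bounded multiplier sequence acts boundedly, and a sequence in $[0,1]$ with bound~$1$), this term is $\le \|x\|_T\le K_T\|x\|$. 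The second involves $(r-1)\sum_k\veps_k\otimes k^{1/2}r^{k-1}T^k x$; here one uses $R$-boundedness of $\{T^k:k\in\N\}$ (with bound $Pb^R(T)$, available since $T$ is $R$-Ritt) to pull out the operators, reducing to the scalar sum $\big(\sum_k k r^{2(k-1)}\big)^{1/2}\|x\|\cdot Pb^R(T)\cdot|1-r| = \frac{|1-r|}{1-r^2}Pb^R(T)\|x\| = \frac{Pb^R(T)}{1+r}\|x\|\le Pb^R(T)\|x\|$, exactly as in the proof of Lemma~\ref{lem:SFQE2}. Combining, $\|x\|_{rT}\le (K_T+Pb^R(T))\|x\|$ for all $r\in(0,1)$.

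The main obstacle is making the "scalar multiplier is a contraction on ${\rm Rad}(X)$" step rigorous in the form actually needed: one must argue that replacing $\veps_k\otimes y_k$ by $\veps_k\otimes \lambda_k y_k$ with $(\lambda_k)\subset[-1,1]$ does not increase the ${\rm Rad}(X)$-norm, and simultaneously that the resulting (a priori only formal) series still lies in ${\rm Rad}(X)$ given that $\sum_k\veps_k\otimes y_k$ does. This follows from a Kahane-type contraction principle / the fact that a diagonal multiplier with bounded symbol is bounded on the closed span of the $\veps_k$ in $L^2(I,X)$ — a routine but slightly technical point that should be cited from \cite{KunstmannWeis04}. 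A secondary technical care point is the monotonicity of the finite truncations of $\|x\|_T$ used in the first implication, which again rests on orthogonal projections in $L^2(I,X)$ being contractive; both points are standard in the ${\rm Rad}(X)$ machinery and can be invoked without detailed proof.
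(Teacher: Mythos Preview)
Your proposal is correct and follows essentially the same route as the paper. Both proofs rest on the algebraic identity $(rT)^{k}-(rT)^{k-1}=r^{k-1}\big[(T^{k}-T^{k-1})+(r-1)T^{k}\big]$ (equivalently, the paper's $(I-T)T^{k}=(I-rT)T^{k}+(1-r)T^{k+1}$), handle the correction term via the $R$-boundedness of $\{T^{k}\}$ together with the scalar computation $\sum_{k}kr^{2(k-1)}=(1-r^{2})^{-2}$, and pass to the limit $r\nearrow1$ for the implication \ref{lemR:expbddit2}$\Rightarrow$\ref{lemR:expbddit1}. The only cosmetic difference is that the paper phrases the limiting step as ``Fatou's lemma'' applied to the $r^{k}$-weighted Rademacher sum, whereas you work with finite truncations and the contractivity of the projections $P_{N}$; and you make explicit the use of Kahane's contraction principle for the multiplier $(r^{k-1})_{k}$ in the direction \ref{lemR:expbddit1}$\Rightarrow$\ref{lemR:expbddit2}, which the paper subsumes under ``similar estimation''.
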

\begin{proof}
The proof is similar to one for the continuous time analog \cite[Proposition 3.4]{LeMerdy2003} and is based on using the identity
\begin{equation}\label{eq:idexpbdd}
(I-T)T^{k}x=(I-rT)T^{k}x+(1-r)T^{k+1}x.
\end{equation}
This yields, using that $T$ is $R$-Ritt,
\begin{align*}
	\left\|\sum\nolimits_{k=1}^{\infty}\veps_{k}\otimes k^{\frac{1}{2}}r^{k}(I-T)T^{k}x\right\|_{{\rm Rad}(X)}\leq{}& \left\|\sum\nolimits_{k=1}^{\infty}\veps_{k}\otimes k^{\frac{1}{2}}(I-rT)(rT)^{k}x\right\|_{{\rm Rad}(X)} +\notag\\{}&\ +Pb^{R}(T)(1-r)\left\|\sum\nolimits_{k=1}^{\infty}\veps_{k}\otimes k^{\frac{1}{2}}r^{k}x\right\|_{{\rm Rad}(X)}.
\end{align*}
It is easy to see that the second term on the right-hand is bounded in $r\in(0,1)$, because 
the ${\rm Rad}(X)$-norm  equals $(\sum_{k=1}^{\infty}kr^{2k})^{\frac{1}{2}}\|x\|=\|x\|(1-r^{2})^{-1}$ by Parseval's identity. Hence, by Fatou's lemma, we get that \ref{lemR:expbddit2} implies \ref{lemR:expbddit1}.
The other direction also follows, with a similar estimation, from \eqref{eq:idexpbdd}.
\end{proof}
The Banach space version of Theorem \ref{thm3} now follows completely analogously to the Hilbert space proof with Lemmata \ref{lem:SFQE2} and \ref{lem:expbdd2} (instead of Lemmata \ref{lem:SFQE} and \ref{lem:expbdd}).
\begin{theorem}\label{thm32}
Let $T$ be a $R$-Ritt operator on a Banach space $X$. Assume that either $T$ or $T^{*}$ satisfies (abstract) square function estimates. Then, for integers $0\leq m\leq n$ and $p(z)=\sum_{j=m}^{n}a_{j}z^{j}$,
\begin{equation*}
\|p(T)\| \leq a_{R}cK_{T}e^{\frac{1}{2}}\cdot  \sqrt{b_{R}+\log\frac{n+2}{m+1}}\cdot \|p\|_{\infty,\D},
\end{equation*}
with $K_{T},a_{R},b_{R}$ and $c$ defined in \eqref{eq:SQFE2}, Lemma \ref{lem:SFQE2} and Theorem \ref{thm:LeMerdy}, respectively.
\end{theorem}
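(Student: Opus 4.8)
The plan is to mimic the proof of Theorem~\ref{thm3} line by line, replacing every Hilbert space estimate by its $R$-bounded analog. First I would factor $p(z)=z^{m}q(z)$ with $\deg q=n-m$ and, for $r\in(0,1)$ to be chosen later, write $p_{1/r}(z)=p(z/r)$, so that $p_{1/r}(rT)=p(T)$ and, for all $x\in X$,
\begin{equation*}
p(T)x=q_{1/r}(rT)(rT)^{m}x.
\end{equation*}
Since $X$ need not be a Hilbert space, I work with the duality pairing: apply Theorem~\ref{thm:LeMerdy} to the $R$-Ritt operator $rT$ (which is $R$-Ritt of $R$-type $\le\theta$, as $rT$ is again $R$-Ritt) and the polynomial $q_{1/r}$, obtaining for $x\in X$, $y\in X^{*}$,
\begin{equation*}
|\langle y, q_{1/r}(rT)(rT)^{m}x\rangle|\leq c\cdot\|q_{1/r}\|_{\infty,\mathcal{B}_{\eta}}\cdot\|(rT)^{m}x\|_{rT}\cdot\|y\|_{(rT)^{*}},
\end{equation*}
for any fixed $\eta\in(\theta,\frac{\pi}{2})$; here I use $\mathcal{B}_{\eta}\subset\D$ to pass to the disc norm of $q_{1/r}$.

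Next I handle the three factors on the right. For the norm $\|q_{1/r}\|_{\infty,\D}$, Lemma~\ref{it1:Bernstein} (with $\alpha=\frac{\pi}{2}$, degree $n-m$) together with the maximum principle gives $\|q_{1/r}\|_{\infty,\D}\leq r^{m-n}\|q\|_{\infty,\D}=r^{m-n}\|p\|_{\infty,\D}$. W.l.o.g.\ assume $T^{*}$ satisfies abstract square function estimates; then $(rT)^{*}=rT^{*}$ and the uniform bound $\|y\|_{rT^{*}}\leq K_{T}\|y\|$ for all $r\in(0,1)$ comes from Lemma~\ref{lem:expbdd2} applied to $T^{*}$ (note $T^{*}$ is $R$-Ritt on $X^{*}$, e.g.\ since taking adjoints preserves $R$-boundedness on the relevant class, or simply invoke Lemma~\ref{lem:expbdd2} as stated). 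Finally, $\|(rT)^{m}x\|_{rT}$ is controlled by Lemma~\ref{lem:SFQE2}: it is at most $a_{R}r^{m}\sqrt{b_{R}+\log(1-\frac{1}{2(m+1)\log r})}\,\|x\|$. Assembling, and taking the supremum over $x,y$ of norm one,
\begin{equation*}
\|q_{1/r}(rT)(rT)^{m}\|\leq a_{R}cK_{T}\sqrt{b_{R}+\log\!\left(1-\tfrac{1}{2(m+1)\log r}\right)}\cdot r^{2m-n}\cdot\|p\|_{\infty,\D}.
\end{equation*}

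It remains to optimize in $r$. Choosing $r=e^{-\frac{1}{2(n-m+1)}}$ one computes $1-\frac{1}{2(m+1)\log r}=1+\frac{n-m+1}{m+1}=\frac{n+2}{m+1}$ and $r^{2m-n}=e^{\frac{n-2m}{2(n-m+1)}}<e^{\frac12}$, and since $p(T)=q_{1/r}(rT)(rT)^{m}$ this yields
\begin{equation*}
\|p(T)\|\leq a_{R}cK_{T}e^{\frac12}\sqrt{b_{R}+\log\tfrac{n+2}{m+1}}\cdot\|p\|_{\infty,\D},
\end{equation*}
which is the claim. I expect the only genuinely delicate point to be bookkeeping: confirming that $rT$ is $R$-Ritt with $R$-type at most $\theta$ (so Theorem~\ref{thm:LeMerdy} applies with a uniform, or at least harmless, constant $c$), that $T^{*}$ is $R$-Ritt whenever $T$ is on the spaces under consideration, and that the constants $a_R,b_R,K_T,c$ are the exact ones referenced in the statement. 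Everything else is a mechanical transcription of the Hilbert space argument, since Lemmata~\ref{lem:SFQE2} and \ref{lem:expbdd2} have already been set up precisely to make the Rademacher/$R$-bounded estimates run in parallel to the $\ell^{2}$ ones.
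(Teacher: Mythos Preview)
Your proposal is correct and follows exactly the approach the paper takes: the paper's proof of this theorem simply states that it ``follows completely analogously to the Hilbert space proof with Lemmata~\ref{lem:SFQE2} and~\ref{lem:expbdd2} (instead of Lemmata~\ref{lem:SFQE} and~\ref{lem:expbdd}),'' which is precisely the line-by-line transcription you carry out. The bookkeeping concerns you flag (that $rT$ is $R$-Ritt, uniformity of $c$, etc.) are left implicit in the paper as well.
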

\section{Sharpness of the estimates}\label{subsec:SharpnessofEstimates}
It is natural to ask whether the deduced functional calculus estimates from Theorems \ref{thm2} and \ref{thm3},
\begin{equation}\label{eq1:sharpness}
	\|p(T)\| \leq aC(T)\left(\log C(T)+b+\log\frac{n+1}{m+1}\right)\|p\|_{\infty,\D},
\end{equation}
and
\begin{equation}\label{eq2:sharpness}
\|p(T)\| \leq a_{2}cK_{T}e^{\frac{1}{2}}\cdot  \sqrt{b_{2}+\log\frac{n+2}{m+1}}\cdot \|p\|_{\infty,\D},
\end{equation}
  for $p\in H^{\infty}[m,n]$, that is $p(z)=\sum_{k=m}^{n}a_{k}z^{k}$, are sharp. Clearly, here `sharpness'  has different aspects depending on the variables $C(T),m,n$ it is referring to. 
For a clear discussion, we distinguish between the following questions.
\begin{enumerate}[label=(\Alph*), itemsep=5pt,leftmargin=30pt]
	\item\label{it:sharpness1} Is \eqref{eq1:sharpness} sharp in the variables $m,n$, with $0\leq m\leq n$?
	\item\label{it:sharpness2} Is \eqref{eq1:sharpness} sharp in the variable $C(T)$ for (some) fixed $m,n$?
	\item \label{itsharpness3} Question \ref{it:sharpness1} for \eqref{eq2:sharpness}.
	\item  \label{itsharpness4} Question \ref{it:sharpness2} for \eqref{eq2:sharpness}.
\end{enumerate}
To answer these questions, we introduce the quantity
\begin{equation}\label{eq:C(T,m,n)}
 C(T,m,n)=\sup\left\{\|p(T)\|: p\in H^{\infty}[m,n], \|p\|_{\infty,\D}\leq 1\right\}.
\end{equation}
Question \ref{it:sharpness1} was discussed Vitse in \cite[Remark 2.6]{Vitse2005b} using the prior works \cite{Vitse04Turndown,Vitse05}.
In particular, she showed that if $X$ contains a complemented isomorphic copy of $\ell^{1}$ or $\ell^{\infty}$ (e.g., infinite-dimensional $L^{1}$ or $C(K)$ spaces), then there exists a Tadmor--Ritt operator on $X$ such that 
\begin{equation*}
	C(T,m,n) \gtrsim \log\frac{ne}{m},		
\end{equation*}
where the involved constant only depends on $X$ and is thereby linked with constant $C(T)$. However, the precise dependence on $C(T)$ is not apparent there. 
If $X$ is an (infinite-dimensional) Hilbert space (more, generally if the Banach space $X$ contains a complemented isomorphic copy of $\ell^{2}$), then for any $\delta\in(0,1)$,
there exists a Tadmor--Ritt operator such that 
\begin{equation*}
	C(T,m,n)\gtrsim \left(\log\frac{ne}{m}\right)^{\delta}.
\end{equation*}
These statements can be generalized to more general spaces $X$ that \textit{uniformly contain uniform copies} of $\ell_{n}^{1}$ (or $\ell_{n}^{2}$ respectively). We refer to \cite{Vitse04,Vitse2005b} for details. \medskip\\
Question \ref{it:sharpness2} can be split up in several cases. If $m=0$, hence $p$ is an arbitrary polynomial of degree $n$, \eqref{eq1:sharpness} implies that $C(T,0,n)\lesssim C(T)(\log C(T)+\log(n+1))$. 
Hence, we observe `linear' asymptotic behavior in $C(T)$ as $n\to\infty$.
In fact, in \cite[Theorem 2.1]{Vitse04} it is shown that it is indeed linear, namely 
\begin{equation*}
	C(T,0,n) \leq (C(T)+1)\log(e^{2}n),
\end{equation*}
and there exists a $T$ on some Banach space $X$ such that $C(T,0,n)\sim \log(e^{2}n)$. We point out that the proof technique, \cite[Theorem 2.1]{Vitse04}, requires $m=0$. \medskip\\\\
However, for $m=n$, Question \ref{it:sharpness2} reduces to the prominent question of the optimal power-bound for $T$. As mentioned in Corollary \ref{cor1}, \eqref{eq1:sharpness} yields 
\begin{equation*}
	C(T,n,n)=\|T^{n}\|\lesssim C(T)(\log C(T) +1),
\end{equation*}
for all $n$. This is so-far the best known power-bound for Tadmor--Ritt operators, see also \cite{Bakaev03}. It remains open whether this can be replaced by a linear $C(T)$-dependence.
Furthermore, motivated by the Kreiss Matrix Theorem \eqref{eq:SpijkerKMT}, it is not clear whether for $N$-dimensional spaces $X$, an estimate of the form
\begin{equation}\label{eq:FiniteTRclaim}
	Pb(T) \leq C(T) g(N)
\end{equation}
for some scalar function $g$ can be achieved, where $g(N)\in o(N)$. Note that the estimate for $g(N)=eN$ trivially holds by \eqref{eq:SpijkerKMT} and the fact that $C_{Kreiss}(T)\leq C(T)$.
\medskip\\
Let us turn to Question \ref{itsharpness3} now.  We want to show sharpness of 
\begin{equation*}
C(T,m,n)\lesssim \sqrt{\log\frac{n+1}{m+1}}
\end{equation*}
under the assumption that $T$ satisfies square function estimates. Therefore, we  construct $T$ as a Schauder basis multiplier, which is a well-known technique to construct unbounded calculi, see e.g., \cite[Chapter 9]{haasesectorial} and \cite{BaillonClement91}, where it was introduced. Let $X$ be a separable infinite-dimensional Hilbert space with a bounded Schauder basis $\left\{\psi_{k}\right\}$. For a sequence $(\lambda_{n})\subset[0,1]$, define the bounded operator $T=\mathcal{M}_{\lambda}$ by 
\begin{equation*}
Tx=\left(\sum\nolimits_{k}x_{k}\psi_{k}\right)=\sum\nolimits_{k}\lambda_{k}x_{k}\psi_{k}, 
\end{equation*}
for finite sequences $(x_{k})\subset \C$. Let $\lambda_{n}=1-2^{-n}$, then $T$ is Tadmor--Ritt, see \cite[Proposition 8.2]{LeMerdy2014a}. 
With this setting we can use the following argument from \cite[Proof of Theorem 2.1]{Vitse05}. Let $\delta\in(0,1)$. 
If for the uniform basis constant $ub(\left\{\psi_{k}\right\}_{k=1}^{N})$ it holds that $ub(\left\{\psi_{k}\right\}_{k=1}^{N})\gtrsim N^{\delta}$, i.e.
\begin{equation}\label{eq:UB}
	\exists c>0~ \forall N\in\N:\quad \sup\left\{\left\|\sum_{k=1}^{N}\alpha_{k}x_{k}\psi_{k}\right\|:|\alpha_{k}|\leq1, \left\|\sum_{k=1}^{N}x_{k}\psi_{k}\right\|\leq1\right\} \geq c N^{\delta},
\end{equation}
then $C(T,m,n)\gtrsim  \left(\log\frac{n+1}{m+1}\right)^{\delta}$.

As for sectorial Schauder multipliers, it holds that $T$ satisfies square function estimates if the basis is \textit{Besselian}, i.e., $ \exists c_{\psi}>0$
\begin{equation}\label{eq:RittBesselian}
	c_{\psi}\left(\sum\nolimits_{k}|x_{k}|^{2}\right)^{\frac{1}{2}} \leq \left\|\sum\nolimits_{k}x_{k}\psi_{k}\right\|,
\end{equation}
for finite sequences $(x_{k})\subset\C$, see \cite[Theorem 5.2]{LeMerdy2003} and \cite[Theorem 8.2]{LeMerdy2014a}. 
Note that \eqref{eq:RittBesselian} already implies that $ub(\left\{\psi_{k}\right\}_{k=1}^{N})\leq c_{\psi}m(\psi) \sqrt{N}$, where $m(\psi)=\sup_{k}\|\psi_{k}\|$.
It remains to find a Besselian basis $\left\{\psi_{k}\right\}$  such that \eqref{eq:UB} is fulfilled for $\delta\in(0,\frac{1}{2})$. 
Indeed, such an example can be constructed for an $L^{2}$-space on the unit circle with suitable weight, see \cite[Thm.~5.2]{LeMerdy2003}, \cite{Schwenninger15a}, and \cite[Section 4.3]{SpijkerTracognaWelfert03}.
In fact, the example in \cite[Thm.~4.5]{Schwenninger15a} gives a basis
\begin{equation*}
\psi_{2k}(t)=|t|^{-\beta}e^{ikt},\quad \psi_{2k+1}(t)=|t|^{-\beta}e^{-ikt},\ k\in\N_{0},
\end{equation*}
 (there, the notation is $\psi^{*}$) with $\beta\in(\frac{1}{3},\frac{1}{2})$. Moreover, it is shown that there exist elements $x,y\in L^{2}$ such that 
\begin{equation*}
|x_{n}|\sim n^{3\beta-1}, \qquad |y_{n}|\sim n^{\beta-1}, \ n\in\N,
\end{equation*}
where $x=\sum_{n}x_{n}\psi_{n}$ and $y\in\sum_{n}y_{n}\psi_{n}^{*}$, and where $\left\{\psi_{n}^{*}\right\}$ denotes the dual basis such that $\langle \psi_{k}^{*},\psi_{n}\rangle =\delta_{nk}$. Choosing $|\alpha_{n}|=1$ such that $\alpha_{n}x_{n}y_{n}\in\R_{\geq0}$, we deduce 
\begin{align*}
	|\langle y, \sum_{n=1}^{N}\alpha_{n}x_{n}\psi_{n}\rangle| = {}&\sum_{n=1}^{N}\alpha_{n}x_{n}y_{n}\gtrsim \sum_{n=1}^{N}n^{3\beta-2}\sim N^{3\beta-1}. 
\end{align*}
Since $\|\sum_{n=1}^{N}x_{n}\psi_{n}\|\leq b(\psi) \|x\|$, \eqref{eq:UB} follows for $\delta=3\beta-1\in(0,\frac{1}{2})$.
Therefore, we have proved the following result, which answers \ref{itsharpness3} for Hilbert spaces.
\begin{theorem}There exists a Hilbert space such that for any $\delta\in(0,\frac{1}{2})$ there exists a Tadmor--Ritt operator $T$ which satisfies square function estimates and
\begin{equation*}
 C(T,m,n)\gtrsim \left(\log\frac{n+1}{m+1}\right)^{\delta}
 \end{equation*}
 holds, where $C(T,m,n)$ is defined in \eqref{eq:C(T,m,n)}. Note that the involved constants depend on $\delta$.
\end{theorem}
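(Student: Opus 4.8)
The plan is to realize $T$ as a Schauder basis multiplier and to assemble three ingredients: that such multipliers with a suitable eigenvalue sequence are Tadmor--Ritt operators which inherit square function estimates from a Besselian basis; a Vitse-type lower bound relating $C(T,m,n)$ to the uniform basis constant $ub(\{\psi_{k}\}_{k=1}^{N})$ on the first $N\sim\log\frac{n+1}{m+1}$ basis vectors; and an explicit weighted $L^{2}$-basis on the circle whose uniform basis constant grows like $N^{3\beta-1}$.

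First I would fix a separable infinite-dimensional Hilbert space $X$ with a bounded Schauder basis $\{\psi_{k}\}$ and set $Tx=\sum_{k}\lambda_{k}x_{k}\psi_{k}$ with $\lambda_{k}=1-2^{-k}$; by \cite[Proposition 8.2]{LeMerdy2014a} this operator is Tadmor--Ritt. If in addition the basis is Besselian, i.e.\ \eqref{eq:RittBesselian} holds, then $T$ satisfies the square function estimates of Definition \ref{Def:SQFE}, by \cite[Theorem 5.2]{LeMerdy2003} and \cite[Theorem 8.2]{LeMerdy2014a}. Thus the standing hypotheses on $T$ in the statement hold for every bounded Besselian basis.

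Next I would establish the lower bound for $C(T,m,n)$. Fix $\delta\in(0,1)$ and assume \eqref{eq:UB}. The eigenvalue sequence $\lambda_{k}=1-2^{-k}$ is chosen precisely so that, on each dyadic block of indices, the powers $\lambda_{k}^{j}$ occurring when a polynomial $p\in H^{\infty}[m,n]$ is evaluated at $T$ stay comparable to a fixed constant; hence for $p(z)=\sum_{k=m}^{n}a_{k}z^{k}$ with $\|p\|_{\infty,\D}\le1$ one effectively has $\sim\log\frac{n+1}{m+1}$ ``free'' indices on which the $a_{k}$ may be prescribed. Evaluating $p(T)$ on a suitable finite linear combination of the $\psi_{k}$ and invoking \eqref{eq:UB} with $N\sim\log\frac{n+1}{m+1}$ then yields $C(T,m,n)\gtrsim(\log\frac{n+1}{m+1})^{\delta}$. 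This is the argument of \cite[Proof of Theorem 2.1]{Vitse05}; it is the step I expect to require the most care, since the correspondence between the ``free'' polynomial indices and the basis vectors has to be made quantitative.

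Finally I would supply the concrete basis: following \cite[Thm.~4.5]{Schwenninger15a}, on a suitably weighted $L^{2}$-space over the unit circle take $\psi_{2k}(t)=|t|^{-\beta}e^{ikt}$, $\psi_{2k+1}(t)=|t|^{-\beta}e^{-ikt}$ for $k\in\N_{0}$, with $\beta\in(\tfrac{1}{3},\tfrac{1}{2})$. This is a bounded Besselian basis, so the associated multiplier $T$ with $\lambda_{k}=1-2^{-k}$ is Tadmor--Ritt and satisfies square function estimates. Moreover \cite[Thm.~4.5]{Schwenninger15a} provides $x=\sum_{n}x_{n}\psi_{n}$ and $y=\sum_{n}y_{n}\psi_{n}^{*}$ with $|x_{n}|\sim n^{3\beta-1}$ and $|y_{n}|\sim n^{\beta-1}$; choosing unimodular $\alpha_{n}$ with $\alpha_{n}x_{n}y_{n}\ge0$ gives $|\langle y,\sum_{n=1}^{N}\alpha_{n}x_{n}\psi_{n}\rangle|=\sum_{n=1}^{N}\alpha_{n}x_{n}y_{n}\gtrsim\sum_{n=1}^{N}n^{3\beta-2}\sim N^{3\beta-1}$, while $\|\sum_{n=1}^{N}x_{n}\psi_{n}\|\le b(\psi)\|x\|$ and $\|y\|<\infty$. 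Hence \eqref{eq:UB} holds with $\delta=3\beta-1$, which runs through all of $(0,\tfrac{1}{2})$ as $\beta$ runs through $(\tfrac{1}{3},\tfrac{1}{2})$; together with the previous step this gives $C(T,m,n)\gtrsim(\log\frac{n+1}{m+1})^{\delta}$, with constants depending on $\delta$ through $\beta$, and the proof would be complete.
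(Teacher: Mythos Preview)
Your proposal is correct and follows the paper's own proof essentially step for step: the same Schauder multiplier $T$ with $\lambda_{k}=1-2^{-k}$, the same citation of \cite[Proposition 8.2]{LeMerdy2014a} for the Tadmor--Ritt property, the same appeal to \cite[Theorem 5.2]{LeMerdy2003} and \cite[Theorem 8.2]{LeMerdy2014a} for square function estimates via a Besselian basis, the same reduction via \cite[Proof of Theorem 2.1]{Vitse05} to the uniform basis constant estimate \eqref{eq:UB}, and the same weighted-$L^{2}$ basis from \cite[Thm.~4.5]{Schwenninger15a} with the same $x,y$ and the same computation yielding $\delta=3\beta-1\in(0,\tfrac{1}{2})$. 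The paper, like you, does not reprove Vitse's reduction step but simply cites it.
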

An open question is whether there exists an $R$-Ritt operator on a Banach space such that $T$ satisfies square function estimates and $C(T,m,n)\gtrsim \left(\log\frac{n+1}{m+1}\right)^{\frac{1}{2}}$.\medskip\\ 
By $c_{1,T}\lesssim C(T)^{3}$, see \cite{Vitse2005b}, and $c\lesssim Pb(T)^{3}c_{1,T}$, see \cite[Proof of Theorem 7.3]{LeMerdy2014a}, we can track $C(T)$ in the constants of the estimate in Theorem \ref{thm3}. This yields a $C(T)$-dependence, which seems far from being sharp. Hence, the answer to \ref{itsharpness4} is probably `no'.
\section{Further results}\label{sec:Furtherresults} \label{subsec:SharpnessEstimates}
As a direct corollary of the improvements of Vitse's result, we get the following result for the Besov space functional calculus of $T$, which in turn is a slight improvement of \cite[Theorem 2.2]{Vitse2005b}.
For details of the following notions and facts see \cite{Vitse2005b} and the references therein.
Recall that the \textit{Besov space} $B_{\infty,1}(\D)$ is defined by the functions $f\in H(\D)$ such that 
\begin{equation*}
	\|f\|_{B} :=\|f\|_{\infty,\D}+\int_{0}^{1}\max_{\alpha}|f'(re^{i\alpha})|dr<\infty.
\end{equation*}
It is well known that there exists an equivalent definition via the dyadic decomposition $f=\sum_{n=0}^{\infty}W_{n}\ast f$, where $W_{n}$, $n\geq1$ are shifted Fejer type polynomials, whose Fourier coefficients $\widehat{W}_{n}(k)$ are the integer values of the triangular-shaped function supported in $[2^{n-1},2^{n+1}]$ with peak $\widehat{W}_{n}(2^{n})=1$ and $W_{0}(z)=1+z$. Here $(g\ast f)(z)=\sum_{k=0}^{\infty}\hat{g}(k)\hat{f}(k)z^{k}$. Then,
\begin{equation*}
	f\in B_{\infty,1}(\D) \iff f\in H(\D) \text{ and } \|f\|_{\ast}=\sum_{n=0}^{\infty}\|W_{n}\ast f\|_{\infty,\D}<\infty.
\end{equation*}
Since $W_{n}\ast f$ is a polynomial, we can use the $\|\cdot\|_{\infty,\D}$-estimate of Theorem \ref{thm2} to derive $B_{\infty,1}(\D)$-functional calculus estimates. This follows the same lines as in \cite{Vitse2005b}, however, using the improved constant dependence of our result in Theorem \ref{thm2}.
\begin{theorem}\label{thm4}
Let $T$ be a Tadmor--Ritt operator on a Banach space $X$. Then,
\begin{equation}\label{eq:thm4}
	\|f(T)\| \lesssim C(T)(\log(C(T)+1)) \|f\|_{\ast} 
\end{equation}i.e.,
for all $f\in B_{\infty,1}(\D)$, where $f(T)$ is defined by $\sum_{n=0}^{\infty}(W_{n}\ast f)(T)$.
\end{theorem}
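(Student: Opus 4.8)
The plan is to reduce the statement to the polynomial estimate of Theorem~\ref{thm2}, applied separately to each Littlewood--Paley block $W_{n}\ast f$, and then to sum. The decisive observation is that $W_{n}\ast f$ is a polynomial whose nonzero Fourier coefficients all lie in the frequency range $[2^{n-1},2^{n+1}]$ for $n\geq1$, so that, in the notation of Remark~\ref{rem1}, $W_{n}\ast f\in H^{\infty}[2^{n-1},2^{n+1}]$; for the bottom block $W_{0}\ast f=\hat f(0)+\hat f(1)z\in H^{\infty}[0,1]$. Hence Theorem~\ref{thm2}, applied with lower degree $2^{n-1}$ and top degree $2^{n+1}$, gives
\begin{equation*}
\|(W_{n}\ast f)(T)\|\leq aC(T)\left(2\log C(T)+b+\log\frac{2^{n+1}+1}{2^{n-1}+1}\right)\|W_{n}\ast f\|_{\infty,\D},
\end{equation*}
and the crucial point is that $\log\frac{2^{n+1}+1}{2^{n-1}+1}\leq\log 4$ uniformly in $n\geq1$ (and the corresponding term equals $\log 2$ for $n=0$). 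Since $C(T)\geq1$ (because $(z-1)R(z,T)\to I$ as $z\to\infty$), the bracketed factor is $\lesssim\log(C(T)+1)$ with an \emph{absolute} implied constant, independent of $n$.

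First I would make these elementary observations precise, in particular reading off the frequency localization of $W_{n}\ast f$ from the definition of the $W_{n}$, recording $C(T)\geq1$, and verifying the uniform bound on the logarithmic term. Summing the per-block estimate then yields
\begin{align*}
\left\|\sum\nolimits_{n=0}^{\infty}(W_{n}\ast f)(T)\right\|
&\leq\sum_{n=0}^{\infty}\|(W_{n}\ast f)(T)\|\\
&\lesssim C(T)\log(C(T)+1)\sum_{n=0}^{\infty}\|W_{n}\ast f\|_{\infty,\D}=C(T)\log(C(T)+1)\,\|f\|_{\ast}.
\end{align*}
In particular the series $\sum_{n}(W_{n}\ast f)(T)$ converges absolutely in $\Bo(X)$ as soon as $\|f\|_{\ast}<\infty$, so $f(T)$ is well defined on $B_{\infty,1}(\D)=\{f\in H(\D):\|f\|_{\ast}<\infty\}$; as in \cite{Vitse2005b} one also notes that on polynomials this construction coincides with the Riesz--Dunford integral \eqref{eq:RieszDunford}, which secures consistency. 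This establishes \eqref{eq:thm4}.

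There is no real obstacle here: the theorem is a direct corollary of Theorem~\ref{thm2}, and the only new ingredient compared with Vitse's argument in \cite{Vitse2005b} is that the dependence $C(T)^{5}\log(e(n+1)/m)$ of \eqref{eq:Vitse1} is replaced by $C(T)\log(C(T)+1)$ multiplied by a term which, on a dyadic block, is bounded precisely because the top and bottom frequencies differ only by the fixed factor $4$. The mildly technical points are pure bookkeeping: the frequency support of $W_{n}\ast f$, the uniform bound $\log\frac{2^{n+1}+1}{2^{n-1}+1}\leq\log 4$, the inequality $C(T)\geq1$, and the standard verification that $f(T)$ is independent of the chosen dyadic decomposition $f=\sum_{n}W_{n}\ast f$.
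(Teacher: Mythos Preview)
Your proposal is correct and follows essentially the same route as the paper: apply Theorem~\ref{thm2} to each dyadic block $W_{n}\ast f\in H^{\infty}[2^{n-1},2^{n+1}]$, observe that the ratio $(2^{n+1}+1)/(2^{n-1}+1)$ is bounded uniformly in $n$ (the paper uses the bound $5$, you use the slightly sharper $4$), and sum. Your additional remarks on $C(T)\geq1$ and consistency with the Riesz--Dunford calculus are fine but not needed for the argument.
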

\begin{proof}
Since $W_{n}\ast f \in H^{\infty}[2^{n-1},2^{n+1}]$ for $n\geq1$ and $W_{0}\ast f\in H^{\infty}[0,1]$, see Remark \ref{rem1} for the definition of $H^{\infty}[m,n]$, we can apply Theorem \ref{thm2} to derive
\begin{equation*}
	\|(W_{n}\ast f)(T)\| \leq a C(T)\left(2\log C(T)+b+\log\frac{2^{n+1}+1}{2^{n-1}+1}\right)\|W_{n}\ast f\|_{\infty,\D},
\end{equation*}
for $n\geq1$, with absolute constants $a,b>0$. Clearly, $\frac{2^{n+1}+1}{2^{n-1}+1}\leq 5$. Analogously, $\|(W_{0}\ast f)(T)\|$ can be estimated. Thus, $\sum_{n=0}^{\infty}\|(W_{n}\ast f)(T)\| \lesssim \|f\|_{\ast}$, and hence, $f(T)$ is well-defined with 
\begin{equation*}
	\|f(T)\| \leq a C(T)\left(2\log C(T)+b+\log5\right)\| f\|_{\ast}.
\end{equation*}
\end{proof}
In \cite[Theorem 2.5]{Vitse2005b} a similar $\|\cdot\|_{B_{\infty,1}(\D)}$-estimate as in \eqref{eq:thm4} is derived, but with a $C(T)$-dependence of $C(T)^{5}$.



\subsection*{Acknowledgements}
The author would like to express his deepest gratitude to Hans Zwart for numerous discussions, and many very helpful comments on the manuscript. \newline He is very thankful to Eitan Tadmor for an interesting discussion in November 2014, and for sharing with him the proof of a result in \cite{TadmorLAA}. He is grateful to Joseph Ball for being his host at the Department of Mathematics at Virginia Tech in fall 2014, where parts of this manuscript were written. He would also like to thank Mark Embree for inspiring discussions on the Kreiss Matrix Theorem during that time.




\begin{thebibliography}{10}

\bibitem{ArendtBu2002}
Wolfgang Arendt and Shangquan Bu.
\newblock The operator-valued {M}arcinkiewicz multiplier theorem and maximal
  regularity.
\newblock {\em Math. Z.}, 240(2):311--343, 2002.

\bibitem{ArhancetFacklerLeMerdy2015}
C{{\'e}}dric Arhancet, Stephan Fackler, and Christian Le~Merdy.
\newblock Isometric dilations and $h^\infty$ calculus for bounded analytic
  semigroups and ritt operators.
\newblock available at arXiv: 1504.00471, 2015.

\bibitem{ArhancetLeMerdy2014}
C{{\'e}}dric Arhancet and Christian Le~Merdy.
\newblock Dilation of {R}itt operators on {$L^p$}-spaces.
\newblock {\em Israel J. Math.}, 201(1):373--414, 2014.

\bibitem{BaillonClement91}
J.-B. Baillon and Ph. Cl{\'e}ment.
\newblock Examples of unbounded imaginary powers of operators.
\newblock {\em J. Funct. Anal.}, 100(2):419--434, 1991.

\bibitem{Bakaev03}
N.~Yu. Bakaev.
\newblock Constant size control in stability estimates under some resolvent
  conditions.
\newblock {\em Num. methods and Prog.}, 4:348--357, 2003.

\bibitem{BerksonGillespie1994}
Earl Berkson and T.~A. Gillespie.
\newblock Spectral decompositions and harmonic analysis on {UMD} spaces.
\newblock {\em Studia Math.}, 112(1):13--49, 1994.

\bibitem{Blunck2}
S{{\"o}}nke Blunck.
\newblock Analyticity and discrete maximal regularity on {$L_p$}-spaces.
\newblock {\em J. Funct. Anal.}, 183(1):211--230, 2001.

\bibitem{Blunck1}
S{{\"o}}nke Blunck.
\newblock Maximal regularity of discrete and continuous time evolution
  equations.
\newblock {\em Studia Math.}, 146(2):157--176, 2001.

\bibitem{BoroDrissiSpijker}
N.~Borovykh, D.~Drissi, and M.~N. Spijker.
\newblock A note about {R}itt's condition, related resolvent conditions and
  power bounded operators.
\newblock {\em Numer. Funct. Anal. Optim.}, 21(3-4):425--438, 2000.

\bibitem{cowlingdoustmcintoshyagi}
Michael Cowling, Ian Doust, Alan McIntosh, and Atsushi Yagi.
\newblock Banach space operators with a bounded {$H^\infty$} functional
  calculus.
\newblock {\em J. Austral. Math. Soc. Ser. A}, 60(1):51--89, 1996.

\bibitem{Crouzeixetal}
M.~Crouzeix, S.~Larsson, S.~Piskar{{\"e}}v, and V.~Thom{{\'e}}e.
\newblock The stability of rational approximations of analytic semigroups.
\newblock {\em BIT}, 33(1):74--84, 1993.

\bibitem{Ransford02}
Omar El-Fallah and Thomas Ransford.
\newblock Extremal growth of powers of operators satisfying resolvent
  conditions of {K}reiss-{R}itt type.
\newblock {\em J. Funct. Anal.}, 196(1):135--154, 2002.

\bibitem{Foguel64}
S.~R. Foguel.
\newblock A counterexample to a problem of {S}z.-{N}agy.
\newblock {\em Proc. Amer. Math. Soc.}, 15:788--790, 1964.

\bibitem{Gautschi59}
Walter Gautschi.
\newblock Some elementary inequalities relating to the gamma and incomplete
  gamma function.
\newblock {\em J. Math. and Phys.}, 38:77--81, 1959/60.

\bibitem{Gurarii71}
V.~I. Gurari{\u\i} and N.~I. Gurari{\u\i}.
\newblock Bases in uniformly convex and uniformly smooth {B}anach spaces.
\newblock {\em Izv. Akad. Nauk SSSR Ser. Mat.}, 35:210--215, 1971.

\bibitem{haasesectorial}
Markus Haase.
\newblock {\em The {F}unctional {C}alculus for {S}ectorial {O}perators}, volume
  169 of {\em Operator Theory: Advances and Applications}.
\newblock Birkh\"auser Verlag, Basel, 2006.

\bibitem{Halmos64}
P.~R. Halmos.
\newblock On {F}oguel's answer to {N}agy's question.
\newblock {\em Proc. Amer. Math. Soc.}, 15:791--793, 1964.

\bibitem{KaltonTomilov}
N.~Kalton, S.~Montgomery-Smith, K.~Oleszkiewicz, and Y.~Tomilov.
\newblock Power-bounded operators and related norm estimates.
\newblock {\em J. London Math. Soc. (2)}, 70(2):463--478, 2004.

\bibitem{KaltonPortal}
N.~J. Kalton and P.~Portal.
\newblock Remarks on {$\ell_1$} and {$\ell_\infty$}-maximal regularity for
  power-bounded operators.
\newblock {\em J. Aust. Math. Soc.}, 84(3):345--365, 2008.

\bibitem{KaltonWeis01}
N.~J. Kalton and L.~Weis.
\newblock The {$H^\infty$}-calculus and sums of closed operators.
\newblock {\em Math. Ann.}, 321(2):319--345, 2001.

\bibitem{KaltonWeisUnpublished}
Nigel Kalton and Lutz Weis.
\newblock The ${H}^{\infty}$-{F}unctional {C}alculus and {S}quare {F}unction
  {E}stimates.
\newblock {\em unpublished, available at arXiv: 1411.0472}, 2001.

\bibitem{Kreiss62}
Heinz-Otto Kreiss.
\newblock \"{U}ber die {S}tabilit{\"a}tsdefinition f{\"u}r
  {D}ifferenzengleichungen die partielle {D}ifferentialgleichungen
  approximieren.
\newblock {\em Nordisk Tidskr. Informations-Behandling}, 2:153--181, 1962.

\bibitem{KunstmannWeis04}
Peer~C. Kunstmann and Lutz Weis.
\newblock Maximal {$L_p$}-regularity for parabolic equations, {F}ourier
  multiplier theorems and {$H^\infty$}-functional calculus.
\newblock In {\em Functional analytic methods for evolution equations}, volume
  1855 of {\em Lecture Notes in Math.}, pages 65--311. Springer, Berlin, 2004.

\bibitem{LancienLeMerdy2013}
Florence Lancien and Christian~Le Merdy.
\newblock On functional calculus properties of ritt operators.
\newblock Preprint, available at arXiv: 1301.4875, 2013.

\bibitem{LeMerdy98}
Christian Le~Merdy.
\newblock The similarity problem for bounded analytic semigroups on {H}ilbert
  space.
\newblock {\em Semigroup Forum}, 56(2):205--224, 1998.

\bibitem{LeMerdy2003}
Christian Le~Merdy.
\newblock The {W}eiss conjecture for bounded analytic semigroups.
\newblock {\em J. London Math. Soc. (2)}, 67(3):715--738, 2003.

\bibitem{LeMerdy2014a}
Christian Le~Merdy.
\newblock {$H^\infty$} functional calculus and square function estimates for
  {R}itt operators.
\newblock {\em Rev. Mat. Iberoam.}, 30(4):1149--1190, 2014.

\bibitem{LeRoux}
Marie-No{{\"e}}lle Le~Roux.
\newblock Semidiscretization in time for parabolic problems.
\newblock {\em Math. Comp.}, 33(147):919--931, 1979.

\bibitem{LeVequeTrefethen84}
Randall~J. LeVeque and Lloyd~N. Trefethen.
\newblock On the resolvent condition in the {K}reiss matrix theorem.
\newblock {\em BIT}, 24(4):584--591, 1984.

\bibitem{LubichNevanlinna}
Christian Lubich and Olavi Nevanlinna.
\newblock On resolvent conditions and stability estimates.
\newblock {\em BIT}, 31(2):293--313, 1991.

\bibitem{Lyubich99}
Yu. Lyubich.
\newblock Spectral localization, power boundedness and invariant subspaces
  under {R}itt's type condition.
\newblock {\em Studia Math.}, 134(2):153--167, 1999.

\bibitem{McCarthySchwartz65}
C.~A. McCarthy and J.~Schwartz.
\newblock On the norm of a finite {B}oolean algebra of projections, and
  applications to theorems of {K}reiss and {M}orton.
\newblock {\em Comm. Pure Appl. Math.}, 18:191--201, 1965.

\bibitem{mcintoshHinf}
Alan McIntosh.
\newblock Operators which have an {$H_\infty$} functional calculus.
\newblock In {\em Miniconference on operator theory and partial differential
  equations ({N}orth {R}yde, 1986)}, volume~14 of {\em Proc. Centre Math. Anal.
  Austral. Nat. Univ.}, pages 210--231. Austral. Nat. Univ., Canberra, 1986.

\bibitem{NagyZemanek99}
B{{\'e}}la Nagy and Jaroslav Zem{{\'a}}nek.
\newblock A resolvent condition implying power boundedness.
\newblock {\em Studia Math.}, 134(2):143--151, 1999.

\bibitem{Nevanlinna93}
Olavi Nevanlinna.
\newblock {\em Convergence of iterations for linear equations}.
\newblock Lectures in Mathematics ETH Z{\"u}rich. Birkh{\"a}user Verlag, Basel,
  1993.

\bibitem{Nevanlinna94}
Olavi Nevanlinna.
\newblock On the growth of the resolvent operators for power bounded operators.
\newblock In {\em Linear operators ({W}arsaw, 1994)}, volume~38 of {\em Banach
  Center Publ.}, pages 247--264. Polish Acad. Sci., Warsaw, 1997.

\bibitem{Nikolski14}
N.~Nikolski.
\newblock Sublinear dimension growth in the {K}reiss matrix theorem.
\newblock {\em Algebra i Analiz}, 25(3):3--51, 2013.

\bibitem{Palencia93}
C.~Palencia.
\newblock A stability result for sectorial operators in {B}anach spaces.
\newblock {\em SIAM J. Numer. Anal.}, 30(5):1373--1384, 1993.

\bibitem{PolyaSzegoBook}
G.~P{{\'o}}lya and G.~Szeg{{\"o}}.
\newblock {\em Aufgaben und {L}ehrs{\"a}tze aus der {A}nalysis}, volume Band I.
\newblock Springer, Berlin, 1925.

\bibitem{Riesz1916}
Marcel Riesz.
\newblock \"{U}ber einen {S}atz des {H}errn {S}erge {B}ernstein.
\newblock {\em Acta Math.}, 40(1):337--347, 1916.

\bibitem{Ritt53}
R.~K. Ritt.
\newblock A condition that {$\lim_{n\to\infty}n^{-1}T^n=0$}.
\newblock {\em Proc. Amer. Math. Soc.}, 4:898--899, 1953.

\bibitem{Schwenninger15a}
Felix~L. Schwenninger.
\newblock On measuring unboundedness of the ${H}^\infty$-calculus for
  generators of analytic semigroups.
\newblock {\em Submitted}, 2015.

\bibitem{Spijker91}
M.~N. Spijker.
\newblock On a conjecture by {L}e{V}eque and {T}refethen related to the
  {K}reiss matrix theorem.
\newblock {\em BIT}, 31(3):551--555, 1991.

\bibitem{SpijkerTracognaWelfert03}
M.~N. Spijker, S.~Tracogna, and B.~D. Welfert.
\newblock About the sharpness of the stability estimates in the {K}reiss matrix
  theorem.
\newblock {\em Math. Comp.}, 72(242):697--713 (electronic), 2003.

\bibitem{TadmorLAA}
E.~Tadmor.
\newblock The resolvent condition and uniform power boundedness.
\newblock {\em Linear Algebra Appl.}, 80:250--252, 1986.

\bibitem{Tadmor2014private}
E.~Tadmor.
\newblock On {T}admor's paper ``{T}he resolvent condition and uniform
  boundedness", November 2014.
\newblock Private Communication.

\bibitem{TrefethenEmbree05}
Lloyd~N. Trefethen and Mark Embree.
\newblock {\em Spectra and pseudospectra}.
\newblock Princeton University Press, Princeton, NJ, 2005.

\bibitem{Vitse04}
Pascale Vitse.
\newblock Functional calculus under the {T}admor-{R}itt condition, and free
  interpolation by polynomials of a given degree.
\newblock {\em J. Funct. Anal.}, 210(1):43--72, 2004.

\bibitem{Vitse04Turndown}
Pascale Vitse.
\newblock The {R}iesz turndown collar theorem giving an asymptotic estimate of
  the powers of an operator under the {R}itt condition.
\newblock {\em Rend. Circ. Mat. Palermo (2)}, 53(2):283--312, 2004.

\bibitem{Vitse2005b}
Pascale Vitse.
\newblock A band limited and {B}esov class functional calculus for
  {T}admor-{R}itt operators.
\newblock {\em Arch. Math. (Basel)}, 85(4):374--385, 2005.

\bibitem{Vitse05}
Pascale Vitse.
\newblock A {B}esov class functional calculus for bounded holomorphic
  semigroups.
\newblock {\em J. Funct. Anal.}, 228(2):245--269, 2005.

\end{thebibliography}

\end{document}